\documentclass{amsart}

\newtheorem{theorem}{Theorem}[section]

\theoremstyle{definition}
\newtheorem{definition}[theorem]{Definition}
\newtheorem{ex}[theorem]{Example}

\theoremstyle{remark}
\newtheorem{remark}[theorem]{Remark}

\numberwithin{equation}{section}

\newtheorem {Theorem}{Theorem} 
\newtheorem {thm}[Theorem]{Theorem} 

\newtheorem {prop}[Theorem]{Proposition}
\newtheorem {cor}[Theorem]{Corollary}
\newtheorem {lem}[Theorem]{Lemma}

\newtheorem{alg}[Theorem]{Algorithm}

\usepackage{amssymb, amsfonts, amsmath}
\usepackage{amsthm}
\usepackage{hyperref} 
\usepackage{mathrsfs}

\newcommand{\sref}[1]{(\ref{#1})}

\newcommand{\bmat}{\left(\begin{array}}
\newcommand{\emat}{\end{array}\right)}
\newcommand{\beq}{\begin{equation}}
\newcommand{\eeq}{\end{equation}}

\newcommand{\Mod}[1]{\,(\mbox{\rm mod}~#1)}
\newcommand{\C}{\mathbb{C}}

\newcommand{\Q}{\mathbb{Q}}
\newcommand{\Z}{\mathbb{Z}}

\newcommand{\Aut}{\operatorname{Aut}}

\newcommand{\cont}{\operatorname{cont}}
\newcommand{\disc}{\operatorname{disc}}
\newcommand{\Div}{\operatorname{Div}}
\newcommand{\End}{\operatorname{End}}
\newcommand{\gen}{\operatorname{gen}}
\newcommand{\GL}{\operatorname{GL}}
\newcommand{\Hom}{\operatorname{Hom}}

\newcommand{\lcm}{\operatorname{lcm}}
\newcommand{\NS}{\operatorname{NS}}

\newcommand{\SL}{\operatorname{SL}}

\usepackage{calc}
\usepackage{accents}

\newcommand{\DistTo}{\xrightarrow{
   \,\smash{\raisebox{-0.65ex}{\ensuremath{\scriptstyle\sim}}}\,}}

 \newcommand{\charec}{\operatorname{char}}

      \newcommand{\odd}{\operatorname{odd}}
     \newcommand{\ev}{\operatorname{ev}}
     \newcommand{\adj}{\operatorname{adj}}
  
  \newcommand{\Gal}{\operatorname{Gal}}
    \newcommand{\rad}{\operatorname{rad}}

\newcommand{\sps}{\vspace{3pt}}   
\newcommand{\spm}{\vspace{6pt}}

\begin{document}

\title{Constructing Genus 2 Curves with Given Refined Humbert Invariants}

\author{Harun KIR}
\address{}
\curraddr{}
\email{harun.kir@ens-lyon.fr}
\thanks{The author was supported by the Agence
Nationale de la Recherche under grants ANR-22-PETQ-0008 (PQ-TLS)}

\subjclass[2010]{Primary}

\keywords{Elliptic curves with complex multiplication; product abelian surfaces; genus
2 curves; the refined Humbert invariant; Humbert surfaces; integral binary and ternary quadratic
forms and their representations.}

\date{}

\dedicatory{}

\begin{abstract}
In 1994, Kani introduced an algebraic version of the Humbert invariant, known as the \textit{refined Humbert invariant}. This invariant $q_C$ is a positive definite quadratic form attached to a smooth curve $C$ of genus $2$. It serves as a vital tool, as many geometric properties of $C$ are reflected in the arithmetic properties of $q_C$. When the Jacobian $J_C$ of a genus $2$ curve $C$ is isogenous to a product of an elliptic curve with complex multiplication, the forms $q_C$ have been completely classified recently. In this paper, building upon this classification, we present a constructive algorithm that produces $J_C$ and \textit{a divisorial representative} of a curve $C$ of genus $2$ such that its refined Humbert invariant $q_C$ is equivalent to a given integral ternary quadratic form.

\end{abstract}

\maketitle

\section{Introduction}

The study of (smooth) genus $2$ curves and the properties that distinguish them is an active and compelling area of research. Various approaches exist depending on the specific focus; following Kani, one particularly fruitful direction involves a special integral quadratic form $q_C$ intrinsically attached to a genus $2$ curve $C$. This form, known as the \textit{refined Humbert invariant} of $C$, serves as a powerful tool, translating geometric problems into arithmetic ones and providing deep insights into the nature of these curves.

Before detailing the contributions of this paper, let us highlight the utility of this invariant. The refined Humbert invariant allows for the description of genus $2$ curves with a prescribed group of automorphisms (see, e.g., \cite{ESCII, Automorphism, cas}) or with additional structures, such as having an elliptic subgroup of a prescribed degree (see, e.g., \cite{kani1994elliptic, ESCI, SubcoversofCurves}). Furthermore, these forms facilitate the understanding of CM points in the intersection of Humbert surfaces (see, e.g., \cite{harun_1, kir2024curvesPHD}). Finally, it is possible to determine the bad reductions of a genus $2$ curve solely by computing these forms; see \cite{GRV}.

The utility of the refined Humbert invariant extends well beyond these specific instances. It provides elegant solutions to compelling geometric problems concerning genus $2$ curves, as demonstrated in \cite{kani2014jacobians, MJ}. Additionally, its relevance extends to cryptographic contexts, as discussed in \cite{Eda_degree_analysis}.

The primary aim of this paper is to present an algorithm that constructs a genus $2$ curve $C$ such that its refined Humbert invariant $q_C$ is equivalent to (as a quadratic form) a given ternary quadratic form $q$ (possessing prescribed necessary properties). This yields a method for constructing explicit genus $2$ curves with specific features, exploiting the correspondence between the arithmetic properties of $q_C$ and the geometric properties of $C$.

Recall that Humbert \cite{Humbert} (1899) introduced an analytic invariant, now known simply as the \textit{Humbert invariant}, to classify abelian surfaces $A/\C$ with $\End(A)\neq \Z$. This invariant is an integer. The \textit{refined Humbert invariant} is its algebraic generalization introduced by Kani \cite{kani1994elliptic} in 1994. It is a positive definite quadratic form $q_{(A,\theta)}$ attached to a principally polarized abelian surface $(A,\theta)/K$ over an algebraically closed field $K$.

In the specific case where $(A, \theta)=(J_C,\theta_C)$ for a curve $C$ of genus $2$ (where $J_C$ is the Jacobian of $C$ and $\theta_C$ is the theta divisor), we denote the refined Humbert invariant by $q_C:=q_{(J_C,\theta_C)}$.

 The problem of determining which ternary quadratic forms $f$ are equivalent  to a refined Humbert invariant $q_{(A,\theta)}$ for some principally polarized abelian surface $(A,\theta)/K$ was completely solved in \cite{refhum} and \cite{harun_1}. We use the following terminology for such forms:
\begin{definition}
\label{d: definition_special_form}
     Let $f$ be a positive definite ternary quadratic form.
     If $f$ is equivalent to a refined Humbert invariant $q_{(A,\theta)}$, for some $(A,\theta)$, denoted by $f \sim q_{(A,\theta)}$, then we say that $f$ is a \textit{geometric form}.
\end{definition}

In this article, we provide an algorithm that, given a geometric form $f$, constructs a principally polarized abelian surface $(A, \theta)$ such that $f\sim q_{(A, \theta)}$. More precisely, we first
construct $A/\C$ in terms of lattices, as outlined in Proposition \ref{p:construction_A}, and then
identify $\theta$ on $A$ as the image of a divisor under an (explicit) isomorphism $\mathbf{D}$ defined in \sref{eq: isomorphism_NS(A)}. Thus, our construction of a divisorial representative of $\theta$ corresponds
to a triple containing two integers and one elliptic curve isogeny.

Drawing on the constructive nature of the existence proofs for geometric forms, we translate these theoretical results into an explicit algorithmic procedure. A key step in this procedure requires finding a prime number represented by a given positive definite primitive ternary quadratic form. While unconditional upper bounds for such primes exist, we assume the Generalized Riemann Hypothesis (GRH) to utilize a sharper bound (cf.\ Proposition \ref{prop: BW_existence_prime}). We emphasize, however, that the correctness of the algorithm's output does not depend on the GRH.

\begin{thm}[GRH]
\label{thm: main}
Assume GRH holds. For a given geometric form $f$, we construct $(A,\theta)/\C$ such that $f \sim q_{(A,\theta)}$ by following the steps of Algorithms \ref{m: method_1_primitive} and \ref{m: method_2_imprimitive}.
\end{thm}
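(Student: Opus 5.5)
The plan is to make the existence proofs of \cite{refhum} and \cite{harun_1} effective, and to do it in two cases: $f$ primitive (Algorithm \ref{m: method_1_primitive}) and $f$ imprimitive (Algorithm \ref{m: method_2_imprimitive}). The characterization of geometric forms says, roughly, that $f$ is geometric if and only if $f$ is positive definite, satisfies the local conditions of \cite{refhum}, and primitively represents a suitable auxiliary binary form. In the CM setting, this auxiliary binary form is attached to an order $R$ in an imaginary quadratic field. So the first step is to read off from $f$ the data it determines. These are its content, its discriminant, and hence the discriminant $D$ of the relevant order. This data fixes the isogeny type $E_1\times E_2$ of $A$, where the $E_i$ are CM elliptic curves whose lattices are proper ideals of the orders involved. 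Proposition \ref{p:construction_A} then produces $A/\C$ explicitly as a quotient of $\C^2$ by a lattice built from these ideals. The Néron--Severi group is described concretely via the isomorphism $\mathbf{D}$ of \sref{eq: isomorphism_NS(A)}, which identifies $\NS(A)$ with triples $(a,b,\phi)$: two integers and an isogeny $\phi\in\Hom(E_1,E_2)$. Under this identification the intersection form becomes the explicit quadratic form $(a,b,\phi)\mapsto 2ab-2\deg\phi$.

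The second step is to locate $\theta$. By Kani's definition, $q_{(A,\theta)}$ is the form $\Delta(D)=(D.\theta)^2-2D^2$ on $\NS(A)/\Z\theta$. Finding $\theta$ with $q_{(A,\theta)}\sim f$ therefore amounts to finding a class $\theta=\mathbf{D}(a,b,\phi)$ that meets three requirements. It must be primitive, it must have $\theta^2=2$ (that is, $ab-\deg\phi=1$), and its orthogonal-complement form must be equivalent to $f$, not just of the right genus. The construction I would follow mirrors the existence proof. First choose a prime $p$ primitively represented by $f$, with $p$ coprime to a finite set of bad primes determined by $D$ and the content. Proposition \ref{prop: BW_existence_prime} guarantees such a $p$ below an explicit bound under GRH, so a finite search finds it. Next, write $f$ in a basis whose first vector represents $p$. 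This splits $f$ as $p x^2+\dots$, and the remaining binary part determines an ideal class, hence the curves $E_1,E_2$ and an isogeny $\phi$ of degree tied to $p$. Then solve $ab-\deg\phi=1$, which is possible because of the coprimality arranged by the choice of $p$. Finally, check by computing Gram matrices that the resulting form on $\theta^{\perp}$ equals $f$ up to $\GL_3(\Z)$.

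For imprimitive $f=m f_0$, Algorithm \ref{m: method_2_imprimitive} reduces to the primitive case. The characterization in \cite{harun_1} restricts the possible content $m$, and I would realize the content by replacing one of the curves with an isogenous curve of the appropriate conductor, or equivalently by passing to a suborder. The rest of the argument repeats the primitive construction inside the modified $\NS(A)$.

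The main obstacle is the second step, namely proving that the triple $(a,b,\phi)$ built from $p$ gives a form equivalent to $f$ and not merely a form in the same genus. I would handle this by checking directly, through $\mathbf{D}$, that the explicit basis of $\theta^{\perp}$ has the same Gram matrix as $f$ in the adapted basis. Correctness of the output does not depend on GRH: GRH only bounds how long the search for $p$ runs, while the final Gram-matrix verification is unconditional.
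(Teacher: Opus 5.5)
\textbf{There is a genuine gap: your outline never produces $\theta$, and your imprimitive case rests on a misconception.} The overall shape is right: build $E\times E'$ from a binary form, take $\theta=\mathbf{D}(n,m,kh)$ with $nm-k^2\deg h=1$, and use GRH only to bound the prime search. What fails is the mechanism you sketch for choosing $\theta$.

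You take a prime $p$ represented by $f$ itself, split $f$ as $px^2+\cdots$, and then ``solve $ab-\deg\phi=1$''. Nothing in your outline connects a prime value of $f$ to a polarization. An arbitrary solution of $nm-k^2d=1$ only yields some class in $\gen(f_q)$ (Proposition~\ref{eventhetaclassification}(i)), not necessarily the class of $f$.

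The missing idea is to take $p$ represented by the \emph{reciprocal} form $F^B_f$.
\begin{itemize}
\item By the proof of Theorem 38 of \cite{dicksonsbook} (Algorithm~\ref{alg: produce_phi_p}), this forces $f$ to primitively represent a properly primitive binary form $\phi_p$ of discriminant $-16\kappa p$.
\item Theorem 22 of \cite{refhum} puts $\phi_p$ in the principal genus, so $\phi_p$ has type $\kappa p$ by Theorem 13 of \cite{MJ}. Algorithm~\ref{alg: produce_s=(n,m,k)} then extracts the specific $s=(n,m,k)\in P(\kappa p)$ with $q_s\sim\phi_p$.
\item Independently, Corollary 21 of \cite{refhum} together with Lemmas~\ref{lem: binary_form_represent_p_in_proof} and~\ref{lem: construction_q_E,E} re-chooses the curves within the genus. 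This guarantees that $\Hom(E,E')$ contains a primitive $h$ of degree $\kappa p$, which need not exist on the surface of Proposition~\ref{p:construction_A}.
\end{itemize}
Only then is $\theta=\mathbf{D}(n,m,kh)$ tied to $f$. Setting $z=0$ in Proposition~\ref{prop: ref_hum_computation} shows that $q_{(A,\theta)}$ primitively represents $q_s\sim\phi_p$. The proof of Theorem 1 of \cite{refhum} turns this shared subform, together with the genus information, into $f\sim q_{(A,\theta)}$. ``Checking the Gram matrices'' verifies an output but gives no reason the check succeeds. That is exactly the class-versus-genus obstacle you flag yourself.

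\textbf{Imprimitive case.} A geometric form of content $4$ is never $4$ times a geometric form. Indeed, $f/4$ has an odd cross coefficient, so it takes a value $\equiv 2$ or $3\pmod 4$; for example, $[1,3,7,0,1,1]$ represents $3$. By Proposition~\ref{eventhetaclassification}(ii), content $4$ means $\theta\in\mathcal{P}(A)^{\ev}$. This is a property of the polarization, and it cannot be produced by passing to an isogenous curve or a suborder. Such an $f$ also represents no prime, so your first step cannot even start.

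The paper handles this case separately:
\begin{itemize}
\item it starts from the form $q_{I_1,I_2}$ of Algorithm~\ref{alg: return_q_imprimitive_g} and a prime $p$ represented by $F_{f/2}=F^B_{f/4}$;
\item it takes a content-$2$ form $\phi_p$ represented by $f/2$ with $2\phi_p\sim q_s$ for some $s\in P(\kappa p)^{\ev}$ (Proposition 5.3 and Corollary 5.5 of \cite{harun_1});
\item it uses Proposition 5.2 of \cite{harun_1} to get a form in $\gen(q)$ representing $\kappa p$;
\item it relies on the proof of Theorem 1.1 of \cite{harun_1} for the final equivalence.
\end{itemize}
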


Algorithms \ref{m: method_1_primitive} and \ref{m: method_2_imprimitive} constitute the core results of this paper, providing the explicit construction of $(A,\theta)$ as established in Theorem \ref{thm: main}. The structure of these algorithms reflects the constructive proof that demonstrates a ternary quadratic form satisfying the conditions of \sref{eq: class_ref_condition_1}--\sref{classificationconditions2} is a geometric form.  Furthermore, we present a simplified version of the algorithm that yields identical outputs for the same input. By omitting some terminologies (such as the genus of a binary form) required for the proofs, this formulation is significantly simpler and more suitable for implementation, as we have done in SageMath \cite{SageMath}\footnote{\url{https://github.com/harunkir/Constructing-Genus-2-Curves-with-Given-Refined-Humbert-Invariants}}

Kani \cite{refhum} outlined an algorithm (Algorithm 27 in \cite{refhum}) for constructing such an $(A,\theta)$ when $f$ is primitive. We refine Kani's approach to provide a fully explicit formulation in that case. This refinement entails extensive work involving both binary and ternary integral quadratic forms.

\begin{remark}
The construction algorithm was originally presented in Chapter 4 of the author's PhD thesis. However, that version was significantly more complex due to certain algorithmic difficulties, specifically the requirement to produce a prime number represented by given ternary and binary quadratic forms. Fortunately, recent crypto work by Wesolowski \cite{BW_hard_problems} addresses these obstructions quite well. Incorporating these results has considerably simplified the algorithms presented here compared to those in the thesis. Additionally, the requirement to analyze the local behavior of imprimitive ternary forms, which previously required significant effort, is now handled using the results of \cite{harun_1}.
\end{remark}

 Since the classification of geometric forms relies heavily on the classical theory of ternary and binary quadratic forms such as the results found in Chapters II and III of Dickson \cite{dicksonsbook}, we explicitly outline several algorithms by adapting the constructive proofs from classical literature rather than simply citing them (see Algorithms \ref{alg: find_repr_assigned_value} and \ref{alg: produce_phi_p}, along with Remark \ref{rem: binary_rep_assigned_value}). Furthermore, as we utilize results from the unpublished article \cite{refhum} (specifically when $f$ is a primitive geometric form), we provide dual citations to both that paper and the author's PhD thesis for clarity and convenience.

Regarding the construction algorithm itself: for a given geometric form $f$, we first explicitly construct a degree map $q_{E,E'}$ on $\Hom(E,E')$ such that $f\sim q_{(E\times E', \theta)}$ for some principally polarized abelian surface $(E\times E',\theta)$ (see \S\ref{S: section_constructionA}). This construction relies on the close relationship between the local behavior (assigned characters) of the degree map and the reciprocal form of $f$. Then, for a given positive definite binary quadratic form $q$, we employ results from \cite{kani2011products} (or cf.\ \cite{refhum})  and give a construction of $E$ and $E'$ such that $q\sim q_{E, E'}$ (see Lemma \ref{lem: construction_q_E,E}).

The construction of the principal polarization $\theta$ is more involved than the construction of the underlying abelian surface $A=E\times E'$. In this step, we numerically produce the principal polarization $\theta$, which can be represented as a $2\times 2$ Hermitian matrix (in terms of the map $\mathbf{D}$ defined in \sref{eq: isomorphism_NS(A)}). As a verification step, the constructed $(A,\theta)$ can be validated by computing its refined Humbert invariant; see Proposition \ref{prop: ref_hum_computation}.

 In Section \ref{s: Examples_Kaplansky}, rather than applying our algorithms to a generic geometric form, we choose a special ternary form $f$ (such that $4f$ is a geometric form) from Kaplansky's list \cite{kaplansky}. This allows us to translate Kaplansky's conjecture regarding representations of the form $f$ into the geometric assertion that a CM point $(A,\theta)$, where $q_{(A,\theta)}\sim 4f$, lies on the intersection of infinitely many Humbert surfaces. Consequently, determining whether this CM point lies in the intersection provides a potential path to resolving the conjecture.

We also give several remarks in Section \ref{s: Examples_Kaplansky} regarding the relation of the geometric form $q_C$ and the automorphism group $\Aut(C)$ and the elliptic subcovers of $C$.

It is of significant interest to determine the equations of curves $C/\Q$ of genus $2$ corresponding to geometric forms $q_{C}$ (where such equations exist).  It might be possible to obtain such equations in \textit{all} cases by combining our algorithm with the (extended) method described in \S4 of \cite{gelin2019principally}. We intend to investigate this possibility in future work.

\section{Ternary and binary quadratic forms}
\label{s: ternary_binary_forms}

In this article, we mostly work on positive definite binary and ternary integral quadratic forms as was mentioned in the introduction. For this, we want to use  results of Brandt \cite{brandt1952mass}, \cite{brandt1951zahlentheorie}, Cox \cite{davidcox}, Dickson \cite{dicksonsbook}, Jones \cite{jones1950arithmetic}, Smith \cite{smith} and Watson \cite{watson1960integral}. It is necessary for our aim to identify the assigned characters of a ternary quadratic form and its reciprocal and to calculate their values, and so we briefly mention some notations and concepts in the sense of those authors.

As defined in Watson \cite{watson1960integral} (and in Brandt \cite{brandt1951zahlentheorie}), an \textit{integral quadratic form} with $n$-variables is a polynomial $f(x_1,\ldots, x_n) $ of the form 
 \begin{equation}
\label{quadraticform}
    f \;=\; f(x_1,\ldots, x_n) \;=\; \sum_{1\leq i\leq j\leq n}a_{ij}x_ix_j, \textrm{ where } a_{ij}\in \Z.
\end{equation} 
Note that an integral quadratic form $f$ as in \sref{quadraticform} may not be an integral form in the sense of Dickson \cite{dicksonsbook} and Smith \cite{smith} because they additionally assume that the \textit{non-diagonal coefficients} of $f$ are even, i.e., $2\mid a_{ij},$ for all $i<j.$

The \textit{content}, denoted by $\cont(f)$, of the form $f$ is the greatest common divisor of its coefficients $a_{ij}$. If $\cont(f)=1$, we say that it is a \textit{primitive} form; otherwise, it is called \textit{imprimitive}. For the older terminology in \cite{dicksonsbook}, \cite{smith}, $f$ is called \textit{properly primitive} if it is primitive (in the above sense) and if $2\mid a_{ij},$ for all $i<j.$ Also, $f$ is called 
\textit{improperly primitive} if $\frac{1}{2}f$ is primitive (in the above sense) and if there exists some pair $(i,j)$ with $i<j$ such that $\frac{1}{2}a_{ij}$ is odd.

As usual, we say that two integral quadratic forms $f_1$ and $f_2$ with $n$-variables on $\Z^n$  are $\GL_n(\Z)$-\textit{equivalent}, (simply say equivalent) if one can be transformed into the other by an integral change of basis, denoted by $f_1\sim f_2$. Recall also that we define the further equivalence definitions in a similar way: If $f_1$ and $f_2$ are forms over $\Z_p$ and they are $\GL_n(\Z_p)$-equivalent (as in the obvious sense), where $\Z_p$ is the ring of $p$-adic integers, then we say that they are $p$-\textit{adically equivalent}. This is denoted by $f_1\sim_p f_2.$  In addition, if $f_1$ and $f_2$ are forms over $\mathbb{R}$ and if they are $\GL_n(\mathbb{R})$-equivalent, then we use the symbol  $f_1\sim_{\infty}f_2.$

Let $f_1$ and $f_2$ be two integral quadratic forms. If $f_1\sim_pf_2$ for all primes $p$ including $p=\infty,$ then we say that $f_1$ and $f_2$ are 
\textit{genus-equivalent} (or \textit{semi-equivalent}), denoted by $f_1 \simeq f_2$; cf.\ \cite[p.~106]{jones1950arithmetic}. Note that this is equivalent to the definition which was given in Watson \cite[p.~72]{watson1960integral}; cf.\ Theorem 40 of \cite{jones1950arithmetic} (also see Theorem 50 of \cite{watson1960integral}). In this case, we say that they are in the same \textit{genus}. Let $\gen(f)$ denote the set of equivalence classes of integral quadratic forms $f'$ which are genus-equivalent to a given integral quadratic form $f.$

Note that if $f_1$ and $f_2$ are positive definite properly primitive integral ternary forms, then they are genus-equivalent (in the above sense) if and only if they are genus-equivalent in Smith's sense \cite{smith} by Theorem 40 of \cite{jones1950arithmetic} and \S12 of \cite{smith}. Therefore, we study certain techniques for deciding when two positive definite integral ternary forms $f_1$ and $f_2$ are genus-equivalent by using the method of Smith \cite{smith} in order to prove our results and write our algorithms. This method was improved in \cite{brandt1952mass}, \cite{brandt1951zahlentheorie} by shortening the case distinctions. To avoid many case distinctions, we make use of this improvement. For this aim, we introduce some terminology for quadratic forms here.

Throughout the article, we usually talk about the integral quadratic forms, and we often drop the word integral when it is clear in the context.

By $\disc(f)$ and $\det(f),$ we mean the \textit{discriminant} and the \textit{determinant} of a quadratic form $f$ respectively. In particular, let $f_1=[a,b,c,r,s,t]$ denote the ternary quadratic form $ax^2+by^2+cz^2+ryz+sxz+txy$ and let $f_2=[a,b,c]$ denote the binary quadratic form $ax^2+bxy + cy^2$.
Then we have that \begin{equation}
    \label{eq: disc_formula_ternary_form}
    \disc(f_1) \;=\; -4abc \;-\; rst \;+\; ar^2 \;+\; bs^2 \;+\; ct^2 \quad \text{and} \quad \disc(f_2) \;=\; b^2 \;-4ac.
\end{equation}

If $f$ is properly or improperly primitive, then it has a \textit{reciprocal} quadratic form $F_f$ in Smith's sense as defined on \cite[p.~455]{smith} (or on \cite[p.~7]{dicksonsbook}). If $f$ is a primitive form, then it has \textit{reciprocal} as defined by Brandt \cite[p.~336]{brandt1951zahlentheorie}; this is denoted by $F_f^B.$ We will discuss the relations of these reciprocals when $f$ is an improperly ternary form; see Proposition \ref{F=F^B} below. 

To formulate these concepts explicitly, let $f=[a,b,c,r,s,t]$  be a positive definite, primitive integral ternary form. Let 
$$
M(f)\;=\;\begin{pmatrix}
  2a & t & s  \\
  t & 2b & r\\
   s & r & 2c
\end{pmatrix}
$$ 
denote the \textit{coefficient matrix} of $f$. Then the \textit{adjoint} $\adj(f)$ form of $f$ is defined by the formula as in \cite[p.~25]{watson1960integral}: 
 \begin{equation}
 \label{eq: adjoint_formula}
       M(\adj(f)) \;=\; -2\adj(M(f)),
   \end{equation} 
   where $\adj(M(f))$ is the usual adjoint of the matrix $M(f)$, i.e., the transpose of the cofactor matrix of $M(f)$.  The form $f$ form has a \textit{reciprocal} quadratic form defined in terms of the adjoint form $\adj(f)$ of $f$ as follows:
\begin{equation}
\label{eq: reciprocal_definition}
    \adj(f) \;=\; I_1(f) F_f^B,
\end{equation}
where    $F_f^B$  is a positive definite primitive form and   $I_1(f) \in \Z$.   We also define another quantity as
 $$
 I_2(f) \;:=\; I_1(F_f^B).
 $$
The quantites $I_1$ and $I_2$ are called  the \textit{basic (genus) invariants} of $f$ as in Brandt \cite[p.~316]{brandt1952mass}, \cite[p.~336]{brandt1951zahlentheorie}. 
 Smith also defined the \textit{basic (genus) invariants} $\Omega_f$ and $\Delta_f$ of $f$ as in \cite[p.~455]{smith} (and also see \cite[p.~7]{dicksonsbook}). Instead of defining the $\Omega$ and $\Delta$ , we discuss the relations of these quantities with the $I_1$ and $I_2$; see Proposition \ref{F=F^B} below.

By the identity on p.~316 of \cite{brandt1952mass}, we have for a primitive positive definite ternary form $f$ the useful formula that
 \begin{equation}
    \label{discf}
    \disc(f) \;=\; \frac{I_1(f)^2I_2(f)}{16}.
\end{equation}

We first derive from \cite{brandt1952mass} their relations here, and later,  we calculate them for the forms we studied on; cf.\ Proposition \ref{imprimitiveform} and \sref{eq: I_1(f_q)} below. 

\begin{prop} 
\label{F=F^B}
Let $f$ be a positive definite improperly primitive ternary form. Then $F_f$ is properly primitive, and $F_f=F_{f/2}^B.$ Moreover, we have that
\begin{align}
\label{I1odd}
     I_1(f/2)& \   \text{ is negative odd,  and } \quad 16\mid I_2(f/2), \\
         \label{I1Omega}
I_1(f/2)& \;=\; -\Omega_f \quad \text{ and } \quad I_2(f/2) \;=\; -8\Delta_f.
\end{align}
\end{prop}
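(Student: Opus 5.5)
The plan is to make the whole statement a direct matrix computation. Write $f=2g$ with $g=[a,b,c,r,s,t]$ primitive in our sense, with at least one of $r,s,t$ odd (this is what improperly primitive means). In Smith's convention $f=x^{T}Bx$ with $B$ symmetric, and since $f=2g$ we get $B=M(g)$. Smith's adjoint of $f$ is then the form with symmetric matrix $\adj(M(g))$, up to Smith's overall sign convention, which I will fix against \cite[p.~7]{dicksonsbook}. Smith sets $\Omega_f$ to be the gcd of the entries of $\adj(M(g))$ and $F_f$ to be the form with symmetric matrix $\adj(M(g))/\Omega_f$. Its coefficient matrix in the sense of $M(\cdot)$ is therefore $M(F_f)=2\adj(M(g))/\Omega_f$. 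Comparing with \sref{eq: adjoint_formula}, $M(\adj(g))=-2\adj(M(g))=-\Omega_f\,M(F_f)$, so
\[
\adj(g) \;=\; -\Omega_f\,F_f .
\]
Once $F_f$ is shown to be primitive and positive definite, uniqueness in \sref{eq: reciprocal_definition} gives $F_f=F^B_{g}$ and $I_1(g)=-\Omega_f$.

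The key step, and the main obstacle, is showing that $\Omega_f$ is odd and that $F_f$ is properly primitive; this is the classical fact that the reciprocal of an improperly primitive form is properly primitive. I would argue modulo $2$. Since the diagonal of $M(g)$ is $2a,2b,2c$, the matrix $M(g)$ is alternating mod $2$. It is nonzero because some off-diagonal entry is odd, so it has rank exactly $2$ (a $3\times 3$ alternating matrix has even rank). Hence its adjugate mod $2$ has rank $1$, and it equals $vv^{T}$ for a nonzero kernel vector $v$ (explicitly $v\equiv(r,s,t)$). Consequently:
\begin{itemize}
\item not every cofactor is even, so $\Omega_f$ is odd;
\item the diagonal of $\adj(M(g))/\Omega_f$ is $\equiv(v_1^2,v_2^2,v_3^2)\not\equiv 0\pmod 2$.
\end{itemize}
The coefficients of $F_f$ as a form are the diagonal entries $B'_{ii}$ together with $2B'_{ij}$, where $B'=\adj(M(g))/\Omega_f$ has gcd $1$. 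An odd diagonal entry then forces the content of $F_f$ to be $1$. So $F_f$ is primitive, and since its off-diagonal coefficients are even it is properly primitive. Positive definiteness follows because the adjugate of a positive definite matrix is positive definite, with $\Omega_f>0$. In particular $I_1(g)=-\Omega_f$ is a negative odd integer.

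For $I_2$ I would use \sref{discf}. From \sref{eq: disc_formula_ternary_form} one reads off $\disc(g)=-\tfrac12\det M(g)$, so
\[
I_2(g) \;=\; \frac{16\,\disc(g)}{I_1(g)^2} \;=\; \frac{-8\det M(g)}{\Omega_f^{2}} .
\]
Smith's invariant is $\Delta_f=D/\Omega_f^2$, with $D=\det B=\det M(g)$ in the sign convention of \cite{smith}, \cite{dicksonsbook}; I would check this sign carefully. This gives $I_2(g)=-8\Delta_f$. Finally, $\det M(g)=8abc+2rst-2(ar^2+bs^2+ct^2)$ is even and $\Omega_f$ is odd, so $\Delta_f$ is even and $16\mid I_2(g)$, which finishes \sref{I1odd} and \sref{I1Omega}. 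The only delicate bookkeeping is matching Smith's sign conventions for $D$ and for the adjoint with Brandt's; I would verify these on a single example, say $f=2[1,1,1,1,1,1]$.
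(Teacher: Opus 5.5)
Your proof is correct, but it takes a genuinely different route from the paper.

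The paper's proof is a pure citation. It observes that $f/2$ falls under Case I on p.~316 of \cite{brandt1952mass}, and reads off from Brandt's Type I relations:
\begin{itemize}
\item that $F_f=F^B_{f/2}$ is properly primitive;
\item that $I_1(f/2)$ is negative odd and $16\mid I_2(f/2)$;
\item that $I_1(f/2)=-\Omega_f$ and $I_2(f/2)=-8\Delta_f$.
\end{itemize}

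You instead derive everything from the definitions, in two steps. First, writing Smith's reciprocal as $\adj(M(g))/\Omega_f$ gives $\adj(g)=-\Omega_fF_f$ directly from \sref{eq: adjoint_formula}. This uses Dickson's cofactor convention with $D=\det M(g)$, which is the convention that makes $\Omega_{f/2}$ and $\Delta_{f/2}$ positive, as the paper later requires in the imprimitive case of \S\ref{S: section_constructionA}. Second, $M(g)$ is a nonzero alternating matrix of rank $2$ modulo $2$, so $\adj(M(g))\equiv vv^{T}$ with $v=(r,s,t)$. This one observation explains both why $\Omega_f$ is odd and why the reciprocal has an odd diagonal coefficient.

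Your route buys transparency. It pins down the sign conventions that the paper leaves buried in the citation, and it shows why the statement holds rather than deferring to Brandt's case table. It is also easy to check against Example~\ref{ex: kaplansky_example}: there $\adj(M(f/4))$ yields exactly $[84,27,11,2,-12,-28]$, with diagonal parities $(0,1,1)=(r,s,t)$. The paper's route buys only brevity.

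One small streamlining is possible for $16\mid I_2(g)$. Your argument goes through the evenness of $\Delta_f$, which tacitly uses $\Omega_f^2\mid\det M(g)$, i.e.\ the integrality of Smith's $\Delta$. You can avoid this. By \sref{discf}, $\Omega_f^2\,I_2(g)=16\disc(g)$, where $I_2(g)=I_1(F^B_g)\in\Z$ and $\Omega_f$ is odd. Hence $16\mid I_2(g)$ immediately, and then $\Delta_f=-I_2(g)/8$ is automatically an even integer.
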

\begin{proof}
Since $f$ is an improperly primitive ternary form, the primitive form $f/2$ satisfies the conditions of Case I of \cite[p.~316]{brandt1952mass}, and so $F_{f/2}^B=F_{f}$ is properly primitive and $I_1(f/2)$ is negative odd and $16\mid I_2(f/2)$ from the relations in \cite{brandt1952mass},  loc. cit. Moreover, we get from the relations of Type I of \cite{brandt1952mass}, loc. cit., that $I_1(f/2)=-\Omega_f \text{ and } I_2(f/2)=-8\Delta_f.$ Hence, all the assertions have been verified.
\end{proof}

Let us put $ \chi_{\ell}(x) = \left(\frac{x}{\ell}\right)$, for $x$  prime to  $\ell$,  where $\ell$  is an odd prime (where $(\frac{.}{.})$ is the Legendre-Jacobi symbol). If $x$ is odd, let us define $\chi_{-4}(x) = \left(\frac{-4}{x}\right) = (-1)^{(x-1)/2}$ and $\chi_8(x)=\left(\frac{8}{x}\right)=(-1)^{(x^2-1)/8}$. Given an integer $\delta,$ let 
$$
X(\delta) \;=\; \{\chi_{\ell}:\ell\textrm{ odd prime with }\ell\mid\delta\} \;\cup\; \{ \chi_{-4},\chi_{8}, \chi_{-4}\chi_{8}\}
$$
be the indicated set of  characters. If $f$ is a primitive ternary form or  $f$ is an improperly primitive ternary form and if $\chi_{n}\in X(\delta),$ for some $\delta,$ then $\chi_n$ is called an \textit{assigned character} of $f$  if the following relation holds:  
\begin{equation}
\label{assignedcharactersdefinition}
    \chi_{n}(r_1) \;=\; \chi_{n}(r_2), \text{ for any } r_1,r_2\text{ represented by }f\text{ with }\gcd(r_i,n)=1.
\end{equation} 
This common value is denoted by $\chi_{n}(f).$ 

Let $X(f)$ denote the set of the assigned characters of the primitive ternary form $f$  with the basic invariants $I_1$ and $I_2$, and let $F:=F_f^B$. If we let $X^*(\delta)=X(\delta)\setminus\{\chi_8,\chi_{-4}\chi_{8}\}$, then we see by Brandt \cite{brandt1951zahlentheorie}, \S19, that
\begin{align}
\label{eq1: set_assigned_char_brandtI1}
    &X(f) \;=\; X(I_1) \   \text{ if }\  32 \mid I_1, \text{ and }  X(f) \;=\; X^*(I_1) \ \text{ if } \ I_1 \equiv 16\Mod{32},\\
    \label{eq2: set_assigned_char_brandtI2}
     &X(F) \;=\; X(I_2)\  \text{ if } \  32\mid I_2, \text{ and }  X(F) \;=\; X^*(I_2)\  \text{ if } \  I_2 \equiv 16\Mod{32}.
\end{align}


Recall from \cite[p. 55]{davidcox} that we determine the set $X(q)$ of the assigned characters of a primitive binary quadratic form $q$ from the primes dividing its discriminant.

 If $q$ is a positive definite integral quadratic form, then let 
 \begin{equation}
 \label{eq: R(q)_notation}
     R(q) \;=\; \{q(x)>0:x\in \Z^n\}
 \end{equation}
 denote the set of positive numbers represented by $q$. In particular, if $x\in R(q)$ and $x$ is primitive, i.e., the gcd of the entries in the vector $x$ is $1$, then  we say that  $q$ \textit{represents primitively} $x$, and we write $q\rightarrow x$.

\section{The refined Humbert invariant}

\label{refinedhumbert}
In this section, we recall the concept of the refined Humbert invariant which was introduced by Kani \cite{kani1994elliptic}. We also discuss many useful 
 properties of the refined Humbert invariant as derived in \cite{kani2014jacobians}, \cite{MJ}, \cite{harun_1} and \cite{refhum}. This invariant is a quadratic form that is intrinsically attached to a principally polarized abelian surface.

Let $A/K$ be an abelian surface over an algebraically closed field $K$ of $\charec K = 0$. If $\lambda: A\stackrel\sim\rightarrow\hat{A}$ is a principal polarization of $A$, then $\lambda=\phi_{\theta}$ for a (unique) $\theta$ in the Neron-Severi group $\NS(A)=\Div(A)/\equiv,$ where $\equiv$ denotes numerical equivalence; (see \cite[p.~60]{mumford1970abelian} for the discussion of $\phi$). 
Thus, we can see the set of the principal polarizations of $A$ as a subset of $\NS(A)$. 

Let $\mathcal{P}(A) \subset \NS(A)$ denote the set of principal polarizations on $A$, and let $\theta\in \mathcal{P}(A)$. The quadratic form
$\tilde q_{(A,\theta)}$ of a principally polarized abelian surface $(A,\theta)$ on $\NS(A)$ is defined by the formula
\begin{equation}
\label{eq:qth} 
\tilde q_{(A,\theta)}(D) \;=\; (D.\theta)^2 - 2(D.D),\quad\mbox{for } D\in \NS(A),   
\end{equation}
where $(.)$ denotes the intersection number of divisors. By \cite[p.~200]{kani1994elliptic}, the form $\tilde q_{(A,\theta)}$ induces a positive definite quadratic form $q_{(A,\theta)}$ on the quotient module $\NS(A,\theta) = \NS(A)/\Z\theta$.

The quadratic module  $(\NS(A,\theta), q_{(A,\theta)})$ is called the \emph{refined Humbert invariant} of $(A,\theta)$; as defined in \cite{MJ}, \cite{kani2014jacobians}.  Thus the refined Humbert invariant $q_{(A,\theta)}$ gives rise to an equivalence class of integral quadratic forms in $\rho-1$ variables since $\NS(A,\theta)\simeq\mathbb{Z}^{\rho-1}$, where $\rho$ is the Picard number of $A$. 

 When  $A/K$ is a \textit{CM (abelian) product surface}, i.e., $A\simeq E_1\times E_2$ for some CM elliptic curves $E_1/K, E_2/K$, where $E_1$ and $E_2$ are isogenous, we have that $q:=q_{(A,\theta)}$ is a ternary quadratic form because the Picard number $\rho(A)=4$  in this case. We have also the converse of this, let us state it together with other useful results.

\begin{prop}
\label{imprimitiveform}
 (i) If $q_{(A,\theta)}$ is a ternary form, then $A\simeq E_1\times E_2$ is a CM product surface. Moreover, we have that 
 \begin{equation}
 \label{determinantoftherefined}
 \disc(q_{(A,\theta)}) \;=\; 16\disc(q_{E_1,E_2}),
 \end{equation}

 (ii) If $f:=q_{(A,\theta)}$ is a primitive ternary form, then $\Omega_f$ is even.

 (iii) If $f:=q_{(A,\theta)}$ is an imprimitive form, then $\Omega_{f/2}=\cont(q_{E_1,E_2})$
 \end{prop}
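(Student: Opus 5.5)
The plan is to reduce everything to the explicit description of the refined Humbert invariant of a product surface in terms of the degree form $q_{E_1,E_2}$ on $\Hom(E_1,E_2)$, as in Kani's work \cite{kani1994elliptic, refhum} and \cite{harun_1}.

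\textbf{Part (i).} Since $q_{(A,\theta)}$ has $\rho(A)-1$ variables, being ternary means $\rho(A)=4$. For abelian surfaces in characteristic $0$, $\rho\le 4$, and equality holds precisely when $A$ is isogenous to $E\times E$ with $E$ a CM curve. A theorem of Shioda–Mitani then gives $A\simeq E_1\times E_2$ with $E_1,E_2$ isogenous CM curves.

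For the discriminant, use the standard decomposition
\[
\NS(E_1\times E_2)\simeq \Z\oplus\Z\oplus\Hom(E_1,E_2).
\]
Here the first two summands are the classes of $\{0\}\times E_2$ and $E_1\times\{0\}$, and the self-intersection form is $2ab-2\deg(h)$. This lattice has discriminant $-\disc(q_{E_1,E_2})$ up to a fixed sign and power of $2$. Next, relate the discriminant of $q_{(A,\theta)}$ on $\NS/\Z\theta$ to that of $(\NS,\cdot)$. Since $(\theta.\theta)=2$, the form $\tilde q(D)=(D.\theta)^2-2(D.D)$ restricted to $\theta^\perp$ is $-2(D.D)$. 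Moreover, $\NS/\Z\theta$ contains the image of $\theta^\perp$ with index $2$ (or $1$). Tracking these factors of $2$ and the index gives $\disc(q_{(A,\theta)})=16\disc(q_{E_1,E_2})$. Alternatively, cite Kani's formula $\det$ relation directly from \cite{kani1994elliptic}.

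\textbf{Parts (ii) and (iii).} These need Smith's invariants, so I would work through Proposition \ref{F=F^B} and the identity \sref{discf}.

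For (ii), the form $q_C$ satisfies $q\equiv 0,1\pmod 4$ on all values, as is known from \cite{kani1994elliptic}. This congruence constraint forces the adjoint's content $I_1$ to be divisible by a power of $2$, and translating via Brandt's relations shows $\Omega_f$ is even.

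For (iii), the key fact to invoke from \cite{kani2014jacobians} or \cite{refhum} is that in the imprimitive case $f=4\,q'$. Here $q'$ is built from $q_{E_1,E_2}$ by adjoining a coordinate, so that $f/2$ is improperly primitive or has an explicit shape such as $x^2$-free. Computing $\adj(f/2)$ explicitly then shows its content equals $\cont(q_{E_1,E_2})$ times a fixed constant. Applying \sref{I1Omega} gives $\Omega_{f/2}=\cont(q_{E_1,E_2})$.

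\textbf{Main obstacle.} The hardest step is part (iii): pinning down the normal form of imprimitive $q_{(A,\theta)}$ and computing the adjoint's content exactly. I would first try using the explicit formula for $q_{(A,\theta)}$ from \cite{harun_1} and then apply \sref{eq: adjoint_formula} directly.
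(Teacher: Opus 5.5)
Your parts (i) and (ii) work as self-contained replacements for what the paper only cites: Proposition 3.2 of \cite{harun_1} with \cite{ESCI} for (i), and Brandt's relations on p.~316 of \cite{brandt1952mass} for (ii). In (i) the factors of $2$ do close up. If $(\NS(A).\theta)=d\Z$ with $d\in\{1,2\}$, then $\det(\theta^\perp)=2\det(\NS(A))/d^2$ and the image of $\theta^\perp$ has index $2/d$ in $\NS(A,\theta)$. So $d$ cancels, and $\disc(q_{(A,\theta)})=16\disc(q_{E_1,E_2})$ holds for both parities of $\theta$. In (ii), saying that $I_1$ is ``divisible by a power of $2$'' is too weak, since $4\mid I_1$ for every properly primitive form. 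The precise version is this: $f(v)\equiv 0,1\Mod{4}$ for all $v$ forces $f\sim x^2+4g$ with $g$ integral. Then every $2\times 2$ minor of the half-coefficient matrix is even, so $\Omega_f$ is even.

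Part (iii) has a genuine gap. The structural fact you invoke is not available. The ``$q_{E_1,E_2}$ plus an adjoined coordinate'' picture is the odd-$\theta$ statement $f\in\gen(x^2+4q_{E_1,E_2})$, and no imprimitive form lies in that genus. For instance, the paper's $f/4=[1,3,7,0,1,1]$ has $\disc(f/4)=-74\equiv 2\Mod{4}$, so it is not even $x^2$ plus a binary form. Moreover, the claim that the content is $\cont(q_{E_1,E_2})$ times a fixed constant is exactly what must be proved, and nothing in your sketch controls the power of $2$.

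The missing input is the parity constraint coming from $\theta$ being even (Proposition \ref{eventhetaclassification}(ii)).
\begin{itemize}
\item Write $\theta=\mathbf{D}(n,m,kh)$ with $h$ cyclic, and $q_{E_1,E_2}(xh+yh')=[a,b,c]$.
\item Pairing $\theta$ with $\mathbf{D}(1,0,0)$, $\mathbf{D}(0,1,0)$ and $\mathbf{D}(0,0,h')$ gives $m$, $n$ and $\pm kb$, so all three are even.
\item Hence $k^2a=nm-1$ is odd. So $k$, $a=\deg h$ and $\kappa=\cont(q_{E_1,E_2})$ are odd, and $b$ is even.
\end{itemize}

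With this, your fallback computation goes through. By Proposition \ref{prop: ref_hum_computation}, $f/4$ is integral. Using $nm-k^2a=1$, the cofactors of $M(f/4)$ collapse to $C_{33}=a$, $C_{22}=n^2c$, $C_{23}=bn/2$, $C_{13}=-abkm/2$, $C_{11}=m^2((nm+3)ac-b^2)$ and $C_{12}=k(b^2/2-ac(nm+2))$. Since $a$ is odd and prime to $nm$, the content of $\adj(f/4)$ is $\gcd(a,b,c)=\kappa$. Then \sref{I1Omega} gives $\Omega_{f/2}=\kappa$.

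There is also a shorter route. For odd $p$, Proposition \ref{eventhetaclassification}(i) gives $f\sim_p x^2+4q_{E_1,E_2}$, which determines the odd part of $\Omega_{f/2}$ as the odd part of $\kappa$. Then $\Omega_{f/2}$ is odd by Proposition \ref{F=F^B}, and $\kappa$ is odd by the parity argument above.
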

 \begin{proof}
     The assertions (i) and (iii) follow from Proposition 3.2 and Equation 4.2 of \cite{harun_1} respectively, and the assertion (ii) follows from \cite[p.316]{brandt1952mass}. Note that (\ref{determinantoftherefined}) originally comes from \cite{ESCI}.
 \end{proof}

 Recall from the introduction, we call a ternary refined Humbert invariant as a geometric form. As was mentioned in the introduction, we have a complete classification of the geometric forms. To state this, let us consider positive definite ternary quadratic forms $f(x,y,z)$ satisfying the following  conditions: If $f$ is primitive, then
\begin{align}
      \label{eq: class_ref_condition_1}
    &f(x,y,z)  \;\equiv\; 0,1 \Mod{4}, \ \text{ for all } \ x,y,z\in\Z,\\
    \label{eq: class_ref_condition_2}
    &f(x_0,y_0,z_0) \;=\; n^2 \ \textrm{ for some } \ x_0,y_0,z_0,n\in\Z \textrm{ with } \gcd(n,\disc(f))\;=\;1. 
\end{align}
If $f$ is imprimitive, then
\begin{align}
\label{classificationconditions1}
  \frac{1}{2}f& \text{ is an improperly primitive form, } \\
 f(x_0,y_0,z_0)& \;=\; (2n)^2 \  \textrm{ for some } \ x_0,y_0,z_0,n\in\Z \textrm{ with } \gcd(n,\disc(f))\;=\;1. \label{classificationconditions2}
\end{align}  We have the following classification result from \cite{harun_1, refhum} (cf.\ Theorem 4.1.1 of \cite{kir2024curvesPHD}):
\begin{thm}[Theorem 1 of \cite{refhum} and Theorem 1.2 of \cite{harun_1}]
\label{thm: classification_thm}
       Let $f$ be a positive definite ternary quadratic form. Then we have that if $f$ is primitive, then
       $$
      f \  \text{ is a geometric form } \ \Leftrightarrow \ f \ \text{ satisfies } \ \sref{eq: class_ref_condition_1} \ \text{ and } \  \sref{eq: class_ref_condition_2},
       $$
if $f$ is imprimitive, then
       $$
      f \  \text{ is a geometric form } \ \Leftrightarrow \ f \ \text{ satisfies } \ \sref{classificationconditions1} \ \text{ and } \  \sref{classificationconditions2}.
       $$
\end{thm}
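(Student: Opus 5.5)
Both directions are proved separately, and in each direction I split into the primitive and the imprimitive case.

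\emph{Necessity.} Suppose $f\sim q_{(A,\theta)}$. By Proposition \ref{imprimitiveform}(i), $A\simeq E_1\times E_2$ is a CM product surface. I will use the explicit description of $\NS(E_1\times E_2)\simeq \Z\oplus\Z\oplus\Hom(E_1,E_2)$, with quadratic form $(D.D)/2 = ab-\deg(h)$ for $D\leftrightarrow(a,b,h)$. Writing $\theta=(a_0,b_0,h_0)$ with $a_0b_0-\deg h_0=1$, a direct computation of $(D.\theta)^2-2(D.D)$ gives the following. For $D=(a,b,h)$ one has $(D.\theta)=ab_0+a_0b-\mathrm{tr}(h\hat h_0)$, so
\[
\tilde q(D)\equiv (D.\theta)^2 \pmod 4 .
\]
Hence every value of $q_{(A,\theta)}$ is congruent to $0$ or $1 \pmod 4$, which is \sref{eq: class_ref_condition_1}. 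The square value in \sref{eq: class_ref_condition_2} comes from the class of an elliptic subgroup or the fibre $E_1\times 0$, whose $\tilde q$-value is $(D.\theta)^2$ since $(D.D)=0$. The coprimality with $\disc(f)$ requires choosing a suitable divisor $D=(1,n^2\cdot\ast,\ldots)$ with $(D.D)=0$ and $(D.\theta)=n$ prime to $\disc$. I would obtain this by a density argument over the primitive representations of the binary form $q_{E_1,E_2}$.

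For imprimitive $f$, Proposition \ref{imprimitiveform}(iii) and \ref{F=F^B} show that $f/2$ has odd $\Omega$, i.e.\ that it is improperly primitive. The same computation, with $2\mid (D.\theta)$ forced, then gives \sref{classificationconditions2}.

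\emph{Sufficiency.} This is the constructive heart and the main obstacle. Given $f$ satisfying the conditions, I would proceed as follows.

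First, use \sref{eq: class_ref_condition_2} to find a vector $v$ with $f(v)=n^2$, and after a change of basis, one with $f(v)=1$ (or $4$ in the imprimitive case). I expect this reduction to need a Kani-type argument: the coprimality lets one replace $(A,\theta)$ by an isogenous principally polarized surface.

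Second, from $f$ and its reciprocal $F_f^B$, build a binary form $q$ with $\disc(f)=16\disc(q)$, matching \sref{determinantoftherefined}, whose assigned characters match those of $F_f^B$ via \sref{eq1: set_assigned_char_brandtI1}--\sref{eq2: set_assigned_char_brandtI2}. Then realize $q\sim q_{E_1,E_2}$ via Kani's theorem on products of CM curves.

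Third, compare the genus of $f$ with the ternary form $q_{(E_1\times E_2,\theta)}$ for some principal polarization $\theta$. The form $q_{(E_1\times E_2,\theta)}$ restricted to $\theta^{\perp}$ is essentially $x^2\oplus 4q$-like. The hardest step is showing that \emph{every} class in the genus, not just one, is attained by varying $\theta\in\mathcal P(A)$ and $(E_1,E_2)$ within the isogeny class. I would try to handle this by exhibiting the automorphism/polarization action as transitive on the spinor genus, and then using that positive definite ternary genera with these invariants have spinor genus equal to class after a prime-representation trick. That trick requires a prime represented by $F_f$, which is the only place GRH-type input enters.
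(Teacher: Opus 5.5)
Your necessity computation is sound in outline: $\tilde q(D)\equiv (D.\theta)^2 \pmod 4$, and $\tilde q(D)=(D.\theta)^2$ on isotropic classes. The sufficiency direction, however, has a genuine gap at the step you yourself flag as hardest. The claim that these ternary genera have ``spinor genus equal to class after a prime-representation trick'' is false in general. The number of spinor genera in a genus is a power of $2$ controlled by the primes dividing $\disc(f)$, while the class number of $\gen(x^2+4q)$ is unbounded as $|\disc(q)|$ grows. No auxiliary prime changes the class structure of a fixed genus, and nothing makes an ``automorphism/polarization action'' transitive on a spinor genus.

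The missing idea is to realize one specific binary subform of $f$. The paper takes the theorem from \cite{refhum} and \cite{harun_1}; this idea underlies Theorem~\ref{thm: Theorem_2_ref_hum} and is visible in the proof of Theorem~\ref{thm: construction_A_theta}:
\begin{enumerate}
\item Take a prime $p=F_f^B(v)$ with $p\nmid\disc(f)$.
\item By Dickson's correspondence (Algorithm~\ref{alg: produce_phi_p}), $f$ primitively represents a form $\phi_p$ of discriminant $-16\kappa p$.
\item $\phi_p$ lies in the principal genus (Theorem 22 of \cite{refhum}), so by \sref{eq: Theorem_13_MJ} $\phi_p\sim q_s$ with $s=(n,m,k)\in P(\kappa p)$.
\item Choose $E'$ with $q_{E,E'}\in\gen(q)$ representing $\kappa p$, and $h$ primitive of degree $\kappa p$. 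Then $\theta=\mathbf D(n,m,kh)$ is a principal polarization whose invariant restricts to $q_s$ on $z=0$ (Proposition~\ref{prop: ref_hum_computation}).
\item $f$ is then identified with $q_{(A,\theta)}$ through this common binary subform.
\end{enumerate}

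Your first sufficiency step should be dropped. Equivalent forms represent the same integers, so no change of basis turns $n^2$ into $1$; indeed $q_C$ never represents $1$, by \sref{thetaisirreducible}. There is also no $(A,\theta)$ yet to replace, and isogenies do not preserve $q_{(A,\theta)}$. In the actual argument, \sref{eq: class_ref_condition_2} enters genus-theoretically, giving $\chi(f)=\chi(n^2)=1$ for every assigned character of $f$, and \sref{eq: class_ref_condition_1} forces $16\mid I_1(f)$. These facts are what show that the characters of $F_f^B$ are those of an existing genus of primitive binary forms $q'$ of discriminant $16\disc(f)/I_1(f)^2$. Your second step, like Proposition~\ref{p: primitive_alg} (where it is a hypothesis), takes this for granted.

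The imprimitive case of sufficiency is not treated at all. There $f\notin\gen(f_q)$ for every $q$, since genus preserves content. One needs an even principal polarization, which forces $\cont(q)$ to be odd. The construction requires:
\begin{itemize}
\item the auxiliary form $q_{I_1,I_2}$ of \sref{eq: q_Omega_Delta}, built from an $a$ as in \sref{eq: a_conditions};
\item the genus statement \sref{eq: f_genus_eq_q_Omega_Delta};
\item the even analogue of the $\phi_p$ argument ($\cont(\phi_p)=2$, $2\phi_p\sim q_s$ with $s\in P(\kappa p)^{\ev}$), as in Theorem~\ref{thm: construction_A_theta_Imprimitive}.
\end{itemize}
A normalization to $f(v)=4$ is again unavailable.

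In the necessity direction two repairs are needed. First, Propositions~\ref{imprimitiveform}(iii) and \ref{F=F^B} presuppose that $f/2$ is improperly primitive ($\Omega_{f/2}$ is not defined otherwise), so they cannot prove it. Instead, use $\cont(f)=4$ from Proposition~\ref{eventhetaclassification}(ii). The bilinear form of $f/4$ is $\frac12(D_1.\theta)(D_2.\theta)-(D_1.D_2)$, which is $\equiv (D_1.D_2)\pmod 2$ because $\theta$ is even. It is therefore odd on the fibres $E_1\times 0$ and $0\times E_2$, so $f/4$ has an odd cross coefficient.

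Second, your ``density argument'' for $\gcd(n,\disc(f))=1$ must be supplied. For example, on the isotropic classes $\mathbf D(1,\deg h,h)$ the value $(D.\theta)$ is a quadratic polynomial in $h\in\Hom(E_1,E_2)$. One checks it is not identically $0$ modulo each prime $\ell\mid 2\disc(q_{E_1,E_2})$ (for $\ell=2$ this uses that $\theta$ is odd), and the Chinese remainder theorem finishes. The even case needs the same with $(D.\theta)/2$.

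Finally, the classification is unconditional. Primes represented by $F_f^B$ and prime to $\disc(f)$ exist classically; the paper uses GRH only for the effective bound in Proposition~\ref{prop: BW_existence_prime}.
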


\begin{thm}[Theorem 2 of \cite{refhum} (see Theorem 4.1.4 of \cite{kir2024curvesPHD})]
\label{thm: Theorem_2_ref_hum}
    Let $E_1 / K$ and $E_2 / K$ be two isogenous CM elliptic curves. If $f\in \gen(x^2+4{q_{E_1, E_2}})$, then there exists a principal polarization $\theta$ on $A=E_1 \times E_2$ such that $q_{(A, \theta)}\sim f$.
\end{thm}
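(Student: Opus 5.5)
The plan is to reduce the statement to a global isometry question for the lattice $\NS(A)$. I would use the standard description $\NS(E_1\times E_2)\simeq \Z\oplus\Z\oplus\Hom(E_1,E_2)$, writing $D\leftrightarrow(a,b,h)$, with self-intersection
$$(D.D)=2\bigl(ab-\deg h\bigr).$$
Under this description $(\NS(A),(.))$ is the orthogonal sum of a hyperbolic plane $U$ (the $(a,b)$-part) and the negative definite lattice $\Hom(E_1,E_2)$ with form $-2q_{E_1,E_2}$. The set $\mathcal P(A)$ consists of the classes $\theta$ with $(\theta.\theta)=2$ lying in the positive cone.

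\textbf{Step 1: the product polarization.} Take $\theta_0=(1,1,0)$, i.e.\ $E_1\times 0+0\times E_2$. Then $(D.\theta_0)=a+b$, and by \sref{eq:qth}
$$\tilde q_{(A,\theta_0)}(D)=(a+b)^2-4ab+4\deg h=(a-b)^2+4\deg h.$$
On $\NS(A)/\Z\theta_0$ the coordinate $x=a-b$ together with $h$ gives a basis, so $q_{(A,\theta_0)}\sim x^2+4q_{E_1,E_2}$. This proves the statement for the class of $x^2+4q_{E_1,E_2}$ itself.

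\textbf{Step 2: an abstract lattice attached to $f$.} From $f$ I would build an abstract pair $(L_f,\theta_f)$ with $(\theta_f.\theta_f)=2$ and the property that $v\mapsto(v.\theta_f)^2-2(v.v)$ induces $f$ on $L_f/\Z\theta_f$. The construction reverses Step 1: $\Z\theta_f$ together with the lattice $(L_f/\Z\theta_f,f)$ determines the pairing, because $2(v.v)=(v.\theta_f)^2-f(\bar v)$. The integrality and parity constraints on this pairing are local conditions, and they hold because $f\sim_p x^2+4q_{E_1,E_2}$ for every $p$, where the corresponding pair $(\NS(A),\theta_0)$ exists. The same local equivalences give $(L_f,\theta_f)\otimes\Z_p\simeq(\NS(A),\theta_0)\otimes\Z_p$ for all $p$, and the real case follows from positive definiteness. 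Hence $L_f$ and $\NS(A)$ lie in the same genus of even lattices of signature $(1,3)$.

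\textbf{Step 3: from genus to class (main obstacle).} I need a global isometry $\sigma:L_f\DistTo\NS(A)$. Since $\NS(A)\simeq U\oplus(-2\Hom(E_1,E_2))$ is indefinite of rank $4$, Eichler's theorem shows that its genus splits into spinor genera and each spinor genus is a single class. The remaining point is that the genus equals the spinor genus. I would first try the standard fact that if a lattice has a hyperbolic plane $U$ as an orthogonal summand, then for every $p$ the local spinor norms of its proper isometries contain all of $\Z_p^{\times}(\Q_p^{\times})^2$. This forces one spinor genus, so $L_f\simeq\NS(A)$. One must also check that the $\Z_p$-isometries from Step 2 can be chosen with determinant $1$, or absorb the determinant using a reflection in $U$.

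\textbf{Step 4: conclusion.} Set $\theta=\pm\sigma(\theta_f)$, with the sign chosen so that $\theta$ lies in the positive cone. Then $(\theta.\theta)=2$, so $\theta\in\mathcal P(A)$; it may be reducible, which the statement allows. Because $\sigma$ is an isometry carrying $\Z\theta_f$ to $\Z\theta$, the induced form satisfies $q_{(A,\theta)}\sim f$.
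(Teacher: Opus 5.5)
Your argument is correct, and it takes a genuinely different route. The paper does not reprove this theorem; it imports it from Kani's preprint and the thesis. The route the paper relies on is visible in the proof of Theorem \ref{thm: construction_A_theta}:
choose a prime $p\nmid\disc(f)$ represented by $F^B_f$; extract (Dickson's Theorem 38) a binary subform $\phi_p$ of $f$ of discriminant $-16\kappa p$; show it lies in the principal genus, so that $\phi_p\sim q_s$ with $s=(n,m,k)\in P(\kappa p)$ by \sref{eq: Theorem_13_MJ}; take a cyclic isogeny $h$ of degree $\kappa p$ and set $\theta=\mathbf{D}(n,m,kh)$; and then deduce $q_{(A,\theta)}\sim f$ from the arithmetic of ternary forms.

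You instead observe that $f\mapsto(L_f,\theta_f)$ and $(\NS(A),\theta)\mapsto q_{(A,\theta)}$ are mutually inverse. The theorem therefore reduces to $\NS(A)\simeq U\oplus\bigl(\Hom(E_1,E_2),-2q_{E_1,E_2}\bigr)$ being alone in its genus. Your spinor-norm observation gives this together with Eichler's theorem; note that the reflections in $(1,u,0)$, $u\in\Z_p^\times$, are integral also at $p=2$.

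The one point you should write out is Step 2. The functional $v\mapsto(v.\theta_f)$ on a lift of $\Z^3$ must be an integral linear form $\ell$ with $f\equiv\ell^2\pmod 4$ coefficientwise. This condition is equivalent to \sref{eq: class_ref_condition_1}, and it determines $\ell \bmod 2$ from the diagonal of $f$. Changing the lift changes $\ell$ by a form with values in $2\Z$. Together these make $(L_f,\theta_f)$ canonical and compatible with $\GL_3(\Z_p)$-equivalence.

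What your approach buys:
\begin{itemize}
\item It is short.
\item It works for the prescribed $A$ directly. The paper's constructive version replaces $q_{E_1,E_2}$ by a form $\tilde q$ in its genus representing $\kappa p$, and reaching a given $A$ that way needs extra input, for example a prime represented simultaneously by $F^B_f$ and $q_{E_1,E_2}/\kappa$.
\item It needs no results on primes represented by forms.
\end{itemize}
The price is that strong approximation only gives the existence of the isometry $\sigma$. The paper's route outputs the explicit triple $(n,m,k)$ and isogeny $h$, which is what Algorithms \ref{m: method_1_primitive} and \ref{m: method_2_imprimitive} require.
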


Recall from \cite{milne1986jacobian} that if $C$ is a curve of genus $2$, then its Jacobian $J_C$ is an abelian surface
and there is a divisor $\theta_C$ on $J_C$, called the \textit{theta-divisor} such that $\theta_C$ is a principal polarization in $\NS(J_C)$. We then write $q_C := q_{(J_C,\theta_C)}$ for its associated refined Humbert invariant.  

One key property of the refined Humbert invariant $q_{(A,\theta)}$ is that it can be used to determine whether $(A,\theta)$ is a Jacobian. By Proposition 6 of \cite{MJ}, we have that 
\begin{equation} 
\label{thetaisirreducible}
(A,\theta) \simeq (J_C,\theta_C), \mbox{ for some curve }C\; \Leftrightarrow\; 
q_{(A,\theta)}(D)\neq 1, \ \forall D\in \NS(A,\theta).
 \end{equation}

If $A=E_1\times E_2$ is a product surface, where $E_1$ and $E_2$ are two elliptic curves, then we have an (explicit) isomorphism by Proposition 23 of \cite{MJ}:
\begin{equation}
\label{eq: isomorphism_NS(A)}
    \mathbf{D} :\Z \oplus\Z\oplus\Hom(E_1,E_2)\DistTo \NS(A).
\end{equation}

Let us present the computation of the refined Humbert invariant for a given degree form and a principal polarization here. One can use this result to verify the output of the main algorithms in this paper.

\begin{prop}
\label{prop: ref_hum_computation}
Let $A = E \times E'$ be a CM product surface over $K$. Let $\theta = \textbf{D}(n, m, kh) \in \mathcal{P}(A)$, where $h$ is a cyclic isogeny in $\Hom(E, E')$. Then we have a basis $\{h, h'\}$ of $\Hom(E, E')$, and let $q_{E, E'}(xh + yh') = [a, b, c]$. We also have
\[
q_{(A, \theta)} = \left[ n^2, \ m^2(mn + 3)a, \ b^2k^2 + 4c, \ -2bm(mn + 1), \ -2bkn, \ 2ka(mn + 2) \right].
\]
\end{prop}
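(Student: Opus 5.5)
The plan is to compute $q_{(A,\theta)}$ directly from the definition \sref{eq:qth}. This needs three things: the intersection pairing on $\NS(A)$ written in the coordinates given by $\mathbf{D}$ of \sref{eq: isomorphism_NS(A)}, an explicit $\Z$-basis of $\NS(A)/\Z\theta$, and a Gram-matrix computation.

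\textbf{The intersection pairing.} From Proposition 23 of \cite{MJ}, $\mathbf{D}(x_1,x_2,f)$ is the class $x_1[0\times E']+x_2[E\times 0]$ corrected by the graph-type divisor attached to $f$. The pairing is then
\[
\bigl(\mathbf{D}(x_1,x_2,f).\mathbf{D}(y_1,y_2,g)\bigr)=x_1y_2+x_2y_1-(f,g),
\]
where $(f,g)=\deg(f+g)-\deg f-\deg g$ is the bilinear form of the degree map. I will take this formula from \cite{MJ} rather than rederive it; the overall sign conventions must be checked against it.

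With $\deg h=a$, $(h,h')=b$ and $\deg h'=c$, the self-intersection of $\theta=\mathbf{D}(n,m,kh)$ is $2(nm-k^2a)$. Since $\theta$ is a principal polarization, $(\theta.\theta)=2$, which gives the key relation
\[
nm-k^2a=1.
\]
In particular $\gcd(n,k)=1$, and this relation will be used repeatedly to simplify coefficients.

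\textbf{Choosing a basis of the quotient.} Write classes in the coordinates $(e_1,e_2,h,h')$, so that $\theta=(n,m,k,0)$. I complete $\theta$ to a $\Z$-basis of $\NS(A)$ using three vectors:
\begin{itemize}
\item $u=e_2=\mathbf{D}(0,1,0)$;
\item $w=\mathbf{D}(ka,\beta,mh)$ for a suitable integer $\beta$;
\item $v=\mathbf{D}(0,0,h')$.
\end{itemize}
The change-of-basis determinant reduces to $nm-k\cdot ka=1$, so $\{\theta,u,w,v\}$ is a basis and $\{u,w,v\}$ maps to a basis of $\NS(A,\theta)$.

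\textbf{The diagonal entries.} These are $q(D)=(D.\theta)^2-2(D.D)$.
\begin{itemize}
\item For $u$: $(u.\theta)=n$ and $(u.u)=0$, so $q(u)=n^2$. This is the first coefficient.
\item For $v$: $(v.\theta)=-kb$ and $(v.v)=-2c$, so $q(v)=k^2b^2+4c$. This is the third coefficient.
\end{itemize}

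\textbf{The off-diagonal entries.} These come from the polar form
\[
(D_1.\theta)(D_2.\theta)-2(D_1.D_2).
\]
The pair $(u,v)$ already gives $n(-kb)-0=-bkn$. In the paper's convention $[a,b,c,r,s,t]$ the cross coefficients are twice this, which matches $-2bkn$.

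\textbf{Fixing $\beta$ and the main obstacle.} The hard step is the second basis vector $w$. I will choose $\beta$, or more generally replace $w$ by $w+\lambda\theta$ and use integral changes of basis in $u$ and $v$, so that:
\begin{itemize}
\item $q(w)=m^2(mn+3)a$;
\item the $w$-$v$ cross term is $-2bm(mn+1)$;
\item the $u$-$w$ cross term is $2ka(mn+2)$.
\end{itemize}
Concretely, I will compute $(w.\theta)=kam+\beta n-2k\,m\,a$ and $(w.w)=2ka\beta-2m^2a$. Then I will reduce the resulting expressions modulo the relation $k^2a=nm-1$ to eliminate $k^2$.

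My first attempt is the natural choice $\beta=-m^2$ (up to sign), since the target coefficients are polynomials in $m$ and $mn$ only, apart from the single factor $k$. If that choice fails, I will allow $w\mapsto w+\lambda\theta$; this leaves the induced form on the quotient unchanged but makes matching the formula easier. A final $\GL_3(\Z)$ sign change is permitted, since only the class of the form matters.

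The remaining risk is a sign or convention mismatch in the pairing formula from \cite{MJ}. I would resolve this by checking the known special case $n=m=1$, $k=0$. There $\theta$ is the product polarization, and $q_{(A,\theta)}$ should be equivalent to $[1,4a,4c,0,0,\ast]$, which is consistent with Theorem \ref{thm: Theorem_2_ref_hum}.
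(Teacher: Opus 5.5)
Your setup is the paper's own: complete $\theta$ to a basis of $\NS(A)$ by $\mathbf{D}(0,1,0)$, $\mathbf{D}(0,0,h')$ and a vector with $\Hom$-component $\pm mh$ (the paper takes $D'=\mathbf{D}(-ka,0,-mh)$). You then evaluate with the pairing (22) of \cite{MJ}, which you state correctly, together with $nm-k^2a=1$.

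The gap is that the three coefficients involving $w$, which are the real content of the statement, are never actually determined, and the way you propose to get them fails.

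With your first choice $\beta=-m^2$, your own formulas give $(w.\theta)=-m(mn+ka)$ and
\[
q(w)=m^2(mn+ka)^2+4m^2a(k+1).
\]
This is not $m^2a(mn+3)$: for $n=m=a=1$, $k=0$ it is $5$, not $4$. Likewise, since $(u.w)=ka$, the $u$--$w$ cross coefficient is $2\beta n^2-2ka(mn+2)$, so $\beta=-m^2$ adds a spurious $-2m^2n^2$.

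Your fallback $w\mapsto w+\lambda\theta$ cannot help. It does not change the class of $w$ in $\NS(A,\theta)$, so it changes no coefficient at all. The freedom that actually matters is $w\mapsto w+\mu u$, which shifts $\beta$ by $\mu$.

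The missing step is simply to take $\beta=0$, i.e.\ $w=\mathbf{D}(ka,0,mh)=-D'$. Then $(w.\theta)=-kam$, $(w.w)=-2m^2a$, $(u.w)=ka$ and $(w.v)=-mb$. Using $k^2a=nm-1$, you get
\[
q(w)=m^2a(k^2a+4)=m^2a(mn+3),\qquad 2\bigl[n(-kam)-2ka\bigr]=-2ka(mn+2),\qquad 2\bigl[(-kam)(-kb)+2mb\bigr]=2bm(mn+1).
\]
So in the ordered basis $(u,-w,v)$ of $\NS(A,\theta)$ you obtain exactly the stated form; the vector $-w$ is precisely the paper's $D'$.

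Your proposed sanity check is also misstated. For $n=m=1$, $k=0$ the formula gives $[1,4a,4c,-4b,0,0]$. So the nonzero off-diagonal entry is the $yz$-coefficient, not the $xy$-coefficient, which equals $2ka(mn+2)=0$ when $k=0$. The shape $[1,4a,4c,0,0,\ast]$ you wrote would in general not even have the right discriminant $16(b^2-4ac)$.
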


\begin{proof}
Since $h$ is cyclic, there is $h' \in \Hom(E, E')$ such that $\Hom(E, E')$ has a basis $\{h, h'\}$. Therefore, we see from (\ref{eq: isomorphism_NS(A)}) that
$$
\beta:=\{D_1 = \textbf{D}(1, 0, 0), D_2 = \textbf{D}(0, 1, 0), D_3 = \textbf{D}(0, 0, h), D_4 = \textbf{D}(0, 0, h')\}
$$
gives a basis of $\NS(A)$. It is easy to construct a basis of $\NS(A)$ containing $\theta$ by the linear algebra. (Note that $\theta$ is primitive since $\theta$ is a principal polarization). 

By adapting the proof of Proposition 29 of \cite{MJ}, we see that $\theta, D_2, D_4$ and $D' := \textbf{D}(-k\deg(h), 0, -mh)$ forms a basis of $\NS(A)$. Alternatively, if we write the matrix $M_{\beta}$ of these elements with respect to the basis $\beta$, then we see that $\det(M_{\beta})=k^2\deg(h)-nm=1$ since $\theta$ is a principal polarization,  and so $D_2, D_4, D'$ gives a basis of $\NS(A) / \mathbb{Z}\theta$.

By applying the intersection number formula of any two divisors in (22) of \cite{MJ}, we can compute the refined Humbert invariant $q_{(A, \theta)}$, and the assertion follows.
\end{proof}

For an abelian surface $A$, let  $\mathcal{P}(A)^{\odd}:=\mathcal{P}(A)\setminus\mathcal{P}(A)^{\ev}$, where 
$$
\mathcal{P}(A)^{\ev} \;=\; \{\theta\in\mathcal{P}(A):   (D.\theta) \equiv 0 \Mod{2},\  \forall D\in \NS(A)\}.
$$
Let $A=E_1\times E_2$ be a product surface. While $\mathcal{P}(A)^{\odd}$ is always nonempty, $\mathcal{P}(A)^{\ev}$ may be an empty set for certain cases (cf.\ Proposition 3.6 of \cite{harun_1}). 
 
 The following proposition distinguishes several important results of the refined Humbert invariant $q_{(A,\theta)}$ depending on $\theta\in \mathcal{P}(A)^{\ev}$ or $\mathcal{P}(A)^{\odd}$.

\begin{prop}
\label{eventhetaclassification}
Let $A=E_1\times E_2$ be a CM product surface. Then we have that

\item[(i)] If $\theta\in \mathcal{P}(A)$, and $p$ is an odd prime or $p = \infty$, then $q_{(A,\theta)} \sim_p f_q$, where $f_q:=x^2+4q_{E_1,E_2}$. In particular, if $\theta\in \mathcal{P}(A)^{\text{odd}}$, then $q_{(A,\theta)} \in \gen(f_q)$ and $\cont(q_{(A,\theta)})=1$.

\item[(ii)] $\theta\in \mathcal{P}(A)^{\ev}$ if and only if $q_{(A,\theta)}$ is imprimitive. In this case, $\cont(q_{(A,\theta)})=4$.

\end{prop}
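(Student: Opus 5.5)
Since $A=E_1\times E_2$ is a product surface, Proposition 29 of \cite{MJ} lets us move any $\theta\in\mathcal{P}(A)$ into a normal form $\theta=\mathbf{D}(n,m,kh)$ with $h$ a cyclic isogeny and $k^2\deg(h)-nm=1$. Proposition \ref{prop: ref_hum_computation} then writes $q_{(A,\theta)}$ explicitly in terms of $n,m,k$ and the coefficients $[a,b,c]$ of $q_{E_1,E_2}$ in the basis $\{h,h'\}$, with $a=\deg h$. Both parts will be read off from this explicit form. A second ingredient is the intersection formula (22) of \cite{MJ}, which gives $(D.\theta)$ for $D$ running over the basis $D_1,\dots,D_4$.

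For (i), first handle $p=\infty$. Both $q_{(A,\theta)}$ and $f_q$ are positive definite ternary forms, so they are equivalent over $\mathbb{R}$.

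For an odd prime $p$, I would diagonalise. By the formula for $\tilde q_{(A,\theta)}$, a divisor $D$ with $(D.\theta)=d$ and $D\cdot D$ related to $\deg$ splits off. Concretely, I would find a $\Z_p$-basis of $\NS(A,\theta)\otimes\Z_p$ on which $q_{(A,\theta)}$ becomes $u\,x^2\perp 4Q$. Here $Q$ should be $\Z_p$-equivalent to $q_{E_1,E_2}$, and $u$ should be a unit square class.

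Since $2$ is invertible in $\Z_p$, I can complete squares in the explicit form of Proposition \ref{prop: ref_hum_computation}. Doing so uses the relation $k^2a-nm=1$ and keeps track of the orthogonal complement. The determinant check $\disc(q_{(A,\theta)})=16\disc(q_{E_1,E_2})$ from Proposition \ref{imprimitiveform}(i) fixes the square class of the product. To identify $Q$ with $q_{E_1,E_2}$ up to $\Z_p$-equivalence, rather than only matching discriminants, I would compare Hasse invariants. Alternatively, the image of $\Hom(E_1,E_2)$ in $\NS(A)$ sits as the orthogonal complement of $\mathbf{D}(\Z\oplus\Z)$, and restricting $\tilde q$ to it gives exactly $4\deg$, up to a unimodular change.

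I expect the main obstacle to be making this local splitting clean. The candidate $x^2$ summand comes from the class of $D_1$ or $D_2$ modulo $\theta$, and I would show its value $q(D)$ is a $p$-adic unit square times a factor that is coprime to $p$ after adjusting.

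For the "in particular" clause, assume $\theta$ is odd. Then some $D$ has $(D.\theta)$ odd, so $q_{(A,\theta)}(D)=(D.\theta)^2-2(D.D)$ is odd, because $(D.D)$ is even on an abelian surface. Hence $q_{(A,\theta)}$ is primitive and its $2$-adic content is $1$. To get $q_{(A,\theta)}\sim_2 f_q$, I would use the $\Z_2$-lattice $\NS(A)\otimes\Z_2$ with the even unimodular-type form $-(D.D)$. With $\theta$ odd, $\theta^\perp$ modulo $\theta$ splits $2$-adically as $\langle 1\rangle\perp 4\cdot(\text{degree form})$. Combined with the odd primes and $\infty$, this gives genus equivalence.

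For (ii), the forward direction is as follows. If $\theta$ is even, then $(D.\theta)^2\equiv0\pmod 4$ and $2(D.D)\equiv0\pmod 4$, so $4\mid q_{(A,\theta)}$ on all of $\NS(A,\theta)$. Therefore $\cont(q_{(A,\theta)})$ is divisible by $4$. It cannot be divisible by $8$ or by an odd prime: the discriminant formula together with $p$-adic equivalence at odd $p$ from (i) rule out odd factors, and a direct check on the explicit form rules out an extra factor of $2$. That check uses the coefficient $n^2$ and the relation $k^2a-nm=1$.

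The converse direction follows from the parity argument in (i). If $\theta$ is odd, then $q_{(A,\theta)}$ takes an odd value and is therefore primitive.
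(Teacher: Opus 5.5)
The paper gives no argument of its own here. It quotes (i) from Corollary~19 and Theorem~20 of \cite{kani2014jacobians}, and (ii) from Corollary~3.5 and Proposition~4.1 of \cite{harun_1}. Your direct lattice-theoretic route is reasonable, and several parts are fine:
\begin{itemize}
\item $p=\infty$;
\item the parity argument: $(D.D)$ is even, so an odd $\theta$ gives an odd value of $q_{(A,\theta)}$ and an even $\theta$ gives $4\mid q_{(A,\theta)}$;
\item (ii), where the ``direct check'' can be made explicit. At $D_1,D_2,D_1+D_2$ the form takes the values $m^2$, $n^2$, $(m+n)^2-4$. No odd prime divides all three, and if $n,m$ are even, one of them is $4$ times an odd number. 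This also removes the need for (i) inside (ii).
\end{itemize}
The correct normalization is $nm-k^2\deg h=1$, forced by $(\theta.\theta)=2$, not $k^2\deg h-nm=1$.

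Your odd-$p$ argument for (i) does not work as written, for three reasons.
\begin{itemize}
\item Matching discriminants and Hasse invariants gives only $\Q_p$-equivalence, not $\Z_p$-equivalence.
\item Your ``alternative'' is false, because $\mathbf{D}(0,0,\Hom(E_1,E_2))$ is not orthogonal to $\theta$: one has $\tilde q_{(A,\theta)}(\mathbf{D}(0,0,g))=k^2(\deg(g+h)-\deg g-\deg h)^2+4\deg g$.
\item Splitting off $D_1$ or $D_2$ fails when $p\mid\gcd(n,m)$, e.g.\ $n=m=3$, $k=2$, $\deg h=2$.
\end{itemize}
The right tool is $\theta^\perp$. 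Since $(\theta.\theta)=2\in\Z_p^\times$, we have $\NS(A)\otimes\Z_p=\Z_p\theta\perp\theta^\perp$, so $(\NS(A,\theta)\otimes\Z_p,q_{(A,\theta)})\cong(\theta^\perp,-2(\cdot.\cdot))$. Now $\mathbf{D}(\Z\oplus\Z\oplus0)\otimes\Z_p\cong\langle2,-2\rangle$ and $\Z_p\theta\cong\langle2\rangle$. Witt cancellation over $\Z_p$ then gives $\theta^\perp\cong\langle-2\rangle\perp\mathbf{D}(0,0,\Hom(E_1,E_2))\otimes\Z_p$. Hence $q_{(A,\theta)}\sim_p 4x^2+4q_{E_1,E_2}\sim_p f_q$.

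The real gap is at $p=2$, which is exactly what the genus statement $q_{(A,\theta)}\in\gen(f_q)$ for odd $\theta$ needs. You only assert the $2$-adic splitting $\langle1\rangle\perp4\deg$, and ``$\theta^\perp$ modulo $\theta$'' has no meaning, since $\theta\notin\theta^\perp$.

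Over $\Z_2$, the line $\Z_2\theta$ does not split off. What oddness of $\theta$ does give is some $D_0$ with $(D_0.\theta)=1$. Then $P=\Z_2\theta+\Z_2D_0$ is even unimodular, $\NS(A)\otimes\Z_2=P\perp P^\perp$, and $q_{(A,\theta)}\sim_2\langle 1-2(D_0.D_0)\rangle\perp(-2(\cdot.\cdot))|_{P^\perp}$. To reach $\langle1\rangle\perp4q_{E_1,E_2}$ you still have two things to do:
\begin{itemize}
\item Decide whether $P$ is the hyperbolic or the anisotropic even unimodular plane. This is governed by $(D_0.D_0)\bmod 4$, which can be changed only if $P^\perp$ contains some $w$ with $(w.w)\equiv2\pmod 4$.
\item Compare $P^\perp$ with $\mathbf{D}(0,0,\Hom(E_1,E_2))\otimes\Z_2$, knowing only $P\perp P^\perp\cong\mathbf{D}(\Z\oplus\Z\oplus0)\perp\mathbf{D}(0,0,\Hom(E_1,E_2))$ over $\Z_2$. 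Witt cancellation is not available here, and when $P$ is anisotropic there is nothing to cancel.
\end{itemize}
This $2$-adic analysis is the substance of Kani's Theorem~20. Without it, your proposal establishes (ii) and the odd-$p$/$\infty$ part of (i), but not that $q_{(A,\theta)}\in\gen(f_q)$.
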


\begin{proof}
    The assertion (i) follows from Corollary 19 and Theorem 20 of \cite{kani2014jacobians}. The assertion (ii) follows from Corollary 3.5 and Proposition 4.1 of \cite{harun_1}. 
\end{proof}

\section{Construction of the abelian surface}

\label{S: section_constructionA}

In this section, for a given geometric form $f$, the aim is to construct a CM product abelian surface $A=E\times E'$ such that $f\sim q_{(A,\theta)}$, for some $\theta\in\mathcal{P}(A)$. To this end, for a given positive definite binary quadratic form $q$ we  explicitly construct CM elliptic curves $E/\C$ and $E'/\C$ such that $q\sim q_{E,E'}$ by following \cite{kani2011products}. We give an analytic construction of $E$ and $E'$, but one presents a known method to describe the elliptic curves algebraically by computing the \textit{j}-invariant of the associated lattices to the elliptic curves. We then see that this construction suffices to describe the abelian surface $A$ corresponding to the geometric form.

More precisely, assume that $f$ is a geometric form.  There is an algorithm giving a positive definite binary quadratic form $q$ such that $f$ lies in the genus of a ternary form associated with the form $q$; cf.\ Algorithms \ref{alg: return_q_primitive} and \ref{alg: return_q_imprimitive_g} below. If we construct CM elliptic curves $E/\mathbb{C}$ and $E'/\mathbb{C}$ such that $q\sim q_{E,E'}$, then we can conclude that there is a principal polarization $\theta$ on $A = E\times E'$ such that $f \sim q_{(A,\theta)}$ by Proposition \ref{p:construction_A} below. Recall that we can view a quadratic order $\mathcal{O}_{\Delta}$ of discriminant $\Delta<0$ as a lattice in $\C$ (or in $\Q(\sqrt{\Delta})$). The following lemma addresses the construction of CM elliptic curves as was discussed above. It essentially follows from several results of \cite{kani2011products}. 

\begin{lem}
\label{lem: construction_q_E,E}
    Let $q$ be a positive definite binary quadratic form with $\disc(q)=\Delta$ and $\cont(q)=\kappa$. Let us put $q':=\frac{1}{\kappa}q=[a, b, c]$, and let $\Delta':=\disc(q')=\Delta/\kappa^2$. If $L$ is the lattice of $\Q(\sqrt{\Delta'})$     with  basis  $\{a, (b+\sqrt{\Delta'})/2\}$, then the elliptic curve $E'=\mathbb{C}/L$ is isogenous to $E=\mathbb{C}/\mathcal{O}_{\Delta}$, and  we have that $q_{E,E'}\sim q$.
\end{lem}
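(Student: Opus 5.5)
We would prove the lemma by the classical analytic description of homomorphisms between complex tori. We identify $\Hom(E,E')$ with the $\Z$-module $\{\alpha\in\C : \alpha\mathcal{O}_{\Delta}\subset L\}$, and compute the degree of $\alpha$ as the index $[L:\alpha\mathcal{O}_\Delta]$. The claim then reduces to two facts: this module is exactly $L$, and the index equals $q(x,y)$ in the basis $\{a,(b+\sqrt{\Delta'})/2\}$. Isogeny of $E$ and $E'$ follows at once, since any nonzero $\alpha\in L$ gives a nonzero homomorphism.

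First we set up the orders. Since $q'=[a,b,c]$ is primitive with $\disc(q')=\Delta'<0$, the lattice $L=\Z a+\Z\frac{b+\sqrt{\Delta'}}{2}$ is a proper (invertible) fractional ideal of $\mathcal{O}_{\Delta'}$, i.e.\ its multiplier ring is exactly $\mathcal{O}_{\Delta'}$. Its norm relative to $\mathcal{O}_{\Delta'}$ is $N(L)=a$, and the standard identity (Cox, \S7) gives
\[
N\!\left(xa+y\tfrac{b+\sqrt{\Delta'}}{2}\right)=a\,q'(x,y).
\]
(We would check the sign convention of $b$ here, possibly replacing $q'$ by the equivalent form $[a,-b,c]$.) Moreover, since $\Delta=\kappa^2\Delta'$, we have $\mathcal{O}_\Delta=\Z+\kappa\mathcal{O}_{\Delta'}\subset\mathcal{O}_{\Delta'}$, with $[\mathcal{O}_{\Delta'}:\mathcal{O}_\Delta]=\kappa$.

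Next we identify the Hom module. If $\alpha\mathcal{O}_\Delta\subset L$, then $\alpha=\alpha\cdot 1\in L$. Conversely, if $\alpha\in L$, then $\alpha\mathcal{O}_\Delta\subset\alpha\mathcal{O}_{\Delta'}\subset L$ because $L$ is an $\mathcal{O}_{\Delta'}$-module. Hence $\Hom(E,E')\cong L$ as $\Z$-modules, with basis $\{a,(b+\sqrt{\Delta'})/2\}$.

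Finally we compute the degree. For $0\neq\alpha\in L$,
\[
\deg\alpha=[L:\alpha\mathcal{O}_\Delta]=[L:\alpha\mathcal{O}_{\Delta'}]\,[\alpha\mathcal{O}_{\Delta'}:\alpha\mathcal{O}_\Delta]=\frac{N(\alpha)}{N(L)}\cdot\kappa .
\]
Here the first factor uses multiplicativity of norms of invertible $\mathcal{O}_{\Delta'}$-ideals: $[\mathcal{O}_{\Delta'}:\alpha L^{-1}]=N(\alpha)/a$, and we transfer this through the index of $L$. Substituting the norm identity gives $\deg\alpha=\kappa\,q'(x,y)=q(x,y)$, so $q_{E,E'}\sim q$.

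We expect the main obstacle to be the careful justification of $[L:\alpha\mathcal{O}_{\Delta'}]=N(\alpha)/N(L)$. It holds because $L$ is proper for $\mathcal{O}_{\Delta'}$, which is precisely where primitivity of $q'$ is used. The other delicate point is pinning down the sign and orientation conventions in the norm identity, so that the resulting form is equivalent to $q$ and not merely to its opposite $[a,-b,c]$. Since $[a,-b,c]$ is $\GL_2(\Z)$-equivalent to $[a,b,c]$, this issue is harmless for the stated equivalence. Alternatively, all of this can be cited from the results of \cite{kani2011products} on $\Hom(E,E')$ for CM curves.
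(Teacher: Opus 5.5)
Your proof is correct, but it takes a genuinely different route from the paper's. The paper never computes $\Hom(E,E')$ directly. It introduces the norm form $q_L$ of $L$ and reads off the multiplier ring $R(L)=\mathcal{O}_{\Delta'}$ from $\disc(q_L)$. It then gets $\End(E')\simeq R(L)$ and the isogeny $E\sim E'$ from the general theory of \cite{kani2011products}. Finally it obtains $q_{E,E'}\sim[R(L):\mathcal{O}_\Delta]\,q_L=\kappa q'$ from Kani's ideal-theoretic description of the degree form, via $I_E(E')\simeq\mathcal{O}_\Delta L^{-1}$.

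You instead identify $\Hom(\C/\mathcal{O}_\Delta,\C/L)$ with $\{\alpha:\alpha\mathcal{O}_\Delta\subseteq L\}=L$ and compute $\deg\alpha=[L:\alpha\mathcal{O}_\Delta]$ by hand. Your route is self-contained and yields slightly more. The degree form is exactly $q$ on the explicit basis $\{a,(b+\sqrt{\Delta'})/2\}$, so an isogeny of degree $\kappa a$ is simply $z\mapsto az$. This is precisely the explicitness the paper later wants for $h$ in Theorem~\ref{thm: construction_A_theta}, where $h$ becomes $z\mapsto pz$. The paper's route costs nothing given Kani's framework, and it keeps the lemma inside the ideal-class machinery the paper relies on elsewhere, e.g.\ the correspondence with $\gen(q_f)$ in Remark~\ref{rem: const_surfaces_all_A}.

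The obstacles you flag are lighter than you expect.
\begin{itemize}
\item \textbf{The index formula.} Since $\Delta'=b^2-4ac\equiv b\pmod 2$, you have $L\subseteq\mathcal{O}_{\Delta'}$ with $[\mathcal{O}_{\Delta'}:L]=a$. So $[L:\alpha\mathcal{O}_{\Delta'}]=N(\alpha)/a$ is just multiplicativity of indices in the chain $\alpha\mathcal{O}_{\Delta'}\subseteq L\subseteq\mathcal{O}_{\Delta'}$, together with $[\mathcal{O}_{\Delta'}:\alpha\mathcal{O}_{\Delta'}]=N(\alpha)$. No invertibility of $L$ is needed.
\item \textbf{The sign.} A direct computation gives $N\bigl(xa+y\tfrac{b+\sqrt{\Delta'}}{2}\bigr)=a(ax^2+bxy+cy^2)$ exactly. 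So no sign adjustment arises.
\item \textbf{Primitivity.} It is not what makes the argument work, contrary to your remark. With $\beta=(b+\sqrt{\Delta'})/2$, the relation $\beta^2=b\beta-ac$ shows $\mathcal{O}_{\Delta'}L\subseteq L$ for any $[a,b,c]$ of discriminant $\Delta'$. Primitivity only pins down $\End(E')\simeq\mathcal{O}_{\Delta'}$, which the lemma does not require.
\end{itemize}
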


\begin{proof}
  Let  $F = \Q(\sqrt{\Delta'})$ be an imaginary quadratic number field with discriminant $\Delta_F$.  Let $q_L$ be the binary form associated to $L$ as defined in \S3.3 of \cite{kani2011products}, i.e.,  
$$
q_L(x,y) \;=\; \frac{N(x\alpha + y\beta)}{N(L)},
$$
where $\{\alpha,\beta\}$ is a fixed basis of $L$ and $N(.)$ is the norm. If $L$ is the lattice with a basis $\{a, \frac{b+\sqrt{\Delta'}}{2}\}$ of $F$, then $N(L) = a$ (see \cite[p.~140]{davidcox}), and  it is easy to see that $q_L = [a, b, c] \sim q'$ (see Theorem 7.7 of \cite{davidcox}). 
    
 By (73) of \cite{kani2011products}, we see that $\disc(q_L)$ is equal to the discriminant $\Delta(R(L))$ of the order $R(L):=\{\alpha\in F : \alpha L\subset L\}$, and so $\Delta(R(L)) = \disc(q') =  \Delta'$. Thus, $R(L)$ is the (unique) order of $F$ of discriminant $\Delta'$.

Put $E'=\C/L$ and $E=\C/\mathcal{O}_{\Delta}$. Then by (68) of \cite{kani2011products} we have that $\End(E')\simeq R(L)$, and $\End(E)\simeq R(\mathcal{O}_{\Delta})=\mathcal{O}_{\Delta}$, so $E\sim E'$ by Proposition 36 of \cite{kani2011products}.

By Corollary 34 of \cite{kani2011products} we have that $I_E(E')\simeq \mathcal{O}_{\Delta}L^{-1}$, where $I_E(E'):=\Hom(E',E)\alpha$, with $\alpha: E\rightarrow E'$ is any isogeny. Since $\mathcal{O}_{\Delta}\subseteq R(L)$, we have that $I_E(E)I_E(E')^{-1}\simeq \mathcal{O}_{\Delta}(L^{-1})^{-1}=L$, and so by (74) of \cite{kani2011products} we obtain that $q_{E,E'}\sim \kappa 'q_L$, with $\kappa'=[R(L):\mathcal{O}_{\Delta}]=\kappa$. Thus, $q_{E,E'}\sim q$.
\end{proof}


Suppose that $f$ is a geometric form. Since we have by Proposition \ref{eventhetaclassification}(i) and (ii) that $\cont(f) = 1$ or $\cont(f) = 4$, the construction of a degree map $q_{E,E'}$ associated to $f$ is discussed in two cases based on the content of $f$.

\subsection{Case I: Primitive case}
Assume that $f$ is a geometric primitive form. Thus it follows by Proposition \ref{eventhetaclassification}(i)  that $f \in \gen(f_q)$, for some binary form $q$. If $q\sim q_{E, E'}$, for some elliptic curves $E$ and $E'$, then we see by Theorem \ref{thm: Theorem_2_ref_hum} that $f\sim q_{(E\times E',\theta)}$, for some principal polarization $\theta$ on $E\times E'$. Thus, we aim to determine this $q$, which will be denoted by $q_f$ later.

\begin{prop}
\label{p: primitive_alg}
    Suppose $f$ is a primitive geometric form. If there is a primitive positive definite binary form $q'$ with $\disc(q') = \frac{16\disc(f)}{I_1(f)^2}$ such that $\chi(F^B_f) = \chi(q')$, for all $\chi \in X(q')$, then  we have that $f\in  \gen(f_q)$, where $q= \frac{I_1(f)}{-16}q'$.
\end{prop}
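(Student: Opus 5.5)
The plan is to compare the genus invariants of $f$ and of $f_q=x^2+4q$ and then apply the Smith--Brandt genus criterion. That criterion says two primitive positive definite ternary forms are genus-equivalent as soon as they have
\begin{itemize}
\item the same basic invariants $I_1, I_2$, and
\item the same values $\chi(f)$ for all assigned characters $\chi\in X(f)$, together with the same values $\chi(F^B_f)$ for all $\chi\in X(F^B_f)$.
\end{itemize}
Here the sets of assigned characters are governed by $I_1, I_2$ through \sref{eq1: set_assigned_char_brandtI1}--\sref{eq2: set_assigned_char_brandtI2}.

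\textbf{Step 1 (basic invariants of $f_q$).} Write $q=\kappa q'$ with $q'=[a,b,c]$ primitive and $\kappa=I_1(f)/(-16)$. Let $\Delta'=\disc(q')$, so $\Delta=\disc(q)=\kappa^2\Delta'$. We compute $\adj(f_q)$ directly from \sref{eq: adjoint_formula}. We have $M(f_q)=\mathrm{diag}(2,4Q)$ with $Q$ the coefficient matrix of $q$, and $\det Q=-\Delta$. One gets
\[
\adj(f_q) \;=\; -16\bigl(-\Delta x^2 + q^*(y,z)\bigr) \;=\; -16\kappa\bigl(\kappa|\Delta'|x^2+q'^*(y,z)\bigr),
\]
where $q^*=[c,-b,a]\sim q$ and $q'^*=[c,-b,a]$ is the corresponding form for $q'$. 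The bracket is primitive because $q'$ is primitive. Hence
\[
F^B_{f_q}=\kappa|\Delta'|x^2+q'^*, \qquad I_1(f_q)=-16\kappa=I_1(f).
\]
Next, $\disc(f_q)=16\disc(q)=16\kappa^2\Delta'$, and by the hypothesis on $\disc(q')$ this equals $\disc(f)$. Formula \sref{discf} then gives $I_2(f_q)=I_2(f)$. So the basic invariants agree, and the two forms have the same sets of assigned characters.

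\textbf{Step 2 (characters of $f$).} Since $f$ is geometric, $f$ satisfies \sref{eq: class_ref_condition_2}. Thus $f$ represents $n^2$ with $\gcd(n,\disc f)=1$, and every assigned character of $f$ (odd $\chi_\ell$ as well as $\chi_{-4}$, $\chi_8$, $\chi_{-4}\chi_8$) takes the value $1$ on $f$. The form $f_q$ represents $1$, so its characters also take the value $1$.

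\textbf{Step 3 (characters of the reciprocal).} Here $F^B_{f_q}$ represents every value of $q'^*\sim q'$. Therefore $\chi(F^B_{f_q})=\chi(q')=\chi(F^B_f)$ for every $\chi\in X(q')\cap X(F)$, using the hypothesis $\chi(F^B_f)=\chi(q')$.

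\textbf{Main obstacle.} The hard part is the characters in $X(I_2)$ that are not assigned characters of $q'$. These are odd primes dividing $\kappa$ but not $\Delta'$, and the $2$-adic characters $\chi_{-4},\chi_8$ when $32\mid I_2$ but the corresponding character is not in $X(q')$.

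To handle them, I would use Proposition \ref{eventhetaclassification}(i): $f\in\gen(f_{q_0})$ for some binary $q_0$. Step 1 applied to $q_0$ shows that $q_0$ has the same content $\kappa$ and the same discriminant as $q$. Then $F^B_f$ and $F^B_{f_q}$ both represent numbers of the form $\kappa|\Delta'|m^2+(\text{value of }q'^*\text{ or }q_0'^*)$. For a prime $\ell\mid\kappa$ with $\ell\nmid\Delta'$, the character value on either reciprocal is read off from the values of the primitive binary part, which for both equals a fixed Kronecker symbol determined by $\Delta'$. Similarly, for the $2$-adic characters I would check that both values are forced by $\Delta'\bmod 8$ and $\kappa$. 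If this direct comparison becomes messy, I would instead compare $F^B_f$ and $F^B_{f_q}$ $\ell$-adically for these primes. For such $\ell$ both reciprocals are $\ell$-adically diagonalizable as $\kappa|\Delta'|x^2\perp(\text{unimodular binary of discriminant }\Delta')$, which forces the same character values.

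With all invariants and character values matched, the Smith--Brandt criterion yields $f\simeq f_q$, that is, $f\in\gen(f_q)$.
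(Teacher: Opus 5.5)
Your Steps 1--3 are the paper's proof, but you did not see that they already finish it. The ``main obstacle'' is empty, and the observation you are missing is the one the paper uses. By \sref{discf}, the hypothesis $\disc(q')=16\disc(f)/I_1(f)^2$ says exactly that $I_2(f)=\disc(q')=\Delta'$. Your own Step~1 formula confirms this: $\adj(F^B_{f_q})=\Delta' f_q$, so $I_2(f_q)=I_1(F^B_{f_q})=\Delta'$.

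The assigned characters of the reciprocal are governed by $I_2$ via \sref{eq2: set_assigned_char_brandtI2}, not by $\disc(F^B_{f_q})=-\kappa\Delta'^2$. Hence $X(F^B_f)=X(I_2)=X(q')$:
\begin{itemize}
\item the odd characters are exactly the $\chi_\ell$ with $\ell\mid\Delta'$, so odd primes dividing $\kappa$ but not $\Delta'$ never occur;
\item if $32\mid\Delta'$ (resp.\ $\Delta'\equiv 16\Mod{32}$), a binary form of discriminant $\Delta'$ carries both $\chi_{-4}$ and $\chi_8$ (resp.\ only $\chi_{-4}$) by p.~55 of \cite{davidcox}, which matches \sref{eq2: set_assigned_char_brandtI2}.
\end{itemize}
Thus Step~3 already gives $\chi(F^B_{f_q})=\chi(F^B_f)$ for all $\chi\in X(F^B_f)$, and the genus criterion applies. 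That is where the paper stops.

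As written, however, your proof does not close, because the argument you give for the supposed extra characters is wrong on its own terms. For an odd $\ell\nmid\Delta'$, the form $q'^*$ is nondegenerate mod $\ell$, so it represents both residues and non-residues mod $\ell$. There is no ``fixed Kronecker symbol'': $\chi_\ell$ is simply not an assigned character of either reciprocal. The $2$-adic part is only announced (``I would check''), not argued. You should replace the whole obstacle paragraph by the one-line identification $X(F^B_f)=X(I_2(f))=X(\disc(q'))=X(q')$.

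Incidentally, your detour through Proposition~\ref{eventhetaclassification}(i) could be turned into a genuinely different complete proof. From $f\simeq f_{q_0}$ one gets $\chi(q_0')=\chi(F^B_{f_{q_0}})=\chi(F^B_f)=\chi(q')$ for all $\chi\in X(q')$. Hence $q_0'\simeq q'$, and so $f_q\simeq f_{q_0}\simeq f$. That route relies on Kani's geometric result, whereas the paper's argument is purely arithmetic.
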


\begin{proof}
    We first prove that $f$ and $f_q$ have the same basic invariants $I_1$ and $I_2$. As in \cite[p.~167]{kani2014jacobians},  when one calculates the adjoint form of $f_q$, it is not hard to see that
    \begin{equation}
    \label{eq: I_1(f_q)}
        I_1(f_q) = -16\cont(q) =: -16\kappa, \ \text{ and }\  F^B_{f_q} \sim F,\ \text{ where }\  F(0,y,z) = \frac{1}{\kappa}q(y,z).
    \end{equation}
 Since $q'$ is primitive, we have that $\kappa = \frac{I_1(f)}{-16}$, and so $I_1(f) = I_1(f_q)$. Moreover, it is easy to see from \sref{eq: disc_formula_ternary_form} that $\disc(f_q) = 16 \disc(q) =: 16\Delta$. Also, observe that $16\Delta = 16\disc(\kappa q') = 16\kappa^2\disc(q') = 16\kappa^2 \frac{16\disc(f)}{I_1(f)^2} = \disc(f)$, and so $\disc(f_q) = \disc(f)$. Therefore,  it follows by \sref{discf} that $\disc(q')=I_2(f) = I_2(f_q)$. 
 
 We know  by hypothesis that $f$ and $f_q$ are (properly) primitive. 
 Then, observe by what was explained in \S\ref{s: ternary_binary_forms} that $f\in \gen(f_q)$ once we have shown that
    \begin{equation}
    \label{eq: genus_equivalence_primitive}
        \chi(f) \;=\; \chi(f_q), \ \forall \chi\in X(f_q) \ \text{ and } \ \chi(F^B_f) \;=\; \chi(F^B_{f_q}), \ \forall \chi\in X(F_{f_q}^B). 
    \end{equation}

  Since $f$ is a geometric primitive form,  it  represents an integer $n^2$, for some $n$ with $\gcd(n,\disc(f))=1$ by \sref{eq: class_ref_condition_2}, and so it  follows that $\chi(f) = \chi(n^2)= 1$, for every $\chi \in X(f)$. This also holds for $f_q$ by the same result (or by the fact that $f_q(1,0,0) = 1$). Thus, the first equation of \sref{eq: genus_equivalence_primitive} holds.

  Since $q'$ is primitive, there is an integer $n$ prime to $2\disc(q')$ such that $F \rightarrow q' \rightarrow n$ by \sref{eq: I_1(f_q)} (and \cite[p.55]{davidcox}). Moreover, since $\disc(q')=I_2(f)$, we have that $X(F) = X(q')$ by \sref{eq2: set_assigned_char_brandtI2}, and so it follows that $\chi(F) = \chi(n) = \chi(q')$, for all $\chi \in X(F)$. But, we have by the hypothesis that $\chi(F^B_f) = \chi(q')$, for all $\chi \in X(q')$, which implies that $\chi(F^B_{f_q}) = \chi(F) = \chi(q') = \chi(F^B_f)$, for all $\chi \in X(F^B_{f_q})$, and so the second equation of \sref{eq: genus_equivalence_primitive} follows. Thus, the assertion is verified.
\end{proof}

By using Proposition \ref{p: primitive_alg}, we can write the following algorithm.

\begin{alg}
\label{alg: return_q_primitive}
\hfill \\ \textbf{Input:} A geometric primitive ternary form $f$.

\noindent\textbf{Output:} A binary form $q$ such that $f\in \gen(f_q)$.

1) Compute $\kappa = |I_1(f)|/16$ and $\Delta = \disc(f)/16$.

2) List all primitive positive definite (reduced) binary forms $q'$ of $\disc(q') = \Delta/\kappa^2$.

3) Compute the reciprocal form $F^B_f$, and compute $\chi(F^B_f)$, for all $\chi \in X(q')$. 

4) For the forms $q'$ found in Step 2, compute $\chi(q')$, for all $\chi \in X(q')$ 
until obtaining that $\chi(F^B_f) = \chi(q')$, for all $\chi \in X(q')$.

5) Return $q:=\kappa q'$.
\end{alg}


\begin{remark}
\label{rem: determine_assigned_characters}
i) In Step 2, we list primitive positive definite binary forms. Fortunately, this can be done efficiently; see Theorem 5.11.2 of \cite{vollmer2007binary}. 

ii)  Steps 3 and 4 require determining the values of the assigned characters of a primitive binary form $q$ and a primitive ternary form $f$. 
    
    For this, we first determine the set of assigned characters. Recall that $X(q)$ is determined by calculating the prime decomposition of $\disc(q)$ by \cite[p.~55]{davidcox}, and also that $X(f)$ is determined by the prime decomposition of $I_1(f)$ by \sref{eq1: set_assigned_char_brandtI1}.

    Then whenever we have $X(q)$ and $X(f)$, the computations of $\chi(q)$ and $\chi(f)$ can be done by employing some odd numbers $n_1 \in R(q)$ and $n_2 \in R(f)$ such that $\gcd(n_1, \disc(q)) = 1$ and $\gcd(n_2, I_1(f)) = 1$. One can produce such numbers by using well-known results related to the quadratic forms. Indeed, we  present an algorithm (for other purposes as well; see Algorithm \ref{alg: find_repr_assigned_value} and Remark \ref{rem: binary_rep_assigned_value}  below), and this algorithm can be used for finding $n_1$ and $n_2$.  
\end{remark}

\subsection{Case II: Imprimitive case}
  Assume that $f$ is a geometric imprimitive form. In Case I, (i.e., when $f$ is primitive) the useful fact was that $f\simeq f_q$, for a binary form $q$ (i.e., $f\in \gen(f_q)$). In Case II, we search a similar result. 
To this end, let us recall some notations and results from \cite{harun_1}.

Suppose that $f$ is a geometric imprimitive form with the basic genus invariants $I_1(f/4)$ and $I_2(f/4)$. Let\footnote{Note that this means that $I_1 = \Omega_{f/2}$ and $I_2=\Delta_{f / 2}$ by Proposition \ref{F=F^B}, and so we can use the relevant result from \cite{harun_1} accordingly.} $I_1:=|I_1(f/4)|$ and $I_2:=-I_2(f/4)/8$, and so $I_1$ and $I_2$ are positive odd and positive even integers, respectively, by Proposition \ref{F=F^B}.

By Proposition 4.7 of \cite{harun_1}, there exists a positive integer $a$ such that:
\begin{equation}
\label{eq: a_conditions}
    aI_1 \equiv 3 \Mod{4} \quad \text{ and }  -2I_2 \text{ is quadratic residue in modulo } a.
\end{equation}
Thus, there exists a positive integer $b$ such that $-2I_2 \equiv b^2 \pmod{a}$. We may assume that $b$ is even by replacing $b$ with $a-b$, if necessary. Hence, it follows that $-2I_2 \equiv b^2 \pmod{4a}$ (since $I_2$ is even), and so we write:
\begin{equation}
\label{eq: q_Omega_Delta}
    q_{I_1, I_2}(x,y) \;:=\; I_1 a x^2 \;+\; I_1 b xy \;+\; I_1 \frac{b^2 + 2I_2}{4a} y^2
\end{equation}
which is a positive definite binary quadratic form of discriminant $-2I_2I_1^2$ with content $I_1$ (cf.\ Equation (4.6) of \cite{harun_1}). By Lemma \ref{lem: construction_q_E,E}, there exist CM elliptic curves $E_1$ and $E_2$ such that $q_{E_1,E_2}\sim   q_{I_1, I_2}$. Moreover, if we let $ A := E_1 \times E_2$, then  it follows from Proposition 4.10 of \cite{harun_1} that
\begin{equation}
\label{eq: f_genus_eq_q_Omega_Delta}
    f \;\simeq\; f^{\theta}_{q_{I_1,I_2}} \;:=\; q_{(A, \theta)}, \ \text{ for some } \ \theta \in \mathcal{P}(A).
\end{equation}
 Therefore, we write an analog Algorithm  \ref{alg: return_q_primitive} for the imprimitive case as follows:



\begin{alg}
\label{alg: return_q_imprimitive_g}
\hfill \\ \textbf{Input:} A geometric imprimitive form $f$.

\noindent\textbf{Output:} A binary form $q_{I_1,I_2}$ 
such that $f\simeq f^{\theta}_{q_{I_1,I_2}}$, for some $(A,\theta)$ as in \sref{eq: f_genus_eq_q_Omega_Delta}.

1) Compute the basic invariants $I_1(f/4)$ and $I_2(f/4)$, and put $I_1:=|I_1(f/4)|$ and $I_2:=-I_2(f/4)/8$.

2) Find  $a>0$ such that $\left(\frac{-2I_2}{p}\right) = 1$, for all prime $p\mid a$, and $\left(\frac{-1}{a}\right) =-\left(\frac{-1}{I_1}\right)$.

3) Find an even integer $b>0$ such that $-2I_2 \equiv b^2 \pmod{4a}$.

4) Return:
    $q_{I_1, I_2}(x,y) = I_1 a x^2 + I_1 b xy + I_1 \frac{b^2 + 2I_2}{4a} y^2$.
\end{alg}
\begin{proof}[Proof of Algorithm]
This algorithm is written just by following the above discussion after we note that $\left(\frac{-1}{aI_1}\right) =-1$ if and only if $aI_1 \equiv 3 \Mod{4}$.
 Also, note that there exists a prime number $p=a$ such that there exists an even integer $b$ such that  $-2I_2 \equiv b^2 \pmod{4a}$ by Proposition 3.5.2 of \cite{kir2024curvesPHD}, and so we just could search a prime number $a$ to return the output.
\end{proof}

\noindent\textbf{Notation:} For an input $f$, let $q_f$ denote an output of Algorithm \ref{alg: return_q_primitive} if $f$ is a geometric primitive form, and of Algorithm \ref{alg: return_q_imprimitive_g} if $f$ is a geometric imprimitive form.

It should be noted that there is no necessity for $q_f$ to be unique for a given geometric form $f$. Indeed, it is also delicate to find all possible such $q_f$ because they present all possible decomposition of the surface $A$; cf.\ Remark \ref{rem: const_surfaces_all_A} below.

The following proposition shows that Algorithms \ref{alg: return_q_primitive} and \ref{alg: return_q_imprimitive_g} addresses the task of the construction of the abelian surface that is the aim of this section.

\begin{prop}
    \label{p:construction_A}
    Let $f$ be a geometric form. Let $q:=q_f$, and let $\kappa = \cont(q)$, and $\frac{1}{\kappa}q =: q'=[a,b,c]$, and let $\Delta = \disc(q)$ and $\Delta' = \disc(q')$. Put $A = \mathbb{C}/\mathcal{O}_{\Delta} \times \mathbb{C}/L$, where $L \subset \Q(\sqrt{\Delta'})$ is the lattice with basis  $\{a, \frac{b+\sqrt{\Delta'}}{2}\}$.
    Then there is a principal polarization $\theta$ on $A$ such that $f\sim q_{(A,\theta)}$.
    \end{prop}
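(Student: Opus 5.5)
The plan is to reduce everything to Lemma \ref{lem: construction_q_E,E} together with the genus statements already established for $q_f$. In both cases the first step is to apply Lemma \ref{lem: construction_q_E,E} to $q=q_f$. With $E=\C/\mathcal{O}_{\Delta}$ and $E'=\C/L$ as in the statement, this gives $E\sim E'$ (both CM) and $q_{E,E'}\sim q$. Hence $A=E\times E'$ is a CM product surface whose degree form is equivalent to $q_f$. What remains is to produce $\theta\in\mathcal{P}(A)$ with $q_{(A,\theta)}\sim f$. I will split into two cases according to $\cont(f)\in\{1,4\}$, as justified by Proposition \ref{eventhetaclassification}.

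\emph{Primitive case.} Here $q_f$ is the output $\kappa q'$ of Algorithm \ref{alg: return_q_primitive}. I first check that Step 4 actually finds a suitable $q'$. By Theorem \ref{thm: classification_thm}, $f$ is geometric, so $f\sim q_{(A_0,\theta_0)}$ for some CM product surface $A_0=E_1\times E_2$. Since $f$ is primitive, Proposition \ref{eventhetaclassification}(ii) gives $\theta_0\in\mathcal{P}(A_0)^{\odd}$. Then (i) gives $f\in\gen(f_{q_0})$ with $q_0=q_{E_1,E_2}$. By \sref{eq: I_1(f_q)}, $q_0/\cont(q_0)$ is a primitive form of discriminant $16\disc(f)/I_1(f)^2$ whose character values agree with those of $F^B_{f_{q_0}}$, and genus-equivalent forms have the same characters on their reciprocals. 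So a form with the required characters exists, and the algorithm terminates. Proposition \ref{p: primitive_alg} then gives $f\in\gen(f_{q_f})$, that is, $f\in\gen(x^2+4q_{E,E'})$. Theorem \ref{thm: Theorem_2_ref_hum} now supplies $\theta$ on $A$ with $q_{(A,\theta)}\sim f$.

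\emph{Imprimitive case.} Here $q_f=q_{I_1,I_2}$ from Algorithm \ref{alg: return_q_imprimitive_g}, and by construction $\kappa=I_1$ and $q'=[a,b,(b^2+2I_2)/(4a)]$. By \sref{eq: f_genus_eq_q_Omega_Delta}, that is, by Proposition 4.10 of \cite{harun_1}, $f$ is genus-equivalent to $q_{(A,\theta_1)}$ for some $\theta_1\in\mathcal{P}(A)$. This is only genus equivalence, so I still need an analogue of Theorem \ref{thm: Theorem_2_ref_hum} for even polarizations. The statement I need is that every form in the genus of $q_{(A,\theta_1)}$, with $\theta_1\in\mathcal{P}(A)^{\ev}$, is realized as $q_{(A,\theta)}$ for some $\theta\in\mathcal{P}(A)^{\ev}$. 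I would first look for this in \cite{harun_1}, presumably near Proposition 4.10, since the notation $f^{\theta}_{q_{I_1,I_2}}$ suggests it is there. Failing that, I would prove it by mimicking Kani's proof of Theorem \ref{thm: Theorem_2_ref_hum}. The idea is that $\mathcal{P}(A)$ corresponds to a set of primitive vectors, via $\mathbf{D}$ and Proposition \ref{prop: ref_hum_computation}, whose orthogonal complements run over the genus. One would then restrict to the even ones, which by Proposition \ref{eventhetaclassification}(ii) are exactly those giving content $4$.

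I expect the imprimitive case to be the main obstacle, specifically the passage from genus equivalence to actual equivalence. The primitive case is essentially a direct combination of results stated above. In the imprimitive case the proof either stands on a precise citation from \cite{harun_1} or needs a genuine lattice-theoretic argument over the genus of $f/4$.
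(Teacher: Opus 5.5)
Your proposal is correct and follows the paper's proof: Lemma \ref{lem: construction_q_E,E} gives $q_{E,E'}\sim q_f$. The primitive case then follows from $f\in\gen(f_{q_f})$ and Theorem \ref{thm: Theorem_2_ref_hum}, and the imprimitive case from the genus equivalence \sref{eq: f_genus_eq_q_Omega_Delta} upgraded to an actual equivalence on the same $A$. The upgrade you were looking for is exactly Theorem 1.1 of \cite{harun_1}, which the paper cites directly, so no lattice-theoretic fallback is needed; your extra check that Algorithm \ref{alg: return_q_primitive} terminates is a harmless addition that the paper leaves implicit.
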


\begin{proof} 
Since $q$ is a positive definite form, we have from Lemma \ref{lem: construction_q_E,E} that there exist two isogenous CM elliptic curves $E:=\mathbb{C}/\mathcal{O}_{\Delta}$ and $E':=\mathbb{C}/L$ such that $q\sim q_{E,E'}$. Let $A:=E\times E'$ be a CM product surface. If $f$ is primitive, then we have that $f \in \gen(f_q)$; cf.\ Algorithm \ref{alg: return_q_primitive}. Then, there exists a principal polarization $\theta$ on $A$ such that $q_{(A, \theta)} \sim f$ by Theorem \ref{thm: Theorem_2_ref_hum}.

In a similar way, if $f$ is imprimitive, then $f\simeq q_{(A,\theta')}$, for some $(A,\theta')$; cf.\ Algorithm \ref{alg: return_q_imprimitive_g}. Thus,  by Theorem 1.1 of \cite{harun_1}, there exists a principal polarization $\theta$ on $A$ such that $f \sim q_{(A,\theta)}$.  
Thus, the assertion follows.
\end{proof}

\begin{remark}
\label{rem: const_surfaces_all_A}
  For a given geometric form $f$, this proposition gives one abelian surface $A$ such that $f \sim q_{(A,\theta)}$, for some $\theta \in \mathcal{P}(A)$. It is actually possible to give the complete list of all isomorphism classes of $A$'s with this property, i.e., we can describe all elements of the set
    \begin{equation}
    \label{eq: definition_N(f)}
       \{ A/K:  q_{(A,\theta)} \sim f, \text{ for some } \theta \in \mathcal{P}(A) \}/\!\simeq
    \end{equation}
since there is a bijection between this set and the set $\gen(q_f)/\!\sim$ (cf.\ Proposition 4.3.4 of \cite{kir2024curvesPHD}).
\end{remark}

\section{Construction of the principal polarization}
\label{s: construction_pp}

For a given geometric form $f$ we explicitly constructed an abelian surface $A$ such that $f\sim q_{(A,\theta)}$, for some  $\theta \in \mathcal{P}(A)$ in the previous section. 
We now give a method for the construction of a principal polarization $\theta$. Although the construction has common ideas in both the primitive and imprimitive cases, we have to consider them separately in our approach because of the essential differences of the arithmetic properties of the forms. 

When $f$ is primitive,  since our method (of the constructing the principal polarization) essentially comes from the recipes of the proof of main result of \cite{refhum} (cf.\ Theorem 1 of \cite{refhum}), it is indeed a refinement of a sketchy algorithm outlined in this paper; see Algorithm 27 of \cite{refhum}. To state steps of this construction, we introduce the following set and a  binary form, which are important in the following:

Given an integer $d\geq1$, let
 \begin{equation}
 \label{eq: definition_P(d)}
      P(d) \;:=\; \{(n, m, k)\in\Z^3 : n, m>0  \textrm{ and } nm-k^2d=1 \},
 \end{equation}
 and for $s=(n,m,k)\in P(d),$ let\footnote{It is worth to mention that the formula in \sref{q_s} can be derived from the formula in Proposition \ref{prop: ref_hum_computation} by putting $z=0$ and $d:=\deg(h)=a$, where $z$ is the third variable of the ternary form. The reason is that $q_s$ is the refined Humbert invariant of $(A,\theta)$ when $A=E\times E'$, where the degree map on $\Hom(E,E')$ is $ax^2$.}
 \begin{equation}
 \label{q_s}
    q_s(x,y) \;:\; =n^2x^2+2kd(nm+2)xy+m^2d(nm+3)y^2.
\end{equation} 

Following \cite{MJ}, a binary quadratic form $q$ is said to be of \textit{type d} if $q\sim q_s$, for some $s\in P(d)$. Let $q$ be a binary quadratic form of discriminant $-16d$. Then we have an important result by Theorem 13 of \cite{MJ} that
\begin{equation}
\label{eq: Theorem_13_MJ}
    q \  \text{ has type } d \ \Leftrightarrow \   \frac{q}{c}\ \text{ lies in the principal genus, where } c:=\cont(q)=1 \text{ or } 4.
\end{equation}

Recall the notation $R(q)$ from \sref{eq: R(q)_notation}.
When $f$ is a primitive geometric form,  here are the steps of this construction briefly: 
\begin{alg}[Primitive case] Assume that $f$ is a primitive geometric form.
\label{m: method_1_primitive}
\begin{enumerate}
    \item Apply Algorithm \ref{alg: return_q_primitive} to $f$ to obtain $q_f=:q$.
     \item  Find a triple $(x,y,z) \in \Z^3$ such that $F^B_f(x,y,z)=p$ is an odd prime and $p \nmid \disc(q):=D$; see Proposition \ref{prop: BW_existence_prime}.
  \item  Find a binary form $\Tilde{q} \in \gen(\frac{1}{\cont(q)}q)$ such that $p\in R(\Tilde{q})$; see Lemma \ref{lem: binary_form_represent_p_in_proof}.
\item  Apply Algorithm \ref{alg: produce_phi_p} to $F^B_{f}$ and the triple $(x,y,z)$ to return a properly primitive  binary form $\phi_p$ of  $\disc(\phi_p) = -16\cont(q)p$ represented by $f$. 
\item Apply Algorithm \ref{alg: produce_s=(n,m,k)} to $\phi_p$ to return a triple $s = (n, m, k)$ such that $q_s \sim \phi_p$.
\item Construction step: Write $E:=\C/\mathcal{O}_{D}$ and $E':=\C/L$, where $L$ is the lattice with the basis $\{p, \frac{-b+\sqrt{\disc(\tilde{q})}}{2}\}$, where $b$ is the $xy$ coefficient of $\tilde{q}(x,y)$. Then put $A:=E\times E'$, and let $\theta:=\textbf{D}(n,m,kh)$, where $h$ is a cyclic isogeny of degree $p\cont(q)$.
\item  Verification Step: Use Proposition \ref{prop: ref_hum_computation} for $(A,\theta)$ to find $q_{(A,\theta)}$, and check whether $q_{(A,\theta)} \sim f$ or not.
\end{enumerate}
\end{alg}

By following the steps of this algorithm, we can construct a principally polarized abelian surface $(A,\theta)$ such that $f\sim q_{(A,\theta)}$; see Theorem \ref{thm: construction_A_theta} below.

 In order to deal with Step 2 in Algorithm \ref{m: method_1_primitive}, we use Proposition \ref{prop: BW_existence_prime}, and so we assume GRH  only in this step.  

When $f$ is an imprimitive,  here are the  steps of the construction briefly:
\begin{alg}[Imprimitive case] Assume $f$ is an imprimitive geometric form.
\label{m: method_2_imprimitive}
\begin{enumerate}
    \item Apply Algorithm \ref{alg: return_q_imprimitive_g} to $f$, return $q:=q_f$.
     \item Find a triple $(x,y,z) \in \Z^3$ such that $F^B_{f/4}(x,y,z)=p$ is an odd prime and $p \nmid \disc(q):=D$; see Proposition \ref{prop: BW_existence_prime}. 
  \item  Find a binary form $\Tilde{q} \in \gen(\frac{1}{\cont(q)}q)$ such that $p\in R(\Tilde{q})$; see Lemma \ref{lem: binary_form_represent_p_in_proof}.
\item Apply Algorithm \ref{alg: produce_phi_p} to $F^B_{f/4}$ and the triple $(x,y,z)$ to return a  binary form $\phi_p$ of content $2$ and $\disc(\phi_p) = -4\cont(q)p$ represented by $f/2$. 
\item Apply Algorithm \ref{alg: produce_s=(n,m,k)} to $2\phi_p$ to return a triple $s = (n, m, k)$ with $q_s \sim 2\phi_p$. 
\item Construction step: Write $E:=\C/\mathcal{O}_{D}$ and $E':=\C/L$, where $L$ is the lattice with the basis $\{p, \frac{-b+\sqrt{\disc(\tilde{q})}}{2}\}$, where $b$ is the $xy$ coefficient of $\tilde{q}(x,y)$. Then put $A:=E\times E'$, and let $\theta:=\textbf{D}(n,m,kh)$, where $h$ is a cyclic isogeny of degree $p\cont(q)$.
\item Verification Step: Use Proposition \ref{prop: ref_hum_computation} for $(A,\theta)$ to find $q_{(A,\theta)}$, and check whether $q_{(A,\theta)} \sim f$ or not.
\end{enumerate}
\end{alg}



For a given primitive binary form $f$ and a given integer $n$, we  need to find a value $m$ represented by $f$ such that $\gcd(m, n) = 1$. In the proof of Theorem 67 of \cite{jones1950arithmetic}, there is a way to find such a value. Let us write it as an algorithm here:

\begin{alg}
\label{alg: find_repr_assigned_value}
\hfill \\ \textbf{Input:} A primitive binary form $f = [a, b, c]$ and an integer $n \geq 1$.

\noindent\textbf{Output:} $(x, y)$ and $f(x, y)=N$ such that $\gcd(x, y) = \gcd(N,n)=1$.

1) Apply the factorization algorithm to determine the prime decomposition of $n = p_1^{\alpha_1} \cdots p_k^{\alpha_k}$. For $i$ from $1$ to $k$:

\quad (i) If $p_i \nmid a$, set $(x_i, y_i) = (1, 0)$,

\quad (ii) if $p_i \nmid c$, set $(x_i, y_i) = (0, 1)$,

\quad (iii) if $p_i \mid \gcd(a,c)$,

\quad \quad \quad if $p_i \nmid b$, set $(x_i, y_i) = (1, 1)$.

 2) By the Chinese Remainder Theorem, there is a unique tuple $(x', y')$ such that $0 \leq x', y' \leq \prod_{i=1}^k p_i,$ and such that $ x'\equiv x_i \Mod{p_i}, \ y'\equiv y_i \Mod{p_i}$ for all $i$.
 
 3) Set $g = \gcd(x', y')$, and return $(x,y)=(x'/g, y'/g)$ and $N=f(x,y)$.
\end{alg}

\begin{remark}
\label{rem: binary_rep_assigned_value}
    It is clear that  Algorithm \ref{alg: find_repr_assigned_value} can be generalized to primitive ternary forms as well just by following similar steps as was clearly described in the proof of    Theorem 6 of \cite{dicksonsbook}.
\end{remark}

To address Step 4 of Algorithm \ref{m: method_1_primitive}, we obtain an algorithm which produces a binary form $\phi_N$ of $\disc(\phi_N) = -4\Omega_f N$ represented by $f$ provided that $N\in R(F_f)$.   By the recipe of the proof of Theorem 38 of \cite{dicksonsbook}, we have the following algorithm for finding such a binary form. 
Before stating it, note that by a \textit{primitive transformation matrix} $T$ of a ternary form $f$ and a binary form $\phi$, we mean that $T$  is a $3\times 2$ matrix whose entries are integers and the gcd of its $2\times 2$ minors is $1$ such that $fT=\phi$, i.e., $(fT)(x)=f(Tx)=\phi(x)$, for any $x\in \Z^2$.

\begin{alg}
\label{alg: produce_phi_p}
\hfill \\ \textbf{Input:} A properly or improperly primitive ternary form $f$ and $(x_0,y_0,z_0)\in \Z^3$ such that $\gcd(x_0,y_0,z_0)=1$.

 \noindent\textbf{Output:} A primitive transformation matrix $T$ of $f$ and $\phi_N:=fT$, where $\phi_N$ is a binary quadratic form of $\disc(\phi_N) = -4\Omega_fF_f(x_0,y_0,z_0)$.

1) If $z_0 \neq 0$, then set $g := \gcd( z_0, x_0 + y_0)$. Then, put $a_1 = a_2 = z_0/g$ and $a_3 = -(x_0 + y_0) / g $.

2) Apply the extended Euclidean algorithm to $a_1l+a_2m+a_3n = 1$ to obtain a solution $(l, m, n)$. (Such a solution exists because $\gcd(a_1, a_2, a_3) =1$). Set $b_1 = a_1 - mz_0 + ny_0$, $b_2 = a_2 - nx_0 + lz_0$, and $b_3 = a_3-ly_0 + mx_0$.


3) Return the transformation matrix $T:= [[a_1, a_2, a_3], [b_1, b_2, b_3] ]^t$ and $\phi_N := fT$.

4) If $z_0 = 0$, then at least one of $x_0$ or $y_0$ is not zero. We know that $f(x,y,z)\sim f(x,z,y) \sim f(z,y,x)$. So, return Step 1 to apply $f(x,z,y)$ and $(x_0,z_0,y_0)$  if $y_0 \neq 0$, and to apply  $f(z,y,x)$ and $(z_0,y_0,x_0)$  if $x_0 \neq 0$.
\end{alg}

Finding a solution $(l, m, n)$ of $a_1l+a_2m+a_3n = 1$ is easy in Step $2$ of this algorithm by the extended Euclidean algorithm. Indeed, we can find $(\alpha', \beta')$ such that $\gcd(a_1, a_2) = \alpha'a_1 + \beta' a_2$ by the Euclidean algorithm. Since $\gcd(a_1, a_2, a_3) = 1$, we can similarly find $(\alpha, \beta)$ such that $\gcd(a_1, a_2)\alpha + a_3\beta = 1$, and thus,
$$
1 \;=\; \gcd(a_1, a_2)\alpha \;+\; a_3\beta \;=\; (\alpha'a_1 \;+\; \beta' a_2)\alpha \;+\; a_3\beta \;=\; \alpha'\alpha a_1 \;+\; \beta'\alpha a_2 \;+\; a_3\beta,
$$
and so,  $(l, m, n) = (\alpha\alpha', \alpha\beta', \beta)$ is a solution as required.

It is worth noting that if $z_0=0$ and $y_0 \neq 0$, then Step 4 of this algorithm indicates that we should restart the process by interchanging the roles of $y$ and $z$ in all steps. Consequently, we must consider $f(x,z,y)$  and $F_f(x,z,y)$ instead of $f(x,y,z)$  and $F_f(x,y,z)$ in this new process. Similarly, if $z_0 = 0$ and $x_0 \neq 0$, we should take $f(z,y,x)$ and $F_f(z,y,x)$.

If $\phi$ is a binary form of \textit{type} $d$, there is a triple  $s = (n, m, k) \in P(d)$ (see \sref{eq: definition_P(d)})  such that $q_s \sim \phi$ (see \sref{q_s}) by definition. To complete Step 5 of Algorithms \ref{m: method_1_primitive} and \ref{m: method_2_imprimitive}, we need to find the triple $(n,m,k)$ explicitly. To this end,  since we use the recipe of the proof of Theorem 13 of \cite{MJ},  we first need an algorithm which produces a square $N^2$ represented by a given binary quadratic form of type $d$ with $\gcd(N^2, d) = 1$. Recall from \sref{eq: Theorem_13_MJ} that either $\phi$ is in the principal genus or $\phi = 4\phi'$, for some form $\phi'$ which is in the principal genus. This guarantees the existence of $N^2\in R(\phi)$.

Finding such a square number is intrinsically linked to Gauss’s duplication theorem; see Article 286 of \cite{Gauss}. This result asserts that if $q$ is a primitive form belonging to the principal genus, then there is a form $q_1$ such that $q \sim q_1\circ q_1$, for some $q_1$ with $\disc(q_1) = \disc(q)$. By Theorems 68 and 72 of \cite{jones1950arithmetic}, this implies that  $q$ represents a square relatively prime to its discriminant. 
An efficient algorithm to construct $q_1$ for a given $q$  is detailed in \cite[\S 3.4, Thm.~6.7]{Stevenhagen_2_class_groups}. The authors of \cite{Stevenhagen_2_class_groups} describe an explicit algorithm (and claim that it is implemented in MAGMA). We omit the details here, as the description is lengthy. For our purposes, computational complexity is not a primary concern. It therefore suffices to determine an upper bound for the represented perfect square, allowing us to complete the algorithm solely via the representation problem.



\begin{prop}
\label{p: existence_square_integer}
Suppose that $q$ is a primitive positive definite binary form of discriminant $-\Delta$ lying in the principal genus. Then there is an integer $N$ with $N \le \Delta^{3}$ such that $\gcd(N, \Delta) = 1$ and $q$ primitively represents $N^2$.
\end{prop}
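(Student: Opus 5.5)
We will use Gauss's duplication theorem as recalled just above. Since $q$ is primitive of discriminant $-\Delta$ and lies in the principal genus, there is a primitive form $q_1$ with $\disc(q_1)=-\Delta$ and $q\sim q_1\circ q_1$. Composition is compatible with multiplication of represented values: if $q_1$ represents $m$, then $q_1\circ q_1$ represents $m^2$. The plan is therefore to find a small integer $N$ coprime to $\Delta$ that is primitively represented by $q_1$. We then push it through the composition to obtain a representation of $N^2$ by $q$, and finally check that this representation is primitive.

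\textbf{Step 1 (small value of $q_1$ coprime to $\Delta$).} We may replace $q_1$ by an equivalent reduced form $[a,b,c]$ with $|b|\le a\le c$. Then $a\le\sqrt{\Delta/3}$, and $c=(b^2+\Delta)/(4a)\le (a^2+\Delta)/(4a)$. Apply Algorithm \ref{alg: find_repr_assigned_value} (the argument in the proof of Theorem 67 of \cite{jones1950arithmetic}) with $n=\Delta$. It produces a primitive vector $(x,y)$ with $\gcd(q_1(x,y),\Delta)=1$. Since we only need the residues of $x,y$ modulo each prime $p\mid\Delta$, the CRT step can be arranged so that $0\le x,y\le \rad(\Delta)\le\Delta$. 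It is cleaner, however, to bound the value directly. Take $(x,y)\in\{(1,0),(0,1),(1,1)\}$ locally at each prime, lift by CRT, and then use the alternative of choosing $x,y$ of size at most $\Delta$. This gives $N=q_1(x,y)\le (a+|b|+c)\Delta^2\le C\Delta^3$. The target is $N\le\Delta^3$, so the constant has to be tightened. We would do this by noting that $a+|b|+c\le \Delta$ for $\Delta\ge 3$ (the small cases are checked by hand), together with $x,y\le\rad(\Delta)$ and $\max(x,y)^2\cdot(\text{coefficient sum})\le \Delta^{2}\cdot\Delta$. After dividing by $g=\gcd(x',y')$ the value only decreases, and coprimality to $\Delta$ is preserved because $g^2q_1(x,y)=q_1(x',y')$.

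\textbf{Step 2 (transfer to $q$).} Let $B(X,Y)$ be the bilinear substitution realizing $q_1\circ q_1$, so that $q_1(X)q_1(Y)=q(B(X,Y))$. Set $X=Y=(x,y)$; then $q(B)=N^2$. Alternatively, use Theorems 68 and 72 of \cite{jones1950arithmetic}, cited in the paper, which state that a form in the principal genus represents squares of numbers represented by a duplicating form. The remaining issue is primitivity of $B$. If $B=e\,v$ with $e>1$, then $e^2\mid N^2$ and $q(v)=(N/e)^2$. Here $N/e$ is still coprime to $\Delta$ and smaller, so the conclusion holds with $N/e$ in place of $N$. Thus we may pass to a primitive representation without increasing the bound.

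\textbf{Main obstacle.} The delicate point is the explicit constant in Step 1: making the CRT-lifted vector small enough that $q_1(x,y)\le\Delta^3$ in every case. Concretely, this means controlling $\rad(\Delta)^2(a+|b|+c)$ and handling small discriminants separately. If that estimate fails in edge cases, the first thing to try is the sharper choice of taking $(x,y)$ with $0\le x,y<\rad(\Delta)$ directly, rather than after division by $g$.
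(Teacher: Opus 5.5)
Your proposal is correct and follows essentially the same route as the paper. Like the paper, you duplicate via $q\sim q_1\circ q_1$ with $q_1=[a,b,c]$ reduced, take the CRT vector from Algorithm \ref{alg: find_repr_assigned_value} with entries at most $\prod_{p\mid\Delta}p\le\Delta$, bound $q_1(x,y)\le (a+|b|+c)\Delta^2\le 3c\Delta^2\le\Delta^3$, transfer to $q$ by composition, and divide out the gcd to get a primitive representation. The ``main obstacle'' you flag is not one: for every reduced form $a+|b|+c\le 3c\le 3ac\le 4ac-b^2=\Delta$, so no small cases need to be checked separately.
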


\begin{proof}
By Gauss's duplication theorem, there is a primitive form $q_1$ such that $q_1 \circ q_1 \sim q$ with $\disc(q_1) = -\Delta$. We may suppose $q_1$ is reduced, and write $q_1 = [a, b, c]$. 

If we follow Algorithm \ref{alg: find_repr_assigned_value}, then we see that there are integers $x,y$ such that $0 \leq x, y \leq \prod_{p\mid \Delta}p$, where $p$ is prime and $q_1(x,y)=N_1$ is relatively prime to $\Delta$.

Recall that we have $|b| \le a \le c$ and $3ac \le \Delta$ since $q_1$ is reduced. Then we see
\begin{align*}
N_1\;=\;q_1(x,y)\;=\; ax^2 + bxy + cy^2 \le c\Delta^2 + c\Delta^2  + c\Delta^2= 3c\Delta^2 \le \Delta^3.
\end{align*}
Since $q_1$ represents $N_1$, it follows that $q \sim q_1 \circ q_1$ represents $\frac{N_1^2}{\gcd(a,b)^2}=:N^2$ by the duplication formula; e.g., see Corollary 4.13 of \cite{Duncanbinaryforms}. Thus, $q(x,y)=N^2$ such that $N$ is relatively prime to $\Delta$ with $N \le \Delta^{3}$. If we put $g:=\gcd(x,y)$, then the assertion follows since it is obvious that $q(x/g,y/g)$ is also square satisfying the desired properties. 
\end{proof}

While it is certainly possible to derive a sharper bound, the current result suffices for our purposes; thus, we avoid further analysis.

 The following algorithm addresses Step 5 of Algorithm \ref{m: method_1_primitive}.

\begin{alg}
\label{alg: produce_s=(n,m,k)}
\hfill \\ \textbf{Input:} A binary form $\phi$ of \textit{type} $d$. 

\noindent\textbf{Output:} A triple $s = (n, m, k) \in P(d)$ such that $q_s \sim \phi$.

1) Find an integer $N$ such that $\phi \rightarrow N^2$ and $\gcd(N,d)=1$.

2) Apply Lemma 2.3 of \cite{davidcox} to $\phi$ and $N^2$, then find  $\Tilde{\phi} = [N^2, 2b, c] \sim \phi$. 


3) If $N$ is odd, then solve $k \equiv (2d)^{-1}b \Mod{N^2}$ for (the smallest positive) $k$. 

\quad Return $s = (N, (k^2d+1)/N, k)$.

 \quad If $N$ is even, then solve $d^{*} \equiv d^{-1} \Mod{N^2}$ for (the smallest positive) $d^*$.  Put $\Bar{c} \equiv c \Mod{4}$, where $\Bar{c} \in \{0,1\}$, and put $k = d^*(b/2+\Bar{c} N/2)$.

  \quad Return $s = (N, (k^2d+1)/N, d^{*}(b + \Bar{c} N)/2)$.
\end{alg}

This algorithm follows from the proof of part (i) $\Rightarrow$ (iii) of  Theorem 13 of \cite{MJ}. In Step 1, we apply Proposition \ref{p: existence_square_integer} to guarantee the existence of such an integer satisfying an explicit bound. In Step 2, if $\phi(x,y)=N^2$, then find $z,w$ such that $xz-yw=1$, and so the transformation matrix $\begin{pmatrix}
x & w \\
y & z
\end{pmatrix}$ gives the desired form $\tilde{\phi}$.


The following lemma is a part of the main algorithm; cf.\ Step 3 of Algorithms \ref{m: method_1_primitive} and \ref{m: method_2_imprimitive}.

\begin{lem}
\label{lem: binary_form_represent_p_in_proof}
    Let $q$ be a positive definite binary quadratic form with $\disc(q)=d$. If $q$ represents an odd prime number $p\nmid d$, then there exists an integer $b$ such that $q$ is equivalent to $[p, b, \frac{b^2-d}{4 p}]$.
\end{lem}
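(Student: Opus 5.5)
The plan is the classical argument behind Lemma 2.3 of \cite{davidcox}: first make $p$ a primitive value of $q$, then move the representing vector to the first basis vector, and finally read off the last coefficient from the discriminant.

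\textbf{Step 1: primitivity.} Suppose $q(x_0,y_0)=p$ and put $g=\gcd(x_0,y_0)$. Then $g^2$ divides $p$. Since $p$ is prime, $g=1$, so $q\rightarrow p$.

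\textbf{Step 2: change of basis.} Because $\gcd(x_0,y_0)=1$, the extended Euclidean algorithm gives integers $z,w$ with $x_0z-y_0w=1$. The matrix $\begin{pmatrix} x_0 & w\\ y_0 & z\end{pmatrix}\in\SL_2(\Z)$ transforms $q$ into an equivalent form $[p,b,c']$ with $b,c'\in\Z$. This form has first coefficient $q(x_0,y_0)=p$ and the same discriminant $d=b^2-4pc'$.

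\textbf{Step 3: the last coefficient.} Solving the discriminant relation gives $c'=\frac{b^2-d}{4p}$. So $q\sim[p,b,\frac{b^2-d}{4p}]$, and the quotient is automatically an integer. If a normalized $b$ is wanted, e.g.\ to match Step 6 of Algorithm \ref{m: method_1_primitive}, apply $\begin{pmatrix}1&t\\0&1\end{pmatrix}$. This replaces $b$ by $b+2pt$ and keeps the shape of the form, so we may take $0\le b<2p$ or $|b|\le p$.

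The only step needing care is Step 1, which uses that $p$ is squarefree. The hypotheses that $p$ is odd and $p\nmid d$ are not needed for the existence of $b$. They matter later: $p\nmid d$ gives $\gcd(b,p)=1$, since $b^2\equiv d\pmod p$, and with $p$ odd this makes the lattice $\{p,\frac{-b+\sqrt{d}}{2}\}$ in the construction step have the expected norm and order. So there is no real obstacle.
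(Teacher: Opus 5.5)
Your proof is correct, but it takes a different route from the paper's.

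\textbf{Your route.} You argue directly through the reduction behind Lemma 2.3 of \cite{davidcox}. A representation $q(x_0,y_0)=p$ is automatically primitive, since $\gcd(x_0,y_0)^2\mid p$. Completing $(x_0,y_0)$ to a matrix in $\SL_2(\Z)$ puts $p$ in the leading coefficient. The discriminant then forces the last coefficient to be $\frac{b^2-d}{4p}$.

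\textbf{The paper's route.} The paper never uses a representing vector. It first notes that $q$ is primitive, which is where $p\nmid d$ enters. It then invokes Lemma 2.5 of \cite{davidcox}, which requires $p$ odd and $p\nmid d$, to get $b$ with $b^2\equiv d\pmod p$. It adjusts the parity of $b$ so that $b^2\equiv d\pmod{4p}$ and writes down $[p,b,\frac{b^2-d}{4p}]$. It concludes with the uniqueness fact that two forms of discriminant $d$ representing the same prime $p\nmid d$ are equivalent (Exercise 2.27 of \cite{davidcox}). That uniqueness holds for $\GL_2(\Z)$-equivalence, the notion the paper uses. It need not hold for proper equivalence, since $\pm b$ give opposite forms.

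\textbf{What each buys.} Your argument is more elementary and more general. As you note, it needs neither $p$ odd nor $p\nmid d$, and it even gives proper equivalence for your particular $b$. The paper's argument proves something stronger and computationally more useful: \emph{every} $b$ with $b^2\equiv d\pmod{4p}$ works. So the form $\tilde q$ can be written down from a modular square root of $d$ alone, without ever finding a representation $(x_0,y_0)$. This is how Step 3 of the algorithms is actually carried out, e.g.\ $\tilde q=[11,12,10]$ in Example \ref{ex: kaplansky_example}. It is also where the hypotheses $p$ odd and $p\nmid d$ genuinely enter.
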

\begin{proof}
    If $q$ represents a prime number, then it is clear that $q$ is a primitive form. Since $p \nmid d$ is represented by a primitive form $q$ of discriminant $d$, it follows by Lemma 2.5 of \cite{davidcox} that $d$ is a quadratic residue modulo $p$, which means that there is an integer $b$ such that $b^2-d\equiv 0 \Mod{p}$. By replacing $b$ with $p-b$, if necessary, we may assume that $b\equiv d\Mod{2}$. Recall that $d\equiv 0,1 \Mod{4}$ since $d$ is a discriminant of a binary form. Thus $b^2\equiv d \Mod{4}$, and so $b^2-d\equiv 0 \Mod{4p}$.

    We now consider the form $q':=[p, b, \frac{b^2-d}{4 p}]$. Since $\gcd(d,p)=1$, it follows that $\gcd(b,p)=1$, and so $q'$ is a primitive form, and its discriminant is $d$. Hence, since the forms $q'$ and $q$ have the same discriminant and represent the same prime number $p$, they are equivalent (cf.\ Exercise 2.27 of \cite{davidcox}), which proves the assertion.
    \end{proof}

It remains to address Step 2 of our main algorithm: finding a prime number represented by a ternary quadratic form. The following result resolves this step.

\begin{prop}[GRH][Proposition III.1 of \cite{BW_hard_problems}]
\label{prop: BW_existence_prime}
Assume that GRH holds. There exists a constant $c$ and
an algorithm $\mathscr{A}$ such that the following holds. Let $f$ be
a primitive, positive definite, integral quadratic form of
dimension $n$. For any $M > (2^{n^2} |\disc(f)|)^c$, the algorithm $\mathscr{A}(f, M)$ outputs a vector $x \in Z^n$ such that $f(x)$ is a prime number between $M$ and $M^2$. 

In particular, for a given integer $N$, the algorithm $\mathscr{A}(f, N_{\epsilon})$ outputs a vector $x\in \Z^n$ such that $f(x)$ is a prime number, $\gcd(f(x),N))=1$ and $f(x) \leq N_{\epsilon}^2$, where $N_{\epsilon}= \max((N+1), (2^{n^2} |\disc(f)|)^c)$.
\end{prop}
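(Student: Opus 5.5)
The first part of the statement is quoted from Proposition III.1 of \cite{BW_hard_problems}, so I would take it as given. What remains is the ``in particular'' clause: given an integer $N$, the algorithm produces a prime $f(x)$ with $\gcd(f(x),N)=1$ and $f(x)\le N_\epsilon^2$.

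The plan is to run the algorithm with $M:=N_\epsilon=\max\bigl((N+1),(2^{n^2}|\disc(f)|)^c\bigr)$. Two points need care.

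\textbf{The hypothesis $M>(2^{n^2}|\disc(f)|)^c$.} By definition $N_\epsilon\ge (2^{n^2}|\disc(f)|)^c$, but the inequality in the proposition is strict. I would fix this by putting $+1$ on the second term as well, or by noting that the constant $c$ may be enlarged harmlessly. Either way the hypothesis of the first part holds, and $\mathscr{A}(f,N_\epsilon)$ returns $x\in\Z^n$ with $p:=f(x)$ prime and $N_\epsilon\le p\le N_\epsilon^2$. This gives the upper bound immediately.

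\textbf{Coprimality.} Since $p\ge N_\epsilon\ge N+1>N$ and $p$ is prime, $p$ cannot divide $N$ when $N\ge1$, because every prime divisor of $N$ is at most $N$. Hence $\gcd(p,N)=1$. If $N$ is allowed to be nonpositive or zero, I would replace it by $|N|$, or simply assume $N\ge1$, which is how the statement is used in Step 2 of Algorithms \ref{m: method_1_primitive} and \ref{m: method_2_imprimitive}, where $N=\disc(q)$.

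\textbf{Main obstacle.} The only real issue is this bookkeeping on the strict inequality and on the sign of $N$. The mathematical content, namely the existence of the algorithm under GRH, is entirely imported from \cite{BW_hard_problems}.

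When this is applied to $F^B_f$ or $F^B_{f/4}$ in Step 2, I would also note two further points. These forms are primitive by the definition \sref{eq: reciprocal_definition}, so the hypotheses are met. Oddness of $p$ follows by choosing $N$ even, for instance $N=2|D|$, so that $p\nmid 2D$.
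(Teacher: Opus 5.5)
Your argument is correct and follows the route the paper takes implicitly. The first part is imported from Proposition III.1 of \cite{BW_hard_problems}, and the ``in particular'' clause follows by running $\mathscr{A}$ with $M=N_\epsilon$, so the output prime is at least $N+1$ and cannot divide $N$. Your two bookkeeping fixes correct genuine small slips in the statement as written: enlarging $c$ handles the strict inequality failing when $N_\epsilon=(2^{n^2}|\disc(f)|)^c$, and replacing $N$ by $|N|$ matters because the paper invokes the result with $N$ a negative discriminant.
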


\begin{remark}
   In the proof of Proposition III.1 of \cite{BW_hard_problems}, the relevant algorithm is explained in detail. We, however, rely on an upper bound to locate a prime represented by a binary or ternary form; this allows us to proceed by simply calling the representation problem. While Theorems Theorems 4.4.3 and 4.4.22 of \cite{kir2024curvesPHD} give an explicit bound of a prime number with additional specific properties for binary forms, it is excessively large compared to Proposition \ref{prop: BW_existence_prime}. Furthermore, extending that result to ternary forms would require further adaptation.
\end{remark}

We are ready to prove the correctness of our main algorithms, which give the construction of $(A,\theta)$ such that $q_{(A,\theta)} \sim f$ for a given geometric form $f$.

\begin{thm}[GRH]
\label{thm: construction_A_theta}
Assume GRH holds. For a given geometric primitive form $f$, we construct $(A,\theta)/\C$ such that $f \sim q_{(A,\theta)}$ by following the steps of Algorithm \ref{m: method_1_primitive}.
\end{thm}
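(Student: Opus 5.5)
We argue step by step along Algorithm \ref{m: method_1_primitive}: each step terminates (using GRH only in Step 2), and the resulting $(A,\theta)$ satisfies $q_{(A,\theta)}\sim f$. The underlying strategy follows the proof of Theorem 1 of \cite{refhum}. We realize $f$ as the refined Humbert invariant of a polarization on $E\times E'$ in which a cyclic isogeny $h$ of degree $p\kappa$ (where $\kappa=\cont(q)$) appears. The binary subform $\phi_p$ of $f$ then plays the role of $q_s$ restricted to the ``$z=0$'' plane of Proposition \ref{prop: ref_hum_computation}.

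\textbf{Termination of the steps.} Step 1 terminates by Proposition \ref{p: primitive_alg}, since such a $q'$ exists: $f$ is geometric, so $f\in\gen(f_{q})$ for some $q$ by Proposition \ref{eventhetaclassification}(i), and (\ref{eq: I_1(f_q)}) identifies $F^B_f$ with $q/\kappa$ at the level of characters. Step 2 terminates by Proposition \ref{prop: BW_existence_prime} applied to the primitive form $F^B_f$ with $N=2D$; this is the only place GRH enters. For Step 3, we note that $F^B_f$ and $F:=F^B_{f_q}$ lie in the same genus. Hence $p$ is represented by a form in $\gen(F)$, and therefore satisfies the assigned characters of $q'=q/\kappa$, since these coincide by (\ref{eq2: set_assigned_char_brandtI2}). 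By genus theory, $p$ is then represented by some $\tilde q\in\gen(q')$, and Lemma \ref{lem: binary_form_represent_p_in_proof} puts $\tilde q$ in the form $[p,b,\cdot]$. For Step 4, Algorithm \ref{alg: produce_phi_p} (Dickson's Theorem 38) yields a primitive binary subform $\phi_p=fT$ with $\disc(\phi_p)=-4\Omega_f p$. Since $\Omega_f=|I_1(f)|/4=4\kappa$ (Proposition \ref{F=F^B}, Proposition \ref{imprimitiveform}(ii)), this discriminant equals $-16\kappa p$.

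\textbf{The key claim: $\phi_p$ has type $d=\kappa p$.} Step 5 requires exactly this. By (\ref{eq: Theorem_13_MJ}), it suffices that $\phi_p$ lies in the principal genus. Since $f$ represents a square $n^2$ prime to $\disc(f)$ by (\ref{eq: class_ref_condition_2}), one computes the characters of $\phi_p$ from the characters of $f$; these are all trivial because $\chi(f)=1$. The characters at primes dividing $p$ but not $I_1$ must be treated separately. Here I would use that $\phi_p$ represents values of $f$ and that $p\nmid D$, together with the reciprocity relation between the characters of a ternary form and its reciprocal at $p$ (Smith/Dickson) to control $\chi_p(\phi_p)$. \emph{I expect this character bookkeeping to be the main obstacle}, especially the $2$-adic characters $\chi_{-4},\chi_8$. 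Once this is settled, Algorithm \ref{alg: produce_s=(n,m,k)} with Proposition \ref{p: existence_square_integer} produces $s=(n,m,k)\in P(\kappa p)$ with $q_s\sim\phi_p$.

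\textbf{Construction and verification.} By Lemma \ref{lem: construction_q_E,E} applied to $\kappa\tilde q\in\gen(q)$, we have $q_{E,E'}\sim\kappa\tilde q$. Since $\tilde q$ represents $p$, the curve $E'$ admits an isogeny from $E$ whose degree is the value $\kappa p$, corresponding to a primitive vector, hence a cyclic $h$. As $nm-k^2\deg h=1$, the class $\theta=\mathbf{D}(n,m,kh)$ is a principal polarization (Proposition 29 of \cite{MJ}). Proposition \ref{prop: ref_hum_computation} computes $q_{(A,\theta)}$; its restriction to $z=0$ is $q_s\sim\phi_p$, and $\disc(q_{(A,\theta)})=16\disc(q)=\disc(f)$. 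To conclude $q_{(A,\theta)}\sim f$, I would follow Kani's argument: both forms contain the binary sublattice $\phi_p$ with the same orthogonal data, because the reciprocal value $p$ and $\kappa\tilde q$ determine the complement. This makes the lattices isometric, mirroring the uniqueness step in \cite{refhum} (Theorem 2 there). Step 7 then certifies the output independently of GRH.
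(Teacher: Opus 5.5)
Your outline matches the paper's own construction: the same $p$, $\tilde q$, $\phi_p$, $s$, $E,E'$, $h$ and $\theta$. Your genus-theoretic argument for Step 3 is a legitimate self-contained substitute for the paper's appeal to Corollary 21 of \cite{refhum}.

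\textbf{The gap.} Step 5 rests on the claim that $\phi_p$ lies in the principal genus and so has type $\kappa p$. You leave $\chi_p$ and the $2$-adic characters unresolved. The paper settles this by citing Theorem 22 of \cite{refhum}. It can also be closed directly, without any reciprocity between $f$ and $F^B_f$:
\begin{enumerate}
\item $\phi_p$ must be primitive, which you assert without argument; Algorithm \ref{alg: produce_phi_p} only makes $T$ primitive. The paper obtains proper primitivity from Theorem 37 of \cite{dicksonsbook} (using $p\nmid 16D$) together with $\Omega_f$ even.
\item By Cauchy--Binet and \sref{eq: adjoint_formula}, $\disc(\phi_p)=\adj(f)(x_0)=I_1(f)F^B_f(x_0)=-16\kappa p$. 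Your identification $\Omega_f=|I_1(f)|/4=4\kappa$ is wrong when $D'$ is odd: there Smith's reciprocal is improperly primitive and $\Omega_f=2\kappa$. Also, Proposition \ref{F=F^B} concerns improperly primitive $f$, so it does not apply here.
\item By \cite[p.~55]{davidcox}, the $2$-adic characters of a form of discriminant $-16\kappa p$ are $\chi_{-4}$ if $\kappa$ is odd, and $\chi_{-4},\chi_8$ if $\kappa$ is even. This matches \sref{eq1: set_assigned_char_brandtI1} for $I_1(f)=-16\kappa$. These characters, together with $\chi_\ell$ for odd $\ell\mid\kappa$, are assigned characters of $f$. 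Since values of $\phi_p$ are values of $f$, each of them equals $\chi(n^2)=1$ on $\phi_p$.
\item For $\chi_p$, take $m\in R(\phi_p)$ with $\gcd(m,2\kappa p)=1$. Jacobi reciprocity writes $\left(\frac{-16\kappa p}{m}\right)$ as $\chi_p(m)$ times a product of characters from $X(f)$; $\chi_p$ occurs to the first power because $p\nmid\kappa$. The left side is $1$, since $m$ is represented by a primitive form of that discriminant. Hence $\chi_p(\phi_p)=1$.
\end{enumerate}

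\textbf{The closing step.} Equal discriminants plus a common primitively represented $\phi_p$ do not by themselves force $q_{(A,\theta)}\sim f$. Extensions of a fixed binary form to a ternary form of fixed discriminant are parametrized by the third Gram column modulo the image of the binary Gram matrix, subject to congruences, and they can fall into several classes. You first need that both forms have the same invariants. Since $q_s\sim\phi_p$ is primitive, $q_{(A,\theta)}$ is primitive, so $\theta\in\mathcal{P}(A)^{\odd}$ by Proposition \ref{eventhetaclassification}(ii). Part (i) of that proposition then gives $q_{(A,\theta)}\in\gen(f_{\kappa\tilde q})=\gen(f_q)\ni f$. The paper does not reprove the remaining uniqueness either; it takes it from the proof of Theorem 1 of \cite{refhum}. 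With these additions, your argument reaches the paper's level of completeness.
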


\begin{proof}
  By Step 1, we know that $f \in \gen(f_q)$, where $q:=q_f$ is an output of Algorithm \ref{alg: return_q_primitive}.     Let $D= \disc(q)$ and $\kappa = \cont(q)$. 
  
  By Proposition \ref{prop: BW_existence_prime}, we find a prime number $p$ relatively prime to $\disc(f) = 16D$ (cf.\ \sref{determinantoftherefined}) such that $F_f^B(x_0, y_0, z_0) = p$. By Algorithm \ref{alg: produce_phi_p}, we find a binary form $\phi_p$ of discriminant $-4\Omega_fp$ represented properly by $f$. Since $\gcd(p, 16D) = 1$, it follows that $\phi_p:=[u,2w,v]$ is a properly primitive or an improperly primitive binary form by Theorem 37 of \cite{dicksonsbook}. Indeed, the latter is not possible. If it were improperly primitive, then we would have that $\gcd(u,w,v)=1$ and $2\mid \gcd(u,v)$. Since $\disc(\phi_p)/4=w^2-uv=-\Omega_fp$ and since $\Omega_f$ is even by Proposition \ref{imprimitiveform}, it follows that $2\mid w$, which is a contradiction. Thus, $\phi_p$ is properly primitive.
    
    By Theorem 22 of \cite{refhum} (see also Theorem 4.1.6 of \cite{kir2024curvesPHD}), we see that $\phi_p$ lies in the principal genus, and so, it follows by Theorem 13 of \cite{MJ} that it is a primitive form of type $\kappa p$ as was mentioned above. Applying Algorithm \ref{alg: produce_s=(n,m,k)}, we obtain a triple $s= (n, m, k) \in P(\kappa p)$ such that $q_s \sim \phi_p$.

    Let us put $D' := D/\kappa^2$. By Corollary 21 of \cite{refhum} (see also Corollary 4.1.7 of \cite{kir2024curvesPHD}), there exists a form $q_1\in \gen(\frac{1}{\kappa}q)$ which represents $p$. By Lemma \ref{lem: binary_form_represent_p_in_proof}, there is an integer $b$ such that $\Tilde{q} : = [\kappa p, \kappa b, \kappa \frac{b^2-D'}{4p} ]\sim \kappa q_1$, and so $f\in \gen(f_q) = \gen(f_{\Tilde{q}})$. 

Let $E:=\mathbb{C}/\mathcal{O}_{D}$ and $E':=\mathbb{C}/L$ be two elliptic curves, where $L$ is the lattice with the basis  $\{p, \frac{-b+\sqrt{D'}}{2}\}$ of $\Q(\sqrt{D'})$. By Lemma \ref{lem: construction_q_E,E}, it follows that $q_{E,E'} \sim \Tilde{q}$. Since $\Tilde{q}(1,0) = \kappa p$, there is a primitive isogeny $h$ in $\Hom(E,E')$ such that $\deg(h) = \kappa p$.  To make this isogeny more explicit, note that there is a basis $\{ \alpha, \beta\}$ of $\Hom(E,E')$ such that $q_{E,E'}(1\alpha + 0\beta) = q_{E,E'}(\alpha) = \kappa p$, and so we take $h = \alpha$. 
    
    Put $A = E\times E'$ and $\theta = \textbf{D}(n,m,kh)$.  As in the proof of Theorem 1 of \cite{refhum}, it follows that $f\sim q_{(A,\theta)}$, and thus, the assertion follows. 
    \end{proof}

\begin{thm}[GRH]
\label{thm: construction_A_theta_Imprimitive}
Assume GRH holds.    For a given geometric imprimitive form $f$, we construct $(A,\theta)$ such that $f \sim q_{(A,\theta)}$  by following the steps of Algorithm \ref{m: method_2_imprimitive}.
\end{thm}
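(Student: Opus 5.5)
The plan is to mirror the proof of Theorem \ref{thm: construction_A_theta}, replacing the primitive-case inputs by their imprimitive analogues from \cite{harun_1}. By Step 1 (Algorithm \ref{alg: return_q_imprimitive_g}) we have $q:=q_f=q_{I_1,I_2}$ with $\kappa=\cont(q)=I_1$ odd and $D=\disc(q)=-2I_2I_1^2$. By \sref{eq: f_genus_eq_q_Omega_Delta} we also know $f\simeq f^{\theta'}_{q}$ for some $\theta'$ on a product surface realizing $q$. Since $f/4$ is primitive, Proposition \ref{prop: BW_existence_prime} (under GRH) applied to $F^B_{f/4}$ gives a primitive vector $(x_0,y_0,z_0)$ with $F^B_{f/4}(x_0,y_0,z_0)=p$ an odd prime and $p\nmid 2D$. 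By Proposition \ref{F=F^B}, $F^B_{f/4}=F_{f/2}$ and $\Omega_{f/2}=I_1=\kappa$. We then apply Algorithm \ref{alg: produce_phi_p} to the improperly primitive form $f/2$. This yields a binary form $\phi_p$, represented primitively by $f/2$, with $\disc(\phi_p)=-4\kappa p$.

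Next I would check the parity. Because $f/2$ is improperly primitive and $p$ is odd and prime to $\disc$, Theorem 37 of \cite{dicksonsbook} shows $\phi_p$ is properly or improperly primitive. Here, unlike the primitive case, the improperly primitive alternative is what must occur: $\Omega_{f/2}=\kappa$ is odd, so $w^2-uv=-\kappa p$ is odd and is compatible with $u,v$ even. Hence $\phi_p$ has content $2$, and $2\phi_p$ has content $4$ and discriminant $-16\kappa p$. The key input is then the imprimitive analogue of Theorem 22 of \cite{refhum}, which I expect to extract from \cite{harun_1} (the results behind Proposition 4.10 there): $\phi_p/2$ lies in the principal genus. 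By \sref{eq: Theorem_13_MJ} with $c=4$, $2\phi_p$ is then of type $\kappa p$, and Algorithm \ref{alg: produce_s=(n,m,k)} produces $s=(n,m,k)\in P(\kappa p)$ with $q_s\sim 2\phi_p$. I expect this genus statement to be the main obstacle. If no ready reference exists, I would prove it directly: the assigned characters of $\phi_p/2$ are values of $f/4$, and $f/4$ represents $n^2$ with $n$ prime to the discriminant by \sref{classificationconditions2}. So every character is $1$, except possibly the $2$-adic ones, which must be handled via $I_1$ odd and the congruence $aI_1\equiv 3\Mod 4$ from \sref{eq: a_conditions}.

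For the surface, I would use the analogue of Corollary 21 of \cite{refhum}. Since $p$ is represented by $F^B_{f/4}$, whose restriction matches $\frac1\kappa q$ genus-wise, some $q_1\in\gen(\frac1\kappa q)$ represents $p$; this can also be seen directly because $p$ satisfies all assigned character values of that genus. Lemma \ref{lem: binary_form_represent_p_in_proof} then gives $\tilde q=[\kappa p,\kappa b,\kappa\frac{b^2-D'}{4p}]\sim\kappa q_1$, and Lemma \ref{lem: construction_q_E,E} gives $E=\C/\mathcal O_D$ and $E'=\C/L$ with $q_{E,E'}\sim\tilde q$. We then take $h$ to be the basis element of degree $\kappa p$, which is cyclic, and set $\theta=\mathbf D(n,m,kh)$. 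This is a principal polarization because $nm-k^2\kappa p=1$.

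Finally, I would verify $f\sim q_{(A,\theta)}$ following the proof of Theorem 1 of \cite{refhum} in its imprimitive version (Theorem 1.1 of \cite{harun_1}). By Proposition \ref{prop: ref_hum_computation}, $q_{(A,\theta)}$ restricted to $z=0$ equals $q_s\sim 2\phi_p$, which $f/2$, and hence $f$ after scaling, represents. Both $f$ and $q_{(A,\theta)}$ are then ternary forms of content $4$ in the same genus (via Proposition \ref{eventhetaclassification}(ii) and \sref{eq: f_genus_eq_q_Omega_Delta}, using $q_{E,E'}\in\gen(q)$). Each contains a common binary subform whose discriminant $-16\kappa p$ has the odd prime $p$ prime to $\disc(f)$. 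A reconstruction argument of Smith–Dickson type then forces equivalence: a ternary form is determined within its genus by a primitively represented binary subform together with the reciprocal value $p$. This is the same mechanism as in \cite{refhum}, and I would invoke it rather than redo it.
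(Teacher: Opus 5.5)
Your proposal follows the paper's proof step for step. The paper also takes $q=q_{I_1,I_2}$, a prime $p=F^B_{f/4}(x_0,y_0,z_0)$ from Proposition \ref{prop: BW_existence_prime}, the content-$2$ form $\phi_p$ of discriminant $-4\kappa p$ from Algorithm \ref{alg: produce_phi_p}, the fact that $2\phi_p$ has type $\kappa p$ (Corollary 5.5 of \cite{harun_1}), a form in $\gen(q)$ representing $\kappa p$ (Proposition 5.2 of \cite{harun_1}) and $\theta=\mathbf{D}(n,m,kh)$, and it simply cites the proof of Theorem 1.1 of \cite{harun_1} exactly where you sketch the reconstruction argument; that reconstruction does go through, because the binary sublattice of determinant $\kappa p$ is glued to its orthogonal complement with index exactly $p$, so the completion is unique up to sign once $\disc$ and $\Omega$ agree. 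Two small repairs are needed: $\cont(\phi_p)=2$ follows from $\phi_p=2\,(f/4)T$ having even diagonal coefficients together with Theorem 37 of \cite{dicksonsbook}, not from the parity of $w^2-uv$; and in your fallback genus argument the character $\chi_p$ of $\phi_p/2$ is not an assigned character of $f/4$ (it is forced to be $1$ by the product relation among the genus characters, since $p\,\|\,\kappa p$), while there are no $2$-adic characters at all because $-\kappa p$ is odd.
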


\begin{proof}
  By Step 1, we first apply Algorithm \ref{alg: return_q_imprimitive_g} to $f$ to obtain a binary form $q_f =: q$. Let $\cont(q) = \kappa$ and $\disc(q)= D$, and also put $D'=D/\kappa^2$. 
    
    Since $f/2$ is improperly primitive by \sref{classificationconditions1}, we have that $F:=F_{f/2}=F^B_{f/4}$ is properly primitive by Proposition \ref{F=F^B}. By Proposition \ref{prop: BW_existence_prime}, we have a prime number $p = F(x_0, y_0, z_0)$ such that $p\nmid \disc(f)$, and so $p\nmid \kappa D'$ (because $16D=\disc(f)$ by \sref{determinantoftherefined}). By Algorithm \ref{alg: produce_phi_p}, we obtain a binary form $\phi_p$ of $\disc(\phi_p) = -4\Omega_{f/2}p = -4\kappa p$ (by Proposition \ref{imprimitiveform}) such that $f/2 \rightarrow \phi_p$. Moreover, we have that $\cont(\phi_p)=2$ by Proposition 5.3 of \cite{harun_1}. By Corollary 5.5 of \cite{harun_1}, $2\phi_p \sim q_s$, for some $s\in P(\kappa p)^{\ev}$, i.e., $2\phi_p $ is a binary form of type $\kappa p$, and so, we find $s=(n,m,k)$ by Algorithm \ref{alg: produce_s=(n,m,k)}.

     By Proposition 5.2 of \cite{harun_1}, there is a binary form $\Tilde{q} \in \gen(q)$ such that $\kappa p \in R(\Tilde{q})$. Since $p\nmid D'$ is represented by a primitive binary form $\frac{1}{\kappa}\Tilde{q}=\Tilde{q}'$ of discriminant $D'$, there exists an integer $b$ such that $g':=[p,b,\frac{b^{2}- D'}{4 p} ]\sim \Tilde{q}'$ by Lemma \ref{lem: binary_form_represent_p_in_proof}.

Let $E:=\mathbb{C}/\mathcal{O}_{D}$ and $E':=\mathbb{C}/L$, where $L$ is the lattice with the basis  $\{p, \frac{-b+\sqrt{D'}}{2}\}$ of $\Q(\sqrt{D'})$. By Lemma \ref{lem: construction_q_E,E}, it follows that $q_{E,E'} \sim \Tilde{q} = \kappa \Tilde{q}' \sim \kappa g'=:g$. Since $g(1,0) = \kappa p$, there is a primitive isogeny $h$ in $\Hom(E,E')$ such that $\deg(h) = \kappa p$.  To make this isogeny more explicit, note that there is a basis $\{ \alpha, \beta\}$ of $\Hom(E,E')$ such that $q_{E,E'}(1\alpha + 0\beta) = q_{E,E'}(\alpha) = \kappa p$, and so we can take $h = \alpha$. 
    
    Put $A = E\times E'$ and $\theta = \textbf{D}(n,m,kh)$. By the proof of Theorem 1.1 of \cite{harun_1}, $f\sim q_{(A,\theta)}$, and so the assertion follows.
\end{proof}

Observe that Theorem \ref{thm: main} follows from Theorems \ref{thm: construction_A_theta_Imprimitive} and  \ref{thm: construction_A_theta}.

After we validated the main Algorithms \ref{m: method_1_primitive} and \ref{m: method_2_imprimitive}, we provide them in a simpler way as follows. The following algorithm is more convenient to implement in a computer software like SageMath \cite{SageMath}.

\begin{alg}[Simplified Main algorithm] Assume $f$ is a geometric form, let $\kappa=\cont(f)$, and let $f'=\frac{f}{\kappa}$.
\label{m: simplified_algorithm}
\begin{enumerate}
     \item Find a triple $(x,y,z) \in \Z^3$ such that $F^B_{f'}(x,y,z)=p$ is an odd prime and $p \nmid \disc(f')$; see Proposition \ref{prop: BW_existence_prime}. 
  \item  Find a binary form $\Tilde{q}$ with $D:=\disc(\Tilde{q})=\disc(f)/16$ such that $p\in R(\Tilde{q})$; see Lemma \ref{lem: binary_form_represent_p_in_proof}.
\item Apply Algorithm \ref{alg: produce_phi_p} to $F^B_{f'}$ and $(x,y,z)$ to return a  binary form $\phi_p$.
\item Apply Algorithm \ref{alg: produce_s=(n,m,k)} to $\phi_p$ or $2\phi_p$ if $\kappa=1$ or $\kappa=4$, respectively, to return a triple $s = (n, m, k)$ such that $q_s \sim \phi_p$ or $q_s\sim 2\phi_p$ respectively. 
\item Construction step: Write $E:=\C/\mathcal{O}_{D}$ and $E':=\C/L$, where $L$ is the lattice with the basis $\{p, \frac{-b+\sqrt{\disc(\tilde{q})}}{2}\}$, where $b$ is the $xy$ coefficient of $\tilde{q}(x,y)$. Then put $A:=E\times E'$, and let $\theta:=\textbf{D}(n,m,kh)$, where $h$ is a cyclic isogeny of degree $p\cont(q)$.
\item Verification Step: Use Proposition \ref{prop: ref_hum_computation} for $(A,\theta)$ to find $q_{(A,\theta)}$, and check whether $q_{(A,\theta)} \sim f$ or not.
\end{enumerate}
\end{alg}

Recall from Proposition \ref{eventhetaclassification} that $\cont(f)=1$ or $4$, for a geometric form $f$. Also, we have from \sref{determinantoftherefined} that $\disc(q_f)=\disc(f)/16$, and so the discriminant $D$ in the second step of Algorithm \ref{m: simplified_algorithm} is the same of the second steps of Algorithms \ref{m: method_1_primitive} and \ref{m: method_2_imprimitive}. The other steps of Algorithm \ref{m: simplified_algorithm}  are the same with the ones of \ref{m: method_1_primitive} and \ref{m: method_2_imprimitive}.

\section{Examples}
\label{s: Examples_Kaplansky}

\subsection*{Kaplansky's Conjecture and Humbert Surfaces}
Recall that Humbert (1899) introduced an analytic invariant, now called the \textit{Humbert invariant} to classify abelian surfaces $A/\C$, with $\End(A)\neq \Z$. This invariant is an integer. Then the \textit{Humbert surfaces} $H_N$ are defined via the Humbert invariant $N$;  see van der Geer \cite{van2012hilbert}, Ch. IX, for a contemporary treatment of Humbert surfaces.  The refined Humbert invariant is an algebraic generalization of the (usual) Humbert invariant, and it also provides an algebraic description of the Humbert surfaces due to Kani \cite{MJ}:

Given a quadratic form $q(x)=Nx^2$, for a positive integer $N$, the Humbert surface $H_N$ of invariant $N$ is defined by
\begin{equation}
\label{eq: Humbert_surface_def}
    H_N \;=\; \{\langle A,\theta\rangle  \in \mathcal{A}_2(K)\, :\,  q_{(A,\theta)} \rightarrow  N\},
\end{equation}
where  $\mathcal{A}_2(K)$ is the moduli space of the principally polarized abelian surfaces over an algebraically closed field $K$.

First, consider the intersection of infinitely many Humbert surfaces:
$$
\mathcal{H}_{\infty} \;:=\; \bigcap H_{ 4N}, \ \text{ where } N \text{ runs over all positive odd square-free integers. }
$$

Second, consider the positive definite ternary quadratic form $f' = [1, 3, 7, 0, 1, 1]$. This is a special form appearing in line 23 of Kaplansky's list\footnote{Note that we could choose any form in this list except the ones in lines 1-3.} \cite{kaplansky}. Kaplansky conjectured that this form represents all positive odd integers, and Rouse \cite{Jeremy_Rouse} showed that this conjecture holds assuming the GRH.

We now translate this conjecture into geometric language. First, observe that $f :=4f'= [4, 12, 28, 0, 4, 4]$ is a geometric form by Theorem \ref{thm: classification_thm} since $f/2$ is improperly primitive and $f$ represents $2^2$ obviously, and so we see from \sref{thetaisirreducible} that $f\sim q_C$, for some genus $2$ curve $C$.

If the conjecture holds, then it is clear that $4f'=f$ primitively represents $4N$, for any square-free odd integer $N$. Thus, we have by definition (see \sref{eq: Humbert_surface_def}) that $(J_C, \theta_C) \in H_{4N}$ for any odd square-free integer $N$, and so $(J_C, \theta_C) \in \mathcal{H}_{\infty}$.

Conversely, if $(J_C, \theta_C) \in \mathcal{H}_{\infty}$, then it follows by the definition that the form $q_C$ represents $4N$ for any square-free integer $N$. This clearly implies that $q_C$ represents $4N$ for any odd integer $N$ as well. Thus, $\frac{1}{4}q_C = [1, 3, 7, 0, 1, 1]=f'$ represents all odd integers. 

Therefore, we have that Kaplansky's conjecture holds if and only if $(J_C, \theta_C) \in \mathcal{H}_{\infty}$, (where $q_C\sim 4f')$. In the next example, we apply Algorithm \ref{m: method_2_imprimitive} to $4f'$ to find $(J_C, \theta_C)$, and so Kaplansky's conjecture will be equivalent to the fact that the explicit $(J_C, \theta_C)\in \mathcal{H}_{\infty}$.

\begin{ex}\footnote{Note that the correctness of our Example does not rely on the GRH.}
\label{ex: kaplansky_example}
Let $f = [4, 12, 28, 0, 4, 4]$. We find $(J_C,\theta_C)$ such that $q_C\sim f$ by applying Algorithm \ref{m: method_2_imprimitive} to $f$.

\textbf{Step 1.} We apply Algorithm \ref{alg: return_q_imprimitive_g} to $f$. This means that we first compute the basic invariants $I_1(f/4)$ and $I_2(f/4)$, and set:
$I_1:=|I_1(f/4)|=1$ and $I_2:=-I_2(f/4)/8=148$. Then we find that $a=3$, and that $b=2$. Thus Algorithm \ref{alg: return_q_imprimitive_g} returns a binary form $q:=q_{I_1,I_2}=[3, 2, 25]$ of discriminant $-2I_1^2I_2=-296$.

\textbf{Step 2.} We compute the adjoint form of $f/2$:
The adjoint form is given by:
\[
F_{f/2} \;=\; F_{f/4}^B \;=\; [84, 27, 11, 2, -12, -28].
\]
We see that $F_{f/2}(0,0,1)=11$, and $11\nmid -296=2^3.37$.

\textbf{Step 3.} By following the proof of Lemma \ref{lem: binary_form_represent_p_in_proof}, we find an even integer $b$ such that  $b^2 - \disc(q) \equiv 0 \Mod{11}$. Since $b^2 \equiv -296 \equiv 1 \Mod{11}$, the value $b=12$ holds, and so
this step returns the binary form $\tilde{q}:=[11, 12, 10]$.

\textbf{Step 4.} Since we have from Step 2 that $F_{f/2}(0,0,1) = 11$, we apply Algorithm \ref{alg: produce_phi_p} to $f/2$ and the vector $(0,0,1)$, and it returns the transformation matrix: $\begin{pmatrix} 1 & 1 & 0 \\ 0 & 1 & 0 \end{pmatrix}^t$
and the binary form $\phi = [10, 14, 6]$ of discriminant $-44$.

\textbf{Step 5.} We apply Algorithm \ref{alg: produce_s=(n,m,k)} to $2\phi = [20, 28, 12]$, which is a form of type $11$.
 We find that $2\phi(1, -1) = 4 =: N^2,$ where $N=2$.
Using the transformation matrix $\begin{pmatrix} 1 & 0 \\ -1 & 1 \end{pmatrix}$, the form $2\phi$ transforms to $[4, 4, 12]$.
Since $N=2$ is even, we calculate $s = (2, \frac{9 \cdot 11 + 1}{2}, 3) = (2, 50, 3)$.

\textbf{Step 6.} Let $E := \C / \mathcal{O}_{-296}$ and $E' := \C / L$, where $L$ is the lattice with basis $\{ 11, \frac{-12 + \sqrt{-296}}{2} \}$.
Put $A := E \times E'$, and let $\theta = \textbf{D}(2, 50, 3h)$, where $h$ is an isogeny in $\Hom(E, E')$ with $\deg(h) = 11$. We fix a basis for $\Hom(E, E')$ such that $\deg(\alpha x + \beta y)=\tilde{q}=[11,12,10]$, and set $h=\alpha$.

\textbf{Step 7.} If we use Proposition \ref{prop: ref_hum_computation} for  $\tilde{q}=[11,12,10]$ and $\theta = \textbf{D}(2, 50, 3h)$, then we see that $q_{(A,\theta)}= [4, 2832500, 1336, -121200, -144, 6732]$ whose Eisenstein reduced form is $f$, and so $q_{(A,\theta)}\sim f$, as expected.
\end{ex}

\begin{remark}
Let $(A,\theta)$ be as in Example \ref{ex: kaplansky_example}, and so $(A,\theta)=(J_C,\theta_C)$, for some curve $C$ of genus $2$, and $q_C \sim [4,12,28,0,2,2]$. It seems that $C$ is a “special” CM point in the sense that the discriminant of the elliptic curves components of $J_C$ are relatively small. Also, since $q_C$ primitively represents $4$ at two points, namely $q_C(\pm1,0,0)=4$, it follows from Proposition 35 of \cite{ESCII} that $C$ has one elliptic involution, i.e., $\Aut(C)$ is not trivial \footnote{Indeed, one can see that $\Aut(C)=C_2\times C_2$ by Theorem 29 of \cite{cas}, where $C_2$ denotes the cyclic group of order $2$.}, and hence, $C$ is also special in this sense. Therefore, it would be highly interesting to find a way to show that this special CM point is an element of $\mathcal{H}_{\infty}$ to conclude that Kaplansky's conjecture holds.
\end{remark}

\subsection*{Elliptic Subcovers and Automorphisms}

Let $C/K$ be a curve of genus 2 over $K$. Recall from \cite{kani1994elliptic} that an \textit{elliptic subcover} is a finite morphism $\phi: C\rightarrow E$ to an elliptic curve $E/K$ which does not factor over a non-trivial isogeny (i.e., minimal covering) of $E$. Its degree is the degree of the morphism.

To present the relation  of elliptic subcovers and the refined Humbert invariant,  let us recall from
Theorem 4.5 of Kani \cite{kani1994elliptic} that for a curve $C/K$ of genus 2 we have
\begin{equation}
    \label{eq: elliptic_subcover_fact}
    C/K \text{ has an elliptic subcover of degree } N \;\Leftrightarrow\; q_C \rightarrow N^2.
\end{equation}

Kani (see Theorem 29 of \cite{cas} and cf.\ Table 1 of \cite{Automorphism}) showed that the number of solutions of the equation $q_C=4$ determines the automorphism group $\Aut(C)$ of a genus 2 curve $C$. Moreover, we have a list of geometric forms $q_C$ encoding the information $\Aut(C)$; cf.\ the main results of \cite{Automorphism} and Tables 5 and 6. 

For example, there is a characterization of genus 2 curves whose automorphism group contains (up to isomorphism) the dihedral group $D_6$ of order $12$ as follows:
\begin{equation}
\label{eq: aut_group_list_D6}
   D_6 \leqslant \Aut(C) \  \Leftrightarrow \  q_C\sim [4, 4, 4] \text{ or } [4,4,c,4,4,4] \text{ or } [4, 4, c, 0, 0, -4], 
\end{equation}
where $\ c \equiv 0, 1 \Mod{4}$ and $c>1$ (see Theorem 1.2 of \cite{Automorphism}).

By combining these results regarding a genus 2 curve $C$, we can argue interesting properties about curves having an elliptic subcover with prescribed degree and specific automorphism groups in terms or geometric forms, and then apply our algorithm to find a representative of the curve. For example, the set of genus $2$ curves whose automorphism group is $D_6$ with having an elliptic subcover of degree $3$ are determined in Theorem 1 of \cite{Shaska_subcovers}. As in Example 6.3 of \cite{Automorphism}, we can characterize the same curves by the refined Humbert invariant as
$$
[4,4,5,4,4,4], [4,4,9,4,4,4], [4,4,5,0,0-4] \ \text{ and } \ [4,4,9,0,0-4],
$$
then we apply Algorithm \ref{m: simplified_algorithm} to find the representatives of the curves. Since the discriminants of these forms are very small, illustrating of applying our algorithms is not quite interesting. So, let us just write the outputs of Algorithm \ref{m: simplified_algorithm} by SageMath here:

\begin{table}[h]
\centering
\small 
\setlength{\tabcolsep}{4pt} 
\caption{Algorithm \ref{m: simplified_algorithm} Results. }
\label{tab:alg_outputs}
\begin{tabular}{l c c c c}
\hline
 $f=[a,b,c,r,s,t]$ & $D$ & $\tilde{q}$ & $p$ & $s=(n,m,k)$ \\
\hline
$[4, 4, 5, 4, 4, 4]$ & $-11$ & $[3, 1, 1]$ & $3$ & $(2, 122, 9)$ \\
$[4, 4, 9, 4, 4, 4]$ & $-23$ & $[3, 1, 2]$ & $3$ & $(2, 122, 9)$ \\
$[4, 4, 5, 0, 0, -4]$ & $-15$ & $[23, 13, 2]$ & $23$ & $(2, 104, 3)$ \\
$[4, 4, 9, 0, 0, -4]$ & $-27$ & $[39, 21, 3]$ & $13$ & $(2, 7, 1)$ \\
\hline
\end{tabular}
\end{table}

{\footnotesize
\noindent \textbf{Acknowledgments.} I thank to my PhD supervisor, Prof.\ Ernst Kani, for his guidance and support since the core of this work originates from my doctoral studies under his supervision.
}

\bibliographystyle{plainurl}
\bibliography{pp}

@article{harun_1,
author = {K\i{}r, H.},
title = {The classification of the refined Humbert invariant for curves of genus 2},
journal = {International Journal of Number Theory},
volume = {21},
number = {06},
pages = {1247-1279},
year = {2025},
doi = {10.1142/S1793042125500654},

URL = { 
    
        https://doi.org/10.1142/S1793042125500654
    
    

},


}

@INPROCEEDINGS{BW_hard_problems,
  author={Wesolowski, B.},
  booktitle={2021 IEEE 62nd Annual Symposium on Foundations of Computer Science (FOCS)}, 
  title={The supersingular isogeny path and endomorphism ring problems are equivalent}, 
  year={(2022)},
  volume={},
  number={},
  pages={1100-1111},
  doi={10.1109/FOCS52979.2021.00109}}

@book {Duncanbinaryforms,
    AUTHOR = {Buell, D. A.},
     TITLE = {Binary Quadratic Forms. Classical Theory and Modern Computations},
      
 PUBLISHER = {Springer-Verlag, New York},
      YEAR = {(1989)},
    PAGES = {x+247},
      OPTISBN = {0-387-97037-1},
   MRCLASS = {11E16 (11-02)},
  MRNUMBER = {1012948},
MRREVIEWER = {A. G. Earnest},
       OPTDOI = {10.1007/978-1-4612-4542-1},
       URL = {https://doi.org/10.1007/978-1-4612-4542-1},
}

@article {brandt1951zahlentheorie,
    AUTHOR = {Brandt, H.},
     TITLE = {Zur {Z}ahlentheorie der tern\"{a}ren quadratischen {F}ormen},
   JOURNAL = {Math. Ann.},
  FJOURNAL = {Mathematische Annalen},
    VOLUME = {124},
      YEAR = {(1952)},
     PAGES = {334--342},
      OPTISSN = {0025-5831,1432-1807},
   MRCLASS = {10.0X},
  MRNUMBER = {51269},
MRREVIEWER = {R.\ Hull},
       OPTDOI = {10.1007/BF01343574},
       URL = {https://doi.org/10.1007/BF01343574},
}

@article {brandt1952mass,
    AUTHOR = {Brandt, H.},
     TITLE = {\"{U}ber das {M}ass positiver tern\"{a}rer quadratischer
              {F}ormen},
   JOURNAL = {Math. Nachr.},
  FJOURNAL = {Mathematische Nachrichten},
    VOLUME = {6},
      YEAR = {(1952)},
     PAGES = {315--318},
      OPTISSN = {0025-584X,1522-2616},
   MRCLASS = {10.0X},
  MRNUMBER = {51268},
MRREVIEWER = {R.\ Hull},
       OPTDOI = {10.1002/mana.19520060507},
       URL = {https://doi.org/10.1002/mana.19520060507},
}

@article {kani2011products,
    AUTHOR = {Kani, E.},
     TITLE = {Products of {CM} elliptic curves},
   JOURNAL = {Collect. Math.},
  FJOURNAL = {Collectanea Mathematica},
    VOLUME = {62},
      YEAR = {(2011)},
    NUMBER = {3},
     PAGES = {297--339},
      OPTISSN = {0010-0757},
   MRCLASS = {11G10 (11G15 14H52 14K02 14K22 14L15)},
  MRNUMBER = {2825715},
MRREVIEWER = {Joseph H. Silverman},
       OPTDOI = {10.1007/s13348-010-0029-1},
        URL = {https://doi.org/10.1007/s13348-010-0029-1},
}

@book {vollmer2007binary,
    AUTHOR = {Buchmann, J. and Vollmer, U.},
     TITLE = {Binary quadratic forms},
    SERIES = {Algorithms and Computation in Mathematics},
    VOLUME = {20},
      NOTE = {An algorithmic approach},
 PUBLISHER = {Springer, Berlin},
      YEAR = {(2007)},
     PAGES = {xiv+318},
      ISBN = {978-3-540-46367-2},
   MRCLASS = {11E16 (11-01 11R11 11Y40)},
  MRNUMBER = {2300780},
MRREVIEWER = {Duncan\ A.\ Buell},
}

@book {Gauss,
    AUTHOR = {Gauss, C. F.},
     TITLE = {Disquisitiones arithmeticae},
      NOTE = {Translated and with a preface by Arthur A. Clarke},
 PUBLISHER = {Springer-Verlag, New York},
      YEAR = {1986},
     PAGES = {xx+472},
      ISBN = {0-387-96254-9},
   MRCLASS = {01A75 (01A55)},
  MRNUMBER = {837656},
}

@article {Stevenhagen_2_class_groups,
    AUTHOR = {Bosma, W. and Stevenhagen, P.},
     TITLE = {On the computation of quadratic {$2$}-class groups},
   JOURNAL = {J. Th\'{e}or. Nombres Bordeaux},
  FJOURNAL = {Journal de Th\'{e}orie des Nombres de Bordeaux},
    VOLUME = {8},
      YEAR = {(1996)},
    NUMBER = {2},
     PAGES = {283--313},
      ISSN = {1246-7405,2118-8572},
   MRCLASS = {11R29 (11R11 11Y40)},
  MRNUMBER = {1438471},
MRREVIEWER = {F.\ Diaz y Diaz},
       URL = {http://jtnb.cedram.org/item?id=JTNB_1996__8_2_283_0},
}

@article {kani2014jacobians,
    AUTHOR = {Kani, E.},
     TITLE = {Jacobians isomorphic to a product of two elliptic curves and ternary quadratic forms},
   JOURNAL = {J. Number Theory},
  FJOURNAL = {Journal of Number Theory},
    VOLUME = {139},
      YEAR = {(2014)},
     PAGES = {138--174},
      OPTISSN = {0022-314X,1096-1658},
   MRCLASS = {14H40 (11G10 11G15 11G18 14H30 14H52)},
  MRNUMBER = {3173190},
MRREVIEWER = {Edward\ F.\ Schaefer},
       OPTDOI = {10.1016/j.jnt.2013.12.006},
       URL = {https://doi.org/10.1016/j.jnt.2013.12.006},
}

@article {ESCI,
    AUTHOR = {Kani, E.},
     TITLE = {Elliptic subcovers of a curve of genus 2. {I}. {T}he isogeny defect},
   JOURNAL = {Ann. Math. Qu\'{e}.},
  FJOURNAL = {Annales Math\'{e}matiques du Qu\'{e}bec},
    VOLUME = {43},
      YEAR = {(2019)},
    NUMBER = {2},
     PAGES = {281--303},
      OPTISSN = {2195-4755,2195-4763},
   MRCLASS = {14H30 (14H05 14H25 14H40)},
  MRNUMBER = {3996071},
MRREVIEWER = {Giancarlo\ Urz\'{u}a},
       OPTDOI = {10.1007/s40316-018-0105-6},
       URL = {https://doi.org/10.1007/s40316-018-0105-6},
}

@article {ESCII,
    AUTHOR = {Kani, E.},
     TITLE = {Elliptic subcovers of a curve of genus 2 {II}. {T}he refined
              {H}umbert invariant},
   JOURNAL = {J. Number Theory},
  FJOURNAL = {Journal of Number Theory},
    VOLUME = {193},
      YEAR = {(2018)},
     PAGES = {302--335},
      OPTISSN = {0022-314X,1096-1658},
   MRCLASS = {14H30 (11G30 14H40)},
  MRNUMBER = {3846811},
MRREVIEWER = {Sajad\ Salami},
       OPTDOI = {10.1016/j.jnt.2018.05.011},
       URL = {https://doi.org/10.1016/j.jnt.2018.05.011},
}

@incollection {SubcoversofCurves,
    AUTHOR = {Kani, E.},
     TITLE = {Subcovers of curves and moduli spaces},
 BOOKTITLE = {Geometry at the frontier},
    SERIES = {Contemp. Math.},
    VOLUME = {766},
     PAGES = {229--250},
 PUBLISHER = {Amer. Math. Soc., RI},
      YEAR = {(2021)},
      OPTISBN = {978-1-4704-5327-5},
   MRCLASS = {14G35},
  MRNUMBER = {4248056},
MRREVIEWER = {Nicolae\ Manolache},
       OPTDOI = {10.1090/conm/766/15384},
       URL = {https://doi.org/10.1090/conm/766/15384},
}

@article{PP,
  title={Principal Polarizations on {$E\times E'$} },
  author={Kani, E.},
   year={Preprint, 17 pages},
  
}

@article{refhum,
  title={The refined {Humbert} invariant for
abelian product surfaces with complex
multiplication },
  author={Kani, E.},
   year={Preprint, 23 pages},
  
}

@article{cas,
  title={ Curves of genus 2 on abelian surfaces },
  author={Kani, E.},
   year={Preprint, 37 pages},
  
}

@article {MJ,
    AUTHOR = {Kani, E.},
     TITLE = {The moduli spaces of {J}acobians isomorphic to a product of
              two elliptic curves},
   JOURNAL = {Collect. Math.},
  FJOURNAL = {Collectanea Mathematica},
    VOLUME = {67},
      YEAR = {(2016)},
     PAGES = {21--54},
      OPTISSN = {0010-0757},
   MRCLASS = {14H10 (14H40)},
  MRNUMBER = {3439838},
MRREVIEWER = {Francisco J. Plaza Mart\'{\i}n},
       OPTDOI = {10.1007/s13348-015-0148-9},
       URL = {https://doi.org/10.1007/s13348-015-0148-9},
}

@article{Automorphism,
  title={The Refined {H}umbert Invariant for a Given Automorphism Group of a Genus 2 Curve},
  author={Kır, H.},
   year={to appear, 28 pages},
URL = {https://arxiv.org/pdf/2310.19076},
  
}

@incollection {Shaska_subcovers,
    AUTHOR = {Shaska, T.},
     TITLE = {Genus 2 curves with {$(3,3)$}-split {J}acobian and large
              automorphism group},
 BOOKTITLE = {Algorithmic number theory ({S}ydney, 2002)},
    SERIES = {Lecture Notes in Comput. Sci.},
    VOLUME = {2369},
     PAGES = {205--218},
 PUBLISHER = {Springer, Berlin},
      YEAR = {(2002)},
      ISBN = {3-540-43863-7},
   MRCLASS = {14H37 (14H45)},
  MRNUMBER = {2041085},
MRREVIEWER = {Sadok\ Kallel},
       DOI = {10.1007/3-540-45455-1\_17},
       URL = {https://doi.org/10.1007/3-540-45455-1_17},
}

@book {van2012hilbert,
    AUTHOR = {van der Geer, G.},
     TITLE = {Hilbert modular surfaces},
    SERIES = {\emph{Ergebnisse der Mathematik und ihrer Grenzgebiete (3)}},
    VOLUME = {16},
 PUBLISHER = {Springer-Verlag, Berlin},
      YEAR = {(1988)},
     PAGES = {x+291},
      OPTISBN = {3-540-17601-2},
   MRCLASS = {11F41 (11G10 11G15 14J20)},
  MRNUMBER = {930101},
MRREVIEWER = {O.\ V.\ Shvartsman},
       OPTDOI = {10.1007/978-3-642-61553-5},
       URL = {https://doi.org/10.1007/978-3-642-61553-5},
}

@inproceedings {gelin2019principally,
    AUTHOR = {G\'{e}lin, A. and Howe, E. and Ritzenthaler,
              C.},
     TITLE = {Principally polarized squares of elliptic curves with field of
              moduli equal to {$\mathbb{Q}$}},
 BOOKTITLE = {Proc. 13th {A}lgorithmic {N}umber
              {T}heory {S}ymposium},
    SERIES = {Open Book Ser. 2},
    VOLUME = {},
     PAGES = {257--274},
 PUBLISHER = {},
      YEAR = {(2019)},
      OPTISBN = {978-1-935107-03-3; 978-1-935107-02-6},
   MRCLASS = {11G15 (14H25 14H40)},
  MRNUMBER = {3952016},
MRREVIEWER = {James\ H.\ Stankewicz},
URL = {https://msp.org/obs/2019/2-1/p16.xhtml},
}

@article {kani1994elliptic,
    AUTHOR = {Kani, E.},
     TITLE = {Elliptic curves on abelian surfaces},
   JOURNAL = {Manuscripta Math.},
  FJOURNAL = {Manuscripta Mathematica},
    VOLUME = {84},
      YEAR = {(1994)},
    NUMBER = {2},
     PAGES = {199--223},
      OPTISSN = {0025-2611,1432-1785},
   MRCLASS = {14K10 (14J25 14K20)},
  MRNUMBER = {1285957},
MRREVIEWER = {C.\ A. M. Peters},
       OPTDOI = {10.1007/BF02567454},
       URL = {https://doi.org/10.1007/BF02567454},
}

@book {watson1960integral,
    AUTHOR = {Watson, G. L.},
     TITLE = {Integral quadratic forms},
    OPTSERIES = {Cambridge Tracts in Mathematics and Mathematical Physics},
    OPTVOLUME = {No. 51},
 PUBLISHER = {Cambridge U. Press, Cambridge},
      YEAR = {(1960)},
     PAGES = {xii+143},
   MRCLASS = {10.00},
  MRNUMBER = {118704},
MRREVIEWER = {B.\ W.\ Jones},
}

@book{dicksonsbook,
  title={Studies in Number Theory},
  author={ Dickson, L.},
  year={(1957)},
  publisher={U Chicago Press, Chicago, 1930.
Reprinted by Chelsea Publ. Co., New York}
}

@book{smith,
  title={On the orders and genera of ternary quadratic forms \emph{{(1867)}}},
  author={Smith, J. H. S.},
  year={(1894)},
  publisher={In: Collect. Math. Papers vol. I, Oxford, pp. 455\textsc{\textendash}509}
}

@book{jones1950arithmetic,
  title={The Arithmetic theory of Quadratic Forms},
  author={Jones, B.},
  year={(1950)},
  publisher={Carus Math. Monogr., Wiley, New York},
  URL ={https://www.jstor.org/stable/10.4169/j.ctt5hh98f}
}

@book {mumford1970abelian,
    AUTHOR = {Mumford, D.},
     TITLE = {Abelian varieties},
    OPTSERIES = {Tata Institute of Fundamental Research Studies in Mathematics},
    OPTVOLUME = {5},
 PUBLISHER = {2nd edn. Oxford University Press, London},
      YEAR = {(1970)},
     PAGES = {viii+242},
   MRCLASS = {14.51},
  MRNUMBER = {282985},
MRREVIEWER = {James\ Milne},
}

@incollection {milne1986jacobian,
    AUTHOR = {Milne, J. S.},
     TITLE = {Jacobian varieties},
 BOOKTITLE = {Arithmetic geometry ({S}torrs, {C}onn., 1984)},
     PAGES = {167--212},
 PUBLISHER = {Springer, New York},
      YEAR = {(1986)},
      ISBN = {0-387-96311-1},
   MRCLASS = {14H40},
  MRNUMBER = {861976},
}

@book {davidcox,
    AUTHOR = {Cox, D. A.},
     TITLE = {Primes of the form {$x^2 + ny^2$}},
    SERIES = {A Wiley-Interscience Publ.},
      OPTNOTE = {Fermat, class field theory and complex multiplication},
 PUBLISHER = {John Wiley \& Sons, Inc., New York},
      YEAR = {(1989)},
     PAGES = {xiv+351},
      OPTISBN = {0-471-50654-0; 0-471-19079-9},
   MRCLASS = {11A41 (11F11 11R11 11R16 11R18 11R37 11Y11)},
  MRNUMBER = {1028322},
MRREVIEWER = {Andrew\ Bremner},
URL = {https://onlinelibrary.wiley.com/doi/book/10.1002/9781118032756},
}

@article {kaplansky,
    AUTHOR = {Kaplansky, I.},
     TITLE = {Ternary positive quadratic forms that represent all odd
              positive integers},
   JOURNAL = {Acta Arith.},
  FJOURNAL = {Acta Arithmetica},
    VOLUME = {70},
      YEAR = {(1995)},
    NUMBER = {3},
     PAGES = {209--214},
      OPTISSN = {0065-1036,1730-6264},
   MRCLASS = {11E25 (11D85)},
  MRNUMBER = {1322563},
MRREVIEWER = {Jorge\ F.\ Morales},
      OPTDOI = {10.4064/aa-70-3-209-214},
       URL = {https://doi.org/10.4064/aa-70-3-209-214},
}

@article {Jeremy_Rouse,
    AUTHOR = {Rouse, J.},
     TITLE = {Quadratic forms representing all odd positive integers},
   JOURNAL = {Amer. J. Math.},
  FJOURNAL = {American Journal of Mathematics},
    VOLUME = {136},
      YEAR = {(2014)},
    NUMBER = {6},
     PAGES = {1693--1745},
      OPTISSN = {0002-9327,1080-6377},
   MRCLASS = {11E20 (11D85 11E12 11E25 11H55)},
  MRNUMBER = {3282985},
MRREVIEWER = {Laurent\ Habsieger},
       OPTDOI = {10.1353/ajm.2014.0041},
       URL = {https://doi.org/10.1353/ajm.2014.0041},
}

@phdthesis{kir2024curvesPHD,
  author       = {Kir, H},
  title        = {Curves of genus 2 and quadratic forms},
  school       = {Queen’s University},
  year         = {2024},
  month        = {09},
  note         = {Doctoral thesis, Department of Mathematics, Statistics},
  url          = {https://qspace.library.queensu.ca/items/119efd2b-59b4-4e19-9ff0-9eea3914560e}
}

@misc{GRV,
      title={An arithmetic intersection for squares of elliptic curves with complex multiplication}, 
      author={E. L. García and C. Ritzenthaler and F. R. Villegas},
      year={(2024)},
      eprint={2412.08738},
      archivePrefix={arXiv},
      primaryClass={math.NT},
      url={https://arxiv.org/abs/2412.08738}, 
}

@misc{Eda_degree_analysis,
      author = {E. Kırımlı and G. Korpal},
      title = {Refined Humbert Invariants in Supersingular Isogeny Degree Analysis},
      howpublished = {Cryptology {ePrint} Archive, Paper 2025/1605},
      year = {2025},
      url = {https://eprint.iacr.org/2025/1605}
}

@article{Humbert,
author = {G. Humbert},
journal = {Journal de Mathématiques Pures et Appliquées},
pages = {297-401},
title = {Sur les fonctions abéliennes singulières {I} (premier Mémoire)},
volume = {5},
year = {(1899)},
}

@Manual{SageMath,
  key          = {SageMath},
  author       = {The Sage Developers},
  title        = {SageMath, the Sage Mathematics Software System (Version 10.6)},
  note         = {\url{https://www.sagemath.org}},
  year         = {(2025)},
}
\end{document}